\DeclareMathOperator{\Bs}{Bs}
\DeclareMathOperator{\Supp}{Supp}
 \numberwithin{equation}{subsection}
 \numberwithin{footnote}{subsection}
 \newtheorem{cor}[subsection]{Corollary}
 \newtheorem{lem}[subsection]{Lemma}
 \newtheorem{thm}[subsection]{Theorem}
 \newtheorem{quest}[subsection]{Question}
\theoremstyle{upright}
 \newtheorem{rem}[subsection]{Remark}
 \newcommand{\N}{\mathbb N}
 \newcommand{\PP}{\mathbb P}
 \newcommand{\Q}{\mathbb Q}
 \newcommand{\R}{\mathbb R}
  \newcommand{\C}{\mathbb C}
 \newcommand{\bir}{\dashrightarrow}
 \newcommand{\rddown}[1]{\left\lfloor{#1}\right\rfloor} 
  \newcommand{\Nklt}{{\rm{Nklt}}}
\title{\large O\MakeLowercase{n connectedness of non-klt loci of singularities of pairs}}
\thanks{2010 MSC:
14J17, 
14J32,  
14J45, 
14E30. 
}
\author{\large C\MakeLowercase{aucher} B\MakeLowercase{irkar}}
\date{\today}
\begin{document}
\maketitle

\begin{abstract}
We study the non-klt locus of singularities of pairs. We show that given a pair $(X,B)$ and a projective 
morphism $X\to Z$ with connected fibres such that  
$-(K_X+B)$ is nef over $Z$, the non-klt locus of $(X,B)$ has at most two connected components near 
each fibre of $X\to Z$. This was conjectured by Hacon and Han. 

In a different direction we answer a question of Mark Gross on connectedness of the non-klt loci 
of certain pairs. This is motivated by constructions in Mirror Symmetry.

\end{abstract}



\section{\bf Introduction}

We work over an algebraically closed field of characteristic zero.

Pairs and their singularities play a fundamental role in higher dimensional algebraic geometry. 
Let's consider the simplest kind of pair, that is, a projective 
log smooth pair $(X,B)$ where $X$ is a smooth projective variety and 
$B=\sum b_i B_i$ is a divisor with simple normal crossing singularities and real coefficients $b_i\ge 0$; 
here $B_i$ are distinct prime divisors. 
In this setting, the \emph{non-klt locus} $\Nklt(X,B)$ of $(X,B)$ is the union of the $B_i$ with $b_i\ge 1$. 
In general, $\Nklt(X,B)$ can have any number of connected components as a topological space with 
the Zariski topology. But in special situations the non-klt locus exhibits interesting behaviour. 
For example, Shokurov [\ref{Shokurov-log-flips}, Connectedness Lemma 5.7] proved in the early 1990's that if $X$ is a surface and 
$-(K_X+B)$ is ample, then $\Nklt(X,B)$ is connected. 
This was generalised to higher dimensions by Koll\'ar [\ref{Kollar-flip-abundance}, Theorem 17.4], 
which is known as the connectedness lemma or connectedness principle. The same holds if we only assume 
$-(K_X+B)$ to be nef and big, and it also holds in the relative setting when 
$X$ is defined over a base $Z$ in which case connectedness holds near each fibre of $X\to Z$ assuming 
$X\to Z$ has connected fibres.

The connectedness principle plays an important role in higher dimensional algebraic geometry. For example, 
it is used in the proof of existence of flips [\ref{Shokurov-log-flips}][\ref{Shokurov-pl-flips}], in
the proof of inversion of adjunction [\ref{Kollar-flip-abundance}, Theorem 17.6], 
in the proofs of boundedness of complements and boundedness of Fano 
varieties [\ref{B-compl}, Proposition 5.1, Proposition 6.7], in the proof of the Jordan property of birational 
automorphism groups of rationally connected varieties 
[\ref{Prokhorov-Shramov}], in birational rigidity of Fano varieties 
[\ref{Pukhlikov}], in the study of adjunction for fibre spaces [\ref{Shokurov-log-adjunction}], etc.
 
A natural question is what happens if we only assume that $-(K_X+B)$ is nef? In this case, $\Nklt(X,B)$ 
may not be connected. 
The easiest example is to take $X=\PP^1$, $B=B_1+B_2$ where $B_i$ are distinct points. 
It turns out that when connectedness fails this simple example is in a sense the reason.
When $K_X+B\equiv 0$ and the coefficients of $B$ are $\le 1$, Shokurov [\ref{Shokurov-log-flips}, Theorem 6.9] 
in dimension two and Koll\'ar and Kov\'acs [\ref{Kollar-Kovasc}][\ref{Kollar-singMMP}, Theorem 4.40] in higher dimension 
showed that if connectedness fails, then   
$X$ is birational to a (possibly singular) model $(X',B')$ admitting a contraction $X'\to Y'$ where the general fibres 
are $\PP^1$ and $\rddown{B'}$ has exactly two disjoint components both horizontal over $Y'$ 
($X'$ is obtained by running a minimal model program on $-\rddown{B}$). 

Hacon and Han [\ref{Hacon-Han}] investigated the above phenomenon more generally. They showed that if $\dim X\le 4$ and 
if $-(K_X+B)$ is nef, then $\Nklt(X,B)$ has at most two connected components. They conjectured that 
this holds in every dimension and then showed that it follows from the termination of klt flips conjecture.
One of our main results is to verify their conjecture without assuming termination of klt flips 
(see Theorem \ref{t-anti-nef-pairs-1}). 

Another main result of this paper (see Theorem \ref{t-mirror-symmetry} and Corollaries \ref{cor-mirror-symmetry-connectedness}, \ref{cor-mirror-symmetry}), 
in a different but somewhat related direction,  
is an answer to the following question of Mark Gross. The question   
is motivated by constructions in Mirror Symmetry. In fact this work started in response to this 
question which was communicated privately.

In [\ref{gross-s-II}], Gross and Siebert construct mirror families to pairs $(X,B)$ where $X$
is a non-singular projective variety and $B$ is a simple normal crossings
divisor satisfying certain hypotheses, see [\ref{gross-s-II}, Assumptions 1.1]. It is not
clear how strong these hypotheses are, and thus it is important to know
which pairs $(X,B)$ might satisfy them. These hypotheses include  
conditions (ii) and (iii) of Question \ref{q-gross} as well as the desired connectedness in the 
question. The conditions (ii) and (iii) are essential for mirror
symmetry: mirrors are not expected to exist without similar assumptions.
On the other hand, the connectedness condition would appear to be a
technical one. Thus Corollary 1.5 below gives a useful criterion for checking
when this connectedness assumption may hold, and in particular when the
mirror to the pair $(X,B)$ may exist. 

\begin{quest}\label{q-gross}
Consider the following setup:
\begin{itemize}
\item[(i)] $(X,B)$ is a projective log smooth pair where $B=\sum B_i$ is reduced,

\item[(ii)] $K_X+B\equiv \sum a_iB_i$ with $a_i\ge 0$ real numbers (we say $B_i$ is good if $a_i=0$),

\item[(iii)]  $(X,C)$ has a zero-dimensional stratum $x$ where $C$ is the sum of the good divisors, 

\item[(iv)] for each stratum $V$ of $(X,C)$, define $K_V+C_V=(K_X+C)|_V$ by adjunction. 
\end{itemize}
Then under what conditions is $C_V$ connected for every stratum $V$ of dimension $\ge 2$? 
\end{quest}
Here by a stratum of $(X,C)$ we mean $X$ itself or an irreducible component of the intersection of 
any subset of the irreducible components of $C$.

Gross pointed out that if $(X,B)$ has a good log minimal model which is log Calabi--Yau, that is, a 
log minimal model $(X',B')$ with $K_{X'}+B'\sim_\Q 0$, then the connectedness in the question  
holds by [\ref{Kollar-singMMP}, Theorem 4.40] mentioned above. 
If $(X,B)$ has Kodaira dimension zero, then standard conjectures of the minimal 
model program, including the abundance conjecture, imply that $(X,B)$ has such a log minimal model 
but the current technology of the minimal model program is not enough to guarantee existence of good 
log minimal models in general in dimension $\ge 4$. 
The problem is then to find other reasonable assumptions, instead of Kodaira dimension zero, 
so that a good log minimal model exists or at least that the desired connectedness holds. 

It turns out that in the setting of the question it is enough to assume the following property 
which is of a numerical nature so probably easy to check in explicit examples. Let 
$\phi\colon Y\to X$ be the blowup of $X$ at the zero-dimensional stratum $x\in X$ and let $E$ be the exceptional divisor. 
Assume that $\phi^*(K_X+B)-tE$ is not pseudo-effective for any real number $t>0$ (a divisor is 
pseudo-effective if it is numerically the limit of effective divisors). 
Then $C_V$ is connected for every stratum $V$ of dimension $\ge 2$. See Corollary 
\ref{cor-mirror-symmetry-connectedness} 
for a statement that works in a much more general setting. 

The non-pseudo-effectivity assumption of the previous paragraph is not as restrictive as it may seem. In fact, it is 
conjecturally equivalent to $(X,B)$ having Kodaira dimension zero. Indeed, assuming standard conjectures of the 
minimal model program, $(X,B)$ has a log minimal model $(X',B')$ where 
$K_{X'}+B'$ is semi-ample, that is, $|m(K_{X'}+B')|$ is base point free for some sufficiently 
divisible $m\in \N$. Thus there is a contraction $g\colon X'\to Z'$ such that 
$K_{X'}+B'\sim_\Q g^*H$ where $H$ is an ample $\Q$-divisor. Now $X\bir X'$ is an isomorphism 
near $x$ because $x$ does not belong to $\Supp \sum a_iB_i$. If $\dim Z'>0$, then we can find 
$0\le P'\sim_\Q K_{X'}+B'$ so that $x'\in \Supp P'$ where $x'$ is the image of $x$. 
This gives $0\le P\sim_\Q K_X+B$ so that $x\in \Supp P$, so in this case 
$$
\phi^*(K_X+B)-tE\sim_\Q \phi^*P-tE
$$ 
is pseudo-effective for some $t>0$, a contradiction. Therefore, $\dim Z'=0$ which exactly means 
that $(X,B)$ has Kodaira dimension zero. Conversely,
assume $(X,B)$ has Kodaira dimension zero. Then $\phi^*(K_X+B)-tE$ is not pseudo-effective 
for any $t>0$ otherwise one can check that $\phi'^*(K_{X'}+B')-tE'$ is pseudo-effective for some $t>0$ where 
$\phi'$ is the blowup of $X'$ at $x'$ and $E'$ is the exceptional divisor, which is not 
possible as $\phi'^*(K_{X'}+B')\sim_\Q 0$.

Since conditions similar to the above non-pseudo-effectivity condition appear again in this text 
we make a definition to ease notation. 
Given a projective variety $X$, a pseudo-effective $\R$-Cartier $\R$-divisor $L$ on $X$, and a prime divisor 
$S$ over $X$ (that is on birational models of $X$) we define the threshold
$$
\tau_S(L)=\sup\{t\in \R^{\ge 0}\mid \phi^*L-tS ~~\mbox{is pseudo-effective}\}
$$
where $\phi\colon W\to X$ is any resolution on which $S$ is a divisor. This is independent of the 
choice of the resolution, see Lemma \ref{l-tau-independent-of-phi}. A relative version 
of the threshold can similarly be defined when $X$ is projective over a base $Z$, in which 
case we denote it by $\tau_S(L/Z)$.
\\

{\textbf{{Non-klt loci of anti-nef pairs.}}}
Our first main result concerns the non-klt loci of pairs $(X,B)$ with $-(K_X+B)$ nef over some base. 

\begin{thm}\label{t-anti-nef-pairs-1}
Let $(X,B)$ be a pair and $f\colon X\to Z$ be a contraction. Assume $-(K_X+B)$ is nef over $Z$. 
Assume that the fibre of $\Nklt(X,B)\to Z$ over some point $z\in Z$ is not connected. Then we have:
\begin{enumerate}
\item $\Nklt(X,B)\to Z$ is surjective and its fibre over $z$ has exactly two connected components,

\item the pair $(X,B)$ is lc over $z$ and after base change to an \'etale neighbourhood of $z$, 
there exist a resolution $\phi\colon X'\to X$ and a contraction $X'\to Y'/Z$ such that if 
$$
K_{X'}+B':=\phi^*(K_X+B)
$$ 
and if $F'$ is a general fibre of $X'\to Y'$, 
then $(F',B'|_{F'})$ is isomorphic to $(\PP^1,p_1+p_2)$ for distinct points $p_1,p_2$.

Moreover, $\rddown{B'}$ has two disjoint irreducible components $S',T'$, both horizontal over $Y'$, and 
the images of $S',T'$ on $X$ are the two connected components of $\Nklt(X,B)$.  
\end{enumerate}
\end{thm}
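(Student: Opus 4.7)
My plan is to reduce to a $\Q$-factorial dlt crepant model and then force the structure via a minimal model program that lands on a Mori fibre space whose generic fibre is $(\PP^1, p_1+p_2)$, with one marked point from each of the two non-klt clusters.

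First I would shrink $Z$ to a (possibly étale) neighbourhood of $z$ and take a $\Q$-factorial dlt modification $\phi\colon X'\to X$ with $K_{X'}+B'=\phi^*(K_X+B)$; because $\phi$ is crepant, $\phi(\Nklt(X',B'))=\Nklt(X,B)$, so the fibre over $z$ of the non-klt locus upstairs is also disconnected. After possibly further shrinking $Z$, I can decompose $\lfloor B'\rfloor = S'+T'+R'$, where $S'$ and $T'$ are the unions of the components of $\lfloor B'\rfloor$ that dominate (or lie over) the two disjoint clusters of $\Nklt(X,B)$ near $z$, and $R'$ is the rest; since $\phi$-exceptional divisors contract to $\Nklt(X,B)$, showing that $S'$ and $T'$ remain disjoint and horizontal over some base $Y'$ is enough to read off the claimed structure downstairs.

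Next I would use that $-(K_{X'}+B')$ is nef over $Z$ while Kollár's connectedness lemma nonetheless fails: for small $\epsilon>0$ the auxiliary divisor $-(K_{X'}+B'-\epsilon S') = -(K_{X'}+B')+\epsilon S'$ is nef plus something effective but supported on one cluster, so any curve bridging $S'$ to $T'$ has negative intersection with $K_{X'}+B'-\epsilon S'$. Running a $(K_{X'}+B'-\epsilon S')$-MMP over $Z$ with suitable scaling should therefore contract exactly such bridging loci; throughout the MMP the pair stays dlt, the divisor $-(K+B)$ remains nef over $Z$, and $S'$ and $T'$ stay disjoint (an extremal ray meeting both would give a curve on which $-(K+B)+\epsilon S'$ is non-negative yet $S'$ is strictly positive, violating the choice of $\epsilon$ combined with $-(K+B)$ being nef). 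The MMP must then terminate with a Mori fibre space $X''\to Y''/Z$; on the generic fibre, $-(K+B)$ is nef while both $S''$ and $T''$ meet the fibre non-trivially, forcing the fibre to be a conic with reduced boundary of degree exactly $2$, i.e.\ $(\PP^1, p_1+p_2)$. This immediately forces exactly two connected components (hence (1) together with surjectivity, which follows because both clusters survive on the generic fibre of $X''\to Y''\to Z$), forces $(X,B)$ to be lc, and, by undoing the MMP birationally via a common resolution, gives the desired $X'\to Y'$ over $Z$ with $S',T'$ horizontal.

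The main obstacle is termination of this MMP, since termination of klt flips in general is not known. My approach would be to arrange that the MMP restricts to an MMP on the (lower-dimensional) components of $\lfloor B'\rfloor$ via adjunction where special termination applies, while away from the non-klt locus the MMP is effectively run on a $\Q$-factorial dlt pair with big boundary over $Z$, so that termination follows from the standard results of \cite{B-compl} and the techniques Birkar uses to run terminating MMPs without invoking the full termination conjecture. A secondary subtlety is that the decomposition $S'+T'$ is only defined after shrinking $Z$, so one must check that the MMP can be performed formally or étale-locally over $z$ and that the contraction obtained algebraises, which is why the étale base change in the statement is needed.
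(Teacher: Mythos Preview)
Your overall strategy---pass to a crepant dlt model and run an MMP to land on a Mori fibre space with generic fibre $(\PP^1,p_1+p_2)$---is the same as the paper's, but there is a genuine gap at precisely the point you flag: termination. The pair $(X',B'-\epsilon S')$ is only dlt, since every component of $T'$ still appears with coefficient $1$, and your proposed fix does not work: special termination only controls flipping loci that meet $\lfloor B'-\epsilon S'\rfloor$, while away from there nothing forces $B'-\epsilon S'$ to be big over $Z$ (indeed $-(K_{X'}+B')$ is merely nef, not big, and $B'$ need not be big over $Z$), so BCHM-type results do not apply. A closely related problem is your claim that $-(K+B)$ stays nef over $Z$ throughout the MMP: the MMP is run with respect to $K+B-\epsilon S'$, and there is no mechanism preserving nefness of $-(K+B)$ after the first step, which undermines the disjointness and horizontality arguments you sketch for $S'$ and $T'$.

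The device the paper uses to repair both issues at once is to pass to generalised pairs. One absorbs $L:=-(K_X+B)$ into the nef part, replacing $(X,B)$ by the generalised pair $(X,B+M)$ with nef part $M'=\phi^*L$; this leaves the non-klt locus unchanged and forces $K_X+B+M\equiv 0/Z$, so the crepant condition is automatically preserved by every step of any MMP. After a $\Q$-factorial generalised dlt model one writes $B=\Delta+G$ with $G\ge 0$ supported in $\lfloor\Delta\rfloor$ and runs the MMP on $K_X+\Delta-t\lfloor\Delta\rfloor+M$ over $Z$: the point is that $(X,\Delta-t\lfloor\Delta\rfloor+M)$ is generalised \emph{klt} (every coefficient-one component has been lowered, not only those in one cluster), so the MMP terminates by [\ref{BZh}, Lemma~4.4]. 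Since $K+B+M\equiv 0$, each flipping or divisorial base sees $-(K+B+M)$ as nef and big, and case~(1) of Theorem~\ref{t-anti-nef-pairs-2-generalised} keeps the two clusters disjoint throughout. Note finally that the paper proves surjectivity of $\Nklt(X,B)\to Z$ \emph{first}, as a separate lemma (case~(2) of Theorem~\ref{t-anti-nef-pairs-2-generalised}); in your argument horizontality of $S',T'$ over $Y''$ is asserted before surjectivity is available, and the assertion that both $S''$ and $T''$ meet the generic fibre of $X''\to Y''$ is not justified by the MMP you run.
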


As mentioned above, the theorem implies [\ref{Hacon-Han}, Conjecture 1.1]. After completion of this work we learnt 
that Filipazzi and Svaldi [\ref{FS}, Theorem 1.1] have also proved this result using different arguments. We also learnt from 
Shokurov that similar results appeared in his unpublished work.

Here is an example of $(X,B)$ as in the theorem on which the non-klt locus has two components, 
one zero-dimensional and the other one-dimensional. Let $X=\PP^2$, $B_1$ be a line, $x$ 
a closed point not contained in $B_1$, and $B_2,\dots,B_5$ be distinct lines passing through 
$x$. Letting 
$$
B=B_1+\frac{1}{2}(B_2+\dots+B_5)
$$ 
we can see that 
$K_X+B\equiv 0$ and $(X,B)$ is lc and $\Nklt(X,B)$ has two components, 
one is $x$ and the other is $B_1$. In this example, $X'=\mathbb{F}_1\to X$ is the blowup at $x$, 
$X'\to Y'=\PP^1$ is the corresponding $\PP^1$-bundle, $S'$ is the birational transform of $B_1$, and 
$T'$ is the exceptional divisor of $X'\to X$.

The theorem was already known (in this or other forms) 
\begin{itemize}
\item in dimension two  [\ref{Shokurov-log-flips}, Theorem 6.9][\ref{Prokhorov}, \S 3.3], 

\item in case $(X,B)$ is dlt and $K_X+B\equiv 0$ 
assuming termination of klt flips [\ref{Fujino}, Proposition 2.1] and without assuming this termination 
[\ref{Kollar-Kovasc}][\ref{Kollar-singMMP}, Proposition 4.37], and 

\item in any dimension assuming termination of klt flips and in dimension $\le 4$ without assuming 
termination [\ref{Hacon-Han}, Theorem 1.2]. 
\end{itemize}

Also if $K_X+B\sim_\R 0$ and if $(X,B)$ is not lc, 
it was proved in [\ref{ambro-injectivity}, Theorem 6.3] that the non-klt locus, even the non-lc locus, 
of $(X,B)$ is connected. 

In the opposite direction we have the following result.

\begin{thm}\label{t-anti-nef-pairs-2}
Let $(X,B)$ be a pair and $f\colon X\to Z$ be a contraction. Assume $-(K_X+B)$ is nef over $Z$. 
Then the fibres of $\Nklt(X,B)\to Z$ are  connected if any of the following conditions holds:  
\begin{enumerate}
\item  $-(K_X+B)$ is big over $Z$, or 

\item $\Nklt(X,B)\to Z$ is not surjective, or 

\item  $\tau_S(-(K_X+B)/Z)>0$ for every non-klt place $S$ of $(X,B)$.
\end{enumerate}
\end{thm}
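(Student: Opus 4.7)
The plan is to reduce each of the three parts to Theorem \ref{t-anti-nef-pairs-1} (together with the classical connectedness principle). Part (1) is exactly the Koll\'ar--Shokurov connectedness lemma in the relative nef-and-big case, which is recalled in the introduction. Part (2) is the contrapositive of Theorem \ref{t-anti-nef-pairs-1}(1): a disconnected fibre of $\Nklt(X,B)\to Z$ forces $\Nklt(X,B)\to Z$ to be surjective, so if surjectivity fails then every non-empty fibre is connected (and empty fibres trivially are).

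The real work goes into part (3). I would argue by contradiction: suppose the fibre of $\Nklt(X,B)\to Z$ over some $z\in Z$ is disconnected. Theorem \ref{t-anti-nef-pairs-1}(2) then supplies, after \'etale base change near $z$, a resolution $\phi\colon X'\to X$ and a contraction $g\colon X'\to Y'/Z$ whose general fibre $F'$ is a $\PP^1$ with $(F',B'|_{F'})\cong(\PP^1,p_1+p_2)$, together with a component $S'$ of $\rddown{B'}$ horizontal over $Y'$ meeting $F'$ in a single point. In particular $S'$ is a non-klt place of $(X,B)$. Since $-(K_{X'}+B')=\phi^*(-(K_X+B))$, computing on $F'$ gives
\[
\bigl(\phi^*(-(K_X+B))-tS'\bigr)\cdot F' \;=\; -(K_{F'}+B'|_{F'})\cdot F' \;-\; t\,(S'\cdot F') \;=\; 0 - t \;=\; -t,
\]
which is negative for every $t>0$. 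Since the family $\{F'\}$ covers $X'$ and its members are contracted to points of $Z$, this negative intersection prevents $\phi^*(-(K_X+B))-tS'$ from being pseudo-effective over $Z$. Hence $\tau_{S'}(-(K_X+B)/Z)=0$, contradicting the hypothesis of (3).

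The only genuine obstacle is the bookkeeping around the \'etale base change hidden in Theorem \ref{t-anti-nef-pairs-1}(2): the triple $(X',Y',S')$ lives over an \'etale neighbourhood of $z$, while the hypothesis $\tau_S(-(K_X+B)/Z)>0$ is imposed on the original $X/Z$. I would check that a finite \'etale base change both preserves pseudo-effectivity (by pullback) and reflects it (by pushforward, which recovers a positive multiple of the original class), so that the valuation corresponding to $S'$ descends to a non-klt place on $X$ whose threshold is still zero, completing the contradiction.
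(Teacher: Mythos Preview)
Your argument for parts (1) and (3) is essentially the paper's. Part (1) is the classical connectedness principle, and for (3) the paper does exactly what you propose: invoke Theorem \ref{t-anti-nef-pairs-1} (in its generalised form, Theorem \ref{t-anti-nef-pairs-1-generalised}) to produce the $\PP^1$-fibration and the horizontal component $S'$, then note that $-(K_{X'}+B')\cdot F'=0$ while $S'\cdot F'>0$, so $\phi^*(-(K_X+B))-tS'$ is not pseudo-effective over $Z$ for any $t>0$. The \'etale bookkeeping you worry about is handled in the paper by a one-line remark (Remark 3.2) that all three hypotheses are preserved under \'etale base change; your proposed check via pullback/pushforward is how one verifies this for condition (3).

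The one real difference is your treatment of part (2). You derive it as the contrapositive of Theorem \ref{t-anti-nef-pairs-1}(1), which is formally valid once that theorem is available. The paper, however, does \emph{not} argue this way: it proves (2) directly and independently, as Lemma \ref{l-anti-nef-pairs-2-generalised-(2)}, via a seven-step argument that runs an MMP on $K_X+\Delta-t\rddown{\Delta}+M$ and exploits verticality of the non-klt locus. The reason is that in the paper's logical order the dependence runs the other way: Step 1 of the proof of Theorem \ref{t-anti-nef-pairs-1-generalised} \emph{uses} part (2) to deduce surjectivity. So your shortcut is a correct proof of Theorem \ref{t-anti-nef-pairs-2} as stated, but it hides the fact that the real content of (2) is in Lemma \ref{l-anti-nef-pairs-2-generalised-(2)}, and that without an independent proof of (2) your appeal to Theorem \ref{t-anti-nef-pairs-1} would be circular.
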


Here by a non-klt place we mean a prime divisor $S$ over $X$ (that is, on birational models of $X$) 
such that the log discrepancy $a(S,X,B)\le 0$. 
Note that in view of Theorem \ref{t-anti-nef-pairs-1} we can add another case 
in which the theorem holds, that is, the fibre of 
$\Nklt(X,B)\to Z$ over a point $z\in Z$ is connected if $(X,B)$ is not lc over $z$.

Case (1) is essentially the connectedness principle mentioned above. 
Case (3) implies cases (1) and (2) but in practice we first prove cases (1),(2) and then 
derive case (3) from Theorem \ref{t-anti-nef-pairs-1}. There are situations where one can apply (3) 
but not (1) and (2). For example, consider the following: 
assume $-(K_X+B)$ is semi-ample over $Z$ defining a non-birational contraction $X\to T/Z$; 
assume that $\Nklt(X,B)\to T$ is not surjective but $\Nklt(X,B)\to Z$ is surjective; 
then $\tau_S(-(K_X+B)/Z)>0$ for every non-klt place $S$ of $(X,B)$, so we can apply (3).  

See also [\ref{Hacon-Han}, Corollary 1.3] for some special situations in dimension $\le 4$ 
or any dimension assuming termination of klt flips. 

\bigskip

{\textbf{{Non-klt loci for Mirror Symmetry.}}}
The following result is the main step towards answering Question \ref{q-gross} which works in a much more general setting. 

\begin{thm}\label{t-mirror-symmetry}
Assume that 
\begin{enumerate}
\item $(X,B)$ is a projective $\Q$-factorial dlt pair where $B$ is a $\Q$-divisor,

\item $K_X+B$ is pseudo-effective, 

\item $x\in X$ is a zero-dimensional non-klt centre of $(X,B)$,
 
\item $x$ is not contained in the restricted base locus ${\bf B}_-(K_X+B)$, 
 
\item if $\phi\colon Y\to X$ is the blowup at $x$ with exceptional divisor $E$, then 
$\tau_E(K_X+B)=0$, i.e.  $\phi^*(K_X+B)-tE$ is not pseudo-effective for any real number $t>0$.
\end{enumerate}

Then $(X,B)$ has a good log minimal model which is log Calabi--Yau. More precisely, we can run a 
minimal model program on $K_X+B$ ending with a log minimal model $(X',B')$ with $K_{X'}+B'\sim_\Q 0$.
\end{thm}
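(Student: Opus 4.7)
The plan is to construct $(X',B')$ as the output of a $(K_X+B)$-MMP with scaling of a general ample $\Q$-divisor, show that this MMP is an isomorphism on a neighbourhood of $x$ and terminates at a log minimal model, and then argue that on the output the nef divisor $K_{X'}+B'$ is in fact $\Q$-linearly trivial. Existence of the MMP and its termination at a log minimal model follow from standard MMP results for $\Q$-factorial dlt pairs with pseudo-effective log canonical class, after a small perturbation of $\lfloor B\rfloor$ reducing to the klt case treated by BCHM.

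The key geometric input is that every step of the MMP is an isomorphism on a neighbourhood of $x$. If some extremal contraction had a curve $C\ni x$ in its exceptional locus, then $(K_X+B)\cdot C<0$, so for $\epsilon>0$ small and $A$ ample any effective $D\sim_\Q K_X+B+\epsilon A$ would satisfy $D\cdot C<0$, forcing $C\subset\Supp D$ and hence $x\in{\bf B}(K_X+B+\epsilon A)\subset{\bf B}_-(K_X+B)$, contradicting assumption (4). Consequently $x$ lifts to a zero-dimensional non-klt centre $x'$ of every intermediate $\Q$-factorial dlt pair $(X',B')$; comparing the blowup $\phi'\colon Y'\to X'$ at $x'$ with the blowup $\phi\colon Y\to X$ at $x$ through a common resolution shows that the threshold condition persists, i.e.\ $\tau_{E'}(K_{X'}+B')=0$, where $E'$ is the exceptional divisor of $\phi'$.

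The main technical obstacle is to upgrade nefness of $K_{X'}+B'$ on the log minimal model to $\Q$-linear triviality. The preserved threshold first rules out bigness: any effective $D'\sim_\Q m(K_{X'}+B')$ must have $\mult_{x'}D'=0$, since otherwise ${\phi'}^{*}D'-\mult_{x'}(D')\,E'\ge 0$ would witness pseudo-effectivity of ${\phi'}^{*}(K_{X'}+B')-(\mult_{x'}D'/m)\,E'$, contradicting $\tau_{E'}=0$; but if $K_{X'}+B'$ were big, an asymptotic Riemann--Roch estimate combined with Kodaira's lemma would furnish such a $D'$ with positive multiplicity at $x'$. Hence $K_{X'}+B'$ is nef with zero volume. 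To promote zero volume to numerical triviality, I would restrict to the proper one-dimensional strata $C$ of the dlt stratification through $x'$: by iterated adjunction $(K_{X'}+B')|_C=K_C+B_C$ is a nef log pair on $C$ carrying a coefficient-one point at $x'$ coming from the remaining component of $\lfloor B'\rfloor$, and the fact that every effective $D'\sim_\Q m(K_{X'}+B')$ avoids $x'$ forces $D'\cdot C=0$ for each such $C$; threading this constraint through the full stratification and inducting on dimension yields $K_{X'}+B'\equiv 0$. Finally, abundance for numerically trivial dlt pairs with $\Q$-boundary upgrades this to $K_{X'}+B'\sim_\Q 0$.
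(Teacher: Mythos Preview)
There are two genuine gaps.

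First, your claim that the $(K_X+B)$-MMP terminates ``after a small perturbation of $\lfloor B\rfloor$ reducing to the klt case treated by BCHM'' is not justified. BCHM gives termination when $K_X+\Delta$ or $\Delta$ is big, not merely when $K_X+\Delta$ is pseudo-effective; termination of klt flips in the pseudo-effective case is open. The paper does not assume termination: instead it first proves $K_X+B\sim_\Q N_\sigma(K_X+B)\ge 0$, and then observes that any MMP with scaling reaches a model on which $K+B$ is a limit of movables, hence has contracted $N_\sigma$ and become $\sim_\Q 0$. All the work is in establishing $K_X+B\sim_\Q N_\sigma(K_X+B)$.

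Second, your route from ``nef, not big'' to ``numerically trivial'' does not go through. You correctly use $\tau_{E'}=0$ to see that no effective $D'\sim_\Q m(K_{X'}+B')$ passes through $x'$, and this does exclude bigness. But the step ``$D'$ avoids $x'$ forces $D'\cdot C=0$ for each one-dimensional stratum $C$ through $x'$'' is false as stated: $D'$ may meet $C$ away from $x'$. Even granting $(K_{X'}+B')\cdot C=0$ for the finitely many stratum curves through $x'$, this says nothing about curves elsewhere on $X'$, so ``threading through the stratification and inducting on dimension'' does not yield $K_{X'}+B'\equiv 0$ globally. The paper's mechanism is entirely different: one runs an MMP on $K_X+B-t\lfloor B\rfloor$ (which is not pseudo-effective by the $\tau$-hypothesis) to a Mori fibre space $V\to Z$, uses ACC to get $K_V+B_V\equiv 0/Z$, and then applies adjunction for fibrations to pass the whole set of hypotheses (1)--(5) down to a generalised pair on the lower-dimensional base $Z$, proving $\kappa_\sigma=0$ and $K_X+B\sim_\Q N_\sigma$ by induction on dimension. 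Generalised pairs are essential here even for the ordinary-pair statement, because the base inherits a generalised pair structure.
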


Note that in the theorem, $(X,B)$ is dlt and $x\in X$ is its zero-dimensional non-klt centre, 
so $x$ is a smooth point of $X$ and $\Supp B$ is simple normal crossing near $x$. 

Recall that for a $\Q$-divisor $L$ on a normal projective variety, the stable base locus is defined as 
$$
{\bf B}(L):=\bigcap_m \Bs|mL|
$$
where $m$ runs over the natural numbers such that $mL$ is an integral divisor. The restricted 
base locus of $L$ is defined as 
$$
{\bf B}_-(L):=\bigcup_{\epsilon\in\Q^{>0}} {\bf B}(L+ \epsilon A)
$$
where $A$ is any fixed ample divisor (the locus is independent of the choice of $A$).

\begin{cor}\label{cor-mirror-symmetry-connectedness}
Under the assumptions of Theorem \ref{t-mirror-symmetry}, suppose that no non-klt centre of 
$(X,B)$ is contained in ${\bf B}_-(K_X+B)$. For $V=X$ or $V$ a non-klt centre of $(X,B)$, 
define $K_V+B_V=(K_X+B)|_V$. Then $\Nklt(V,B_V)$ is connected when $\dim V\ge 2$.

\end{cor}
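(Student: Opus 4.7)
The plan is to reduce to the log Calabi--Yau case via Theorem \ref{t-mirror-symmetry}, and then invoke the Kollár--Kovács connectedness theorem [\ref{Kollar-singMMP}, Theorem 4.40].

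By Theorem \ref{t-mirror-symmetry} there is a $(K_X+B)$-MMP $\pi\colon X\bir X'$ ending with a log minimal model $(X', B')$ such that $K_{X'}+B'\sim_\Q 0$. The indeterminacy locus of $\pi$, together with the union of the divisors contracted and the flipping loci along the MMP, is contained in ${\bf B}_-(K_X+B)$. Since by hypothesis no non-klt centre of $(X,B)$ lies in ${\bf B}_-(K_X+B)$, the map $\pi$ is a local isomorphism at the generic point of every non-klt centre of $(X,B)$. Consequently each such $V$ has a well-defined birational transform $V'\subset X'$, which is again a non-klt centre of $(X',B')$, and by adjunction $(V', B_{V'})$ is a dlt log Calabi--Yau pair with $K_{V'}+B_{V'}\sim_\Q 0$; the zero-dimensional non-klt centre $x$ transforms to a zero-dimensional non-klt centre $x'$ of $(X',B')$, and whenever $V\ni x$ one has $V'\ni x'$.

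For each $V'$ with $\dim V'\ge 2$, I would apply [\ref{Kollar-singMMP}, Theorem 4.40]: either $\Nklt(V',B_{V'})$ is connected, or $(V',B_{V'})$ is birational to a pair admitting a contraction with general fibre $\PP^1$ whose $\rddown{\cdot}$ consists of exactly two disjoint horizontal components, which form the two connected components of $\Nklt$. The second alternative is incompatible with $(V', B_{V'})$ carrying a zero-dimensional non-klt centre: in a dlt pair of dimension $\ge 2$, such a centre requires at least $\dim V'\ge 2$ coefficient-one components of $B_{V'}$ passing through a common point, whereas only two components of $\rddown{B_{V'}}$ exist (on the birational model) and they are disjoint. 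Hence exhibiting a zero-dimensional non-klt centre on $(V', B_{V'})$ forces $\Nklt(V', B_{V'})$ to be connected; since the birational map $V\bir V'$ is an isomorphism off ${\bf B}_-(K_X+B)$ and induces a bijection between the non-klt centres of $(V,B_V)$ and those of $(V', B_{V'})$ (all of which avoid ${\bf B}_-(K_X+B)$), connectedness descends to $\Nklt(V, B_V)$.

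When $V\ni x$ the desired zero-dimensional non-klt centre of $(V', B_{V'})$ is provided by $x'$ itself. The main obstacle is the case $V\not\ni x$: one must produce a zero-dimensional non-klt centre of $(V, B_V)$ from the ambient dlt stratification of $(X, B)$. I would attempt a downward induction on the codimension of $V$, using that $x$ provides $\dim X$ coefficient-one components of $B$ meeting at $x$, and that the dlt structure forces enough coefficient-one components to pass through every non-klt centre of dimension $\ge 2$ to yield a zero-dimensional substratum. As a fallback, one could try to verify the hypotheses of Theorem \ref{t-mirror-symmetry} directly for $(V, B_V)$ and argue inductively on $\dim V$.
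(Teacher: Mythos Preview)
Your overall strategy matches the paper's: pass to the log Calabi--Yau model $(X',B')$ via Theorem \ref{t-mirror-symmetry}, show $\Nklt(V',B_{V'})$ is connected, and descend. But the case $V\not\ni x$ is not a residual ``obstacle'' to be handled by ad hoc induction or by re-running Theorem \ref{t-mirror-symmetry} on $(V,B_V)$; it is the heart of the matter, and your proposed fallbacks will not work (for instance, the blowup condition (5) and the restricted base locus condition (4) do not obviously restrict to $V$).

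The missing idea is that [\ref{Kollar-singMMP}, Theorem 4.40] says more than the $\PP^1$-bundle alternative you quoted: for a projective dlt pair with $K_{X'}+B'\equiv 0$, all minimal non-klt centres are $\PP^1$-linked, hence birational, hence of the same dimension. Since $x'$ is a zero-dimensional non-klt centre of $(X',B')$, \emph{every} minimal non-klt centre of $(X',B')$ is zero-dimensional. Now any non-klt centre $V'$ of a dlt pair contains a minimal non-klt centre, and that minimal centre is then a zero-dimensional non-klt centre of $(V',B_{V'})$. This supplies exactly the input you needed, uniformly in $V$, and your incompatibility argument (two disjoint components of $\rddown{B_{V'}}$ versus a zero-dimensional stratum) then goes through. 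The paper isolates this as Lemma \ref{l-cy-connected} and recalls the chain-of-components argument from [\ref{Kollar-singMMP}] immediately afterwards.

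One further point: your sentence ``connectedness descends to $\Nklt(V,B_V)$'' needs justification. The paper argues it explicitly: if $S,T$ are disjoint components of $\rddown{B_V}$ whose transforms $S',T'$ meet on $V'$, then any component $W'$ of $S'\cap T'$ is a non-klt centre of $(X',B')$; by the 1--1 correspondence of non-klt centres, $W'$ is the transform of some non-klt centre $W\subset X$, and $W\subset S\cap T$, contradicting $S\cap T=\emptyset$.
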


In the setting of Question \ref{q-gross}, conditions (1)-(4) of the theorem are automatically satisfied: 
indeed, $(X,B)$ is log smooth so we have (1); $K_X+B\equiv \sum a_iB_i$ with $a_i\ge 0$, 
so $K_X+B$ is pseudo-effective which is (2); $x$ is a zero-dimensional stratum of $(X,C)$, 
so it is a non-klt centre of both $(X,C)$ and $(X,B)$, so we have (3); $x$ is contained only in the good components of $B$, 
so $x$ is not contained in $\Supp \sum a_iB_i$, hence $x$ is not contained in 
${\bf B}_-(K_X+B)\subseteq \Supp \sum a_iB_i$ 
so we have (4). 

Applying Theorem \ref{t-mirror-symmetry} and Corollary \ref{cor-mirror-symmetry-connectedness}, 
we will prove the following answer to Question \ref{q-gross}.

\begin{cor}\label{cor-mirror-symmetry}
Assume that 
\begin{enumerate}
\item $(X,B)$ is a projective log smooth pair where $B=\sum B_i$ is reduced,

\item $K_X+B\equiv \sum a_iB_i$ with $a_i\ge 0$ real numbers (we say $B_i$ is good if $a_i=0$),

\item $(X,C)$ has a zero-dimensional stratum $x$ where $C$ is the sum of the good divisors,

\item for each stratum $V$ of $(X,C)$, define $K_V+C_V=(K_X+C)|_V$ by adjunction, 

\item if $\phi\colon Y\to X$ is the blowup at $x$ with exceptional divisor $E$, then 
$\tau_E(K_X+B)=0$. 
\end{enumerate}
Then $(X,B)$ has a good log minimal model which is log Calabi--Yau, 
and $C_V$ is connected for every stratum $V$ of dimension $\ge 2$.
\end{cor}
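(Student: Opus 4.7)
The plan is to combine Theorem~\ref{t-mirror-symmetry}, which supplies the good log minimal model, with Corollary~\ref{cor-mirror-symmetry-connectedness}, which supplies the connectedness on non-klt centres, together with a final reduction from the boundary $B$ down to its good part $C$.

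First I would verify that hypotheses (1)--(5) of Theorem~\ref{t-mirror-symmetry} hold in the present setting. Conditions (1)--(4) are exactly what is checked in the paragraph following Corollary~\ref{cor-mirror-symmetry-connectedness}: log smoothness with $B$ reduced gives (1); $K_X+B\equiv\sum a_iB_i$ with $a_i\ge 0$ gives (2); the zero-dimensional stratum $x$ of $(X,C)$ is a non-klt centre of $(X,B)$ because $C\le B$ are both reduced, giving (3); and $x\notin\Supp\sum a_iB_i$ combined with the numerical invariance of the restricted base locus yields $x\notin{\bf B}_-(K_X+B)\subseteq\Supp\sum a_iB_i$, giving (4). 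Condition (5) is an explicit hypothesis. Applying Theorem~\ref{t-mirror-symmetry} therefore produces a good log minimal model $(X',B')$ of $(X,B)$ with $K_{X'}+B'\sim_\Q 0$, which is the first assertion.

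Second, for the connectedness statement I would apply Corollary~\ref{cor-mirror-symmetry-connectedness}. Its extra hypothesis, that no non-klt centre of $(X,B)$ is contained in ${\bf B}_-(K_X+B)$, I would deduce from the log Calabi-Yau model: since $K_{X'}+B'\sim_\Q 0$ we have ${\bf B}_-(K_{X'}+B')=\emptyset$, so upstairs ${\bf B}_-(K_X+B)$ lies inside the exceptional locus of the birational map $X\bir X'$, and by log smoothness no stratum of $(X,C)$ is swallowed by this exceptional locus. The corollary then gives connectedness of $\Nklt(V,B_V)$ for every non-klt centre $V$ of $(X,B)$ of dimension $\ge 2$, and in particular for every stratum of $(X,C)$ of dimension $\ge 2$, since strata of $(X,C)$ are non-klt centres of $(X,B)$.

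The main obstacle is the final bridge, from connectedness of $\Nklt(V,B_V)=\Supp C_V\cup\bigl(V\cap\bigcup_{a_i>0}B_i\bigr)$ to connectedness of $\Supp C_V$ itself, since the extra bad pieces $V\cap B_i$ could a priori link otherwise disconnected components of $C_V$. My proposal is to handle this on the minimal model. In the $(K_X+B)$-MMP running toward $(X',B')$, the bad divisors $B_i$ with $a_i>0$ form the support of the numerically positive part of $K_X+B\equiv\sum a_iB_i$ and are contracted (or have their coefficients driven to $0$), so the pushforward $B'$ agrees with the strict transform $C'$ of $C$ and $(X',C')=(X',B')$ is log Calabi-Yau. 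Applying [\ref{Kollar-singMMP}, Theorem~4.40] to each non-klt centre of $(X',C')$ — with the $\PP^1$-bundle exception ruled out by the zero-dimensional non-klt centre $x$, since two disjoint horizontal divisors of a $\PP^1$-contraction cannot contain a $0$-dimensional stratum — gives connectedness of each $C'_{V'}$, and the fact that $X\bir X'$ is an isomorphism near $x$ and preserves the stratification of $(X,C)$ away from the contracted bad locus pulls this back to connectedness of $C_V$.
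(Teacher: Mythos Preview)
Your second paragraph contains a genuine gap: you cannot apply Corollary~\ref{cor-mirror-symmetry-connectedness} to the pair $(X,B)$. Its hypothesis requires that \emph{no non-klt centre of $(X,B)$} lie in ${\bf B}_-(K_X+B)$, but since $B$ is reduced, every component $B_i$ is itself a non-klt centre of $(X,B)$. The bad divisors $B_i$ (those with $a_i>0$) are contracted by the $(K_X+B)$-MMP ending in the log Calabi-Yau model --- as you yourself observe in the third paragraph, since $\sum a_iB_i'\equiv K_{X'}+B'\sim_\Q 0$ forces all bad $B_i$ to be contracted --- and a divisor contracted by this MMP lies in ${\bf B}_-(K_X+B)$. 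So the hypothesis fails whenever there is at least one bad divisor. Your verification only checks that no stratum of $(X,C)$ lies in ${\bf B}_-(K_X+B)$, which is a strictly weaker statement.

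Your third paragraph attempts a direct workaround on the minimal model, and the observation $B'=C'$ is correct and useful, but the final sentence (``preserves the stratification of $(X,C)$ away from the contracted bad locus pulls this back'') hides exactly the nontrivial part: one must argue that the birational map induces a 1-1 correspondence between non-klt centres of $(X,C)$ and of $(X',C')$, and then run the contradiction argument from the proof of Corollary~\ref{cor-mirror-symmetry-connectedness} to transfer connectedness. This can be made to work, but you have not done it.

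The paper avoids all of this with a single clean trick: rather than applying Corollary~\ref{cor-mirror-symmetry-connectedness} to $(X,B)$, it introduces an auxiliary boundary $\Delta=B-\sum u_iB_i$ where $0<u_i\le a_i$ are small rationals chosen for each bad $B_i$ (and $u_i=0$ for good $B_i$). Then $\rddown{\Delta}=C$, so the non-klt centres of $(X,\Delta)$ are exactly the strata of $(X,C)$, none of which meet $\Supp(B-C)\supseteq{\bf B}_-(K_X+\Delta)$. One checks $\tau_E(K_X+\Delta)\le\tau_E(K_X+B)=0$, so Theorem~\ref{t-mirror-symmetry} applies to $(X,\Delta)$, and then Corollary~\ref{cor-mirror-symmetry-connectedness} applies directly to $(X,\Delta)$ and yields connectedness of $\rddown{\Delta_V}=C_V$ with no further work.
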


\bigskip


{\bf Plan of the paper.}
We will prove Theorems \ref{t-anti-nef-pairs-1} and \ref{t-anti-nef-pairs-2} in Section 3 and 
Theorem \ref{t-mirror-symmetry} and Corollaries \ref{cor-mirror-symmetry-connectedness} and 
\ref{cor-mirror-symmetry} in Section 4. 
We will actually prove more general forms of these results in the setting of generalised pairs. 
Generalised pairs play a key role in the proofs.

\bigskip
{\textbf{{Acknowledgements.}}}
This work was done with the support of Cambridge University and the Royal Society. Revision was done with  the support of Tsinghua University. Thanks to Mark Gross for posing Question \ref{q-gross} which triggered the start of this work, for related discussions, and for helping with some remarks in the introduction. Thanks to Yifei Chen, Christopher Hacon, Vyacheslav V. Shokurov,  
and the referees for helpful comments and corrections.


\section{\bf Preliminaries}

All varieties in this paper are quasi-projective over an algebraically closed field of characteristic zero unless otherwise 
stated.

\subsection{Contractions}
A \emph{contraction} is a projective morphism $f\colon X\to Y$ of varieties 
such that $f_*\mathcal{O}_X=\mathcal{O}_Y$ ($f$ is not necessarily birational). In particular, $f$ has connected fibres and 
if $X\to Z\to Y$ is the Stein factorisation of $f$, then $Z\to Y$ is an isomorphism.

\subsection{Pseudo-effective thresholds}
Given a projective morphism $X\to Z$ of varieties, a pseudo-effective $\R$-Cartier $\R$-divisor $L$ on $X$, and a prime divisor 
$S$ over $X$ (that is on birational models of $X$) we define the pseudo-effective threshold $\tau_S(L/Z)$ as follows. Pick a 
birational contraction $\phi\colon Y\to X$ from a normal variety so that $S$ is a $\Q$-Cartier divisor on $Y$, e.g. 
a resolution of singularities of $X$. Then define 
$$
\tau_S(L/Z)=\sup\{t\in \R^{\ge 0}\mid \phi^*L-tS ~~\mbox{is pseudo-effective$/Z$}\}.
$$
The next lemma shows that this is well-defined.
When $Z$ is a point, we will simply denote the threshold by $\tau_S(L)$.

\begin{lem}\label{l-tau-independent-of-phi}
The threshold $\tau_S(L/Z)$ is independent of the choice of $\phi\colon Y\to X$. 
\end{lem}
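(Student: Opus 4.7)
The plan is to reduce the comparison to a common resolution. Suppose $\phi_1\colon Y_1\to X$ and $\phi_2\colon Y_2\to X$ are two birational contractions from normal varieties on each of which $S$ is a $\Q$-Cartier prime divisor. Choose a common resolution with morphisms $\psi_i\colon W\to Y_i$ satisfying $\phi:=\phi_1\psi_1=\phi_2\psi_2$, and let $S_W$ denote the strict transform of $S$ on $W$; this is intrinsically attached to $S$ as a divisorial valuation over $X$, independent of the model $Y_i$.

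The first key step is to establish the identity $\psi_i^*S=S_W+E_i$ with $E_i$ an effective $\psi_i$-exceptional divisor. Since $S$ is effective and $\Q$-Cartier on $Y_i$, the pullback $\psi_i^*S$ is an effective $\Q$-divisor on $W$; decomposing into its non-exceptional and $\psi_i$-exceptional parts recovers $S_W$ and $E_i$ respectively, and because these have disjoint supports the effectivity of the sum forces $E_i\ge 0$ separately.

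Next I will show that for each real $t\ge 0$, the divisor $\phi_i^*L-tS$ is pseudo-effective$/Z$ on $Y_i$ if and only if $\phi^*L-tS_W$ is pseudo-effective$/Z$ on $W$. The forward direction uses the identity
$$
\phi^*L-tS_W=\psi_i^*(\phi_i^*L-tS)+tE_i,
$$
combined with the facts that pullback along the birational morphism $\psi_i$ preserves pseudo-effectivity and that $tE_i\ge 0$. For the converse I push forward along $\psi_i$: pushforward is a continuous $\R$-linear map on relative numerical divisor classes sending effective divisors to effective divisors, hence preserves pseudo-effectivity, and by the projection formula $\psi_{i*}(\phi^*L-tS_W)=\phi_i^*L-tS$.

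Since the condition ``$\phi^*L-tS_W$ is pseudo-effective$/Z$'' depends only on $\phi$, $L$, and $S_W$ and not on the choice of $Y_i$, taking the supremum of such $t$ yields the same value for both $i=1$ and $i=2$, so $\tau_S(L/Z)$ is well-defined. The only point requiring care is the effectivity of $E_i$; the rest is a routine application of the standard behaviour of pseudo-effective classes under birational pullback and pushforward.
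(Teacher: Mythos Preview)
Your proof is correct and follows essentially the same approach as the paper: both reduce to comparing thresholds via a dominating model, using that $\psi_i^*S\ge S_W$ (your $E_i\ge 0$) together with pushforward/pullback to get the two inequalities. The paper phrases this as reducing to the case where one model dominates the other, while you pass directly to a common resolution $W$; the arguments are otherwise the same. One tiny wording issue: ``disjoint supports'' should be ``no common irreducible components'', but the conclusion $E_i\ge 0$ is of course correct.
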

\begin{proof}
Let $\phi\colon Y\to X$ and $\phi'\colon Y'\to X$ be birational morphisms from normal varieties so that $S,S'$ 
are $\Q$-Cartier divisors on $Y,Y'$, respectively, where $S,S'$ represent the same divisor over $X$, 
that is, $S'$ is the birational transform of $S$. We denote the corresponding thresholds by 
$\tau_S^\phi(L/Z)$ and $\tau_{S'}^{\phi'}(L/Z)$. 
It is enough to show that $\tau_S^\phi(L/Z)=\tau_{S'}^{\phi'}(L/Z)$ when the induced map $\psi\colon Y'\bir Y$ 
is a morphism because in the general case we can use a common resolution of $Y',Y$. 

If $\phi'^*L-tS'$ is pseudo-effective$/Z$, then obviously 
the pushdown 
$$
\psi_*(\phi'^*L-tS')=\phi^*L-tS
$$ 
is pseudo-effective$/Z$. Thus  $\tau_{S'}^{\phi'}(L/Z)\le \tau_S^\phi(L/Z)$. 
Conversely, if  $\phi^*L-tS$ is pseudo-effective$/Z$, then $\psi^*(\phi^*L-tS)$ is pseudo-effective$/Z$. 
But 
$$
\psi^*(\phi^*L-tS)=\phi'^*L-t\psi^*S\le \phi'^*L-tS',
$$
as $\psi^*S\ge S'$, hence $\phi'^*L-tS'$ is pseudo-effective$/Z$ (this is where we use the $\Q$-Cartier condition of $S$ 
so that we can take the pullback $\psi^*S$). Thus  $\tau_{S'}^{\phi'}(L/Z)\ge \tau_S^\phi(L/Z)$ which in turn 
implies the equality $\tau_{S'}^{\phi'}(L/Z)=\tau_S^\phi(L/Z)$.

\end{proof}

\subsection{Pairs}
A \emph{sub-pair} $(X,B)$ consists of a normal quasi-projective variety $X$ and an $\R$-divisor 
$B$ such that $K_X+B$ is $\R$-Cartier. 
If the coefficients of $B$ are at most $1$ we say $B$ is a 
\emph{sub-boundary}, and if in addition $B\ge 0$, 
we say $B$ is a \emph{boundary}. A sub-pair $(X,B)$ is called a \emph{pair} if $B\ge 0$.

Let $\phi\colon W\to X$ be a log resolution of a sub-pair $(X,B)$. Let $K_W+B_W$ be the 
pullback of $K_X+B$. The \emph{log discrepancy} of a prime divisor $D$ on $W$ with respect to $(X,B)$ 
is $1-\mu_DB_W$ and it is denoted by $a(D,X,B)$.
We say $(X,B)$ is \emph{sub-lc} (resp. \emph{sub-klt})(resp. \emph{sub-$\epsilon$-lc}) 
if $a(D,X,B)$ is $\ge 0$ (resp. $>0$)(resp. $\ge \epsilon$) for every $D$. When $(X,B)$ 
is a pair we remove the sub and say the pair is lc, etc. Note that if $(X,B)$ is an lc pair, then 
the coefficients of $B$ necessarily belong to $[0,1]$. 

Let $(X,B)$ be a sub-pair. A \emph{non-klt place} of $(X,B)$ is a prime divisor $D$ on 
birational models of $X$ such that $a(D,X,B)\le 0$. A \emph{non-klt centre} is the image on 
$X$ of a non-klt place. When $(X,B)$ is lc, a non-klt centre is also called an 
\emph{lc centre}. The \emph{non-klt locus} $\Nklt(X,B)$ of a sub-pair $(X,B)$ is the union of 
the non-klt centres with reduced structure.

A sub-pair $(X,B)$ is \emph{log smooth} if $X$ is smooth and $\Supp B$ has simple 
normal crossing singularities.

\subsection{Minimal model program (MMP)}\label{ss-MMP} 
We will use standard results of the minimal model program (cf. [\ref{kollar-mori}][\ref{BCHM}]). 
Assume $(X,B)$ is a pair, $X\to Z$ is a projective morphism,  
$H$ is an ample$/Z$ $\R$-divisor, and that $K_X+B+H$ is nef$/Z$. 
First suppose $(X,B)$ is   
$\Q$-factorial dlt. Then we can run an MMP$/Z$ on $K_X+B$ with scaling of $H$ 
(cf. [\ref{B-mmodel}, \S 3]). 
In general we do not know whether the MMP terminates but 
we know that in some step of the MMP we reach a model $Y$ on which $K_Y+B_Y$, 
the pushdown of $K_X+B$, is a limit of movable$/Z$ $\R$-divisors: indeed, if the MMP terminates, then 
the claim is obvious; otherwise the MMP produces an infinite sequence $X_i\bir X_{i+1}$ 
of flips and a decreasing sequence $\lambda_i$ of scaling numbers in $(0,1]$ such that 
$K_{X_i}+B_i+\lambda_iH_i$ is nef$/Z$; by [\ref{BCHM}][\ref{B-lc-flips}, Theorem 1.9], $\lim\lambda_i=0$; 
in particular, if $Y:=X_1$, then $K_Y+B_Y$ is the limit of the movable$/Z$ $\R$-divisors 
$K_Y+B_Y+\lambda_i H_Y$. 

Now assume $(X,B)$ is klt with either $K_X+B$ or $B$ big$/Z$ (but $X$ not necessarily $\Q$-factorial). 
Then again we can run an MMP$/Z$ on $K_X+B$ with scaling of $H$ and this time the MMP terminates. 
We explain how this works. 

Let $X_1=X, B_1=B, H_1=H$ and let $\lambda_1$ be the smallest number such that 
$K_{X_1}+B_1+\lambda_1H_1$ is nef$/Z$. We can assume $\lambda_1>0$ otherwise the MMP 
ends on $X$. Moreover, we can choose $H_1\ge 0 $ in its $\R$-linear equivalence class so that 
$(X_1,B_1+\lambda_1H_1)$ is klt. Then $K_{X_1}+B_1+\lambda_1 H_1$ is semi-ample 
over $Z$ by the base point free theorem, 
hence it defines a contraction $X_1\to Z_1/Z$. 
Now $X_1$ has only finitely many extremal rays 
over $Z_1$ by the cone theorem 
[\ref{kollar-mori}, Theorem 3.25] as $H_1$ is big over $Z$ and $K_{X_1}+B_1+\lambda_1 H_1\equiv_{Z_1} 0$ 
(here $X_1$ is of Fano type over $Z_1$; see below for Fano type varieties). 
Since $\lambda_1>0$, there is an extremal ray $R_1$ over $Z_1$ such that 
$(K_{X_1}+B_1)\cdot R_1<0$. By the cone theorem, $R_1$ can be contracted via 
an extremal contraction $X_1\to T_1/Z_1$. If this contraction is not birational, 
then we get a Mori fibre space and the MMP ends here. 
Otherwise, $(X_1,B_1)$ has an lc model over $T_1$, say $(X_2,B_2)$ [\ref{BCHM}, Theorem 1.2]. 
Let $H_2$ be the pushdown of $H_1$ and let $\lambda_2$ be the smallest number such that 
$K_{X_2}+B_2+\lambda_2H_2$ is nef$/Z$. Now we repeat 
the same process on $X_2$ and so on to get the desired MMP. 
Note that $H_i$ may not be ample for $i>1$ but this is not a problem as 
we do not need ampleness beyond $X_1$.

We argue that the MMP terminates. We can assume that after finitely many steps the extremal contractions 
$X_i\to T_i$ are all flipping contractions. Pick $i\gg 0 $ and take a small $\Q$-factorialisation 
of $X_i$, say $X_i'\to X_i$. Let $B_i',H_i'$ be the birational transforms of $B_i,H_i$. 
Running an MMP on $K_{X_i'}+B_i'$ over $T_i$ with scaling of $\lambda_i H_i'$ lifts 
$X_i\bir X_{i+1}$ to a sequence of flips $X_i'\bir X_{i+1}'$ in the $\Q$-factorial case 
where $X_{i+1}'\to X_{i+1}$ is a small $\Q$-factorialisation. 
Putting all these together gives a sequence of flips for an MMP with scaling (of a big divisor) in the 
$\Q$-factorial case which terminates by [\ref{BCHM}, Corollary 1.4.2].  

Similar remarks apply to the MMP for generalised pairs defined below.

\subsection{Fano pairs}\label{ss-Fano}
Let $(X,B)$ be a pair and $X\to Z$ a contraction. We say $(X,B)$ is \emph{log Fano} over $Z$ 
if it is lc and $-(K_X+B)$ is ample over $Z$; if $B=0$ 
we just say $X$ is Fano over $Z$.
We say $X$ is \emph{of Fano type} over $Z$ if $(X,B)$ is klt log Fano over $Z$ for some choice of $B$;
it is easy to see this is equivalent to existence of a big$/Z$ $\Q$-boundary (resp. $\R$-boundary) 
$\Gamma$ so that $(X,\Gamma)$ is klt and $K_X+\Gamma \sim_{\Q,Z} 0$ (resp. $\sim_{\R,Z}$ instead of $\sim_{\Q,Z}$).

Assume $X$ is of Fano type over $Z$. Then we can run an MMP 
over $Z$ on any $\R$-Cartier $\R$-divisor $D$ on $X$ which ends with some model $Y$: 
the MMP is just an MMP on $K_X+\Gamma+tD$ with scaling of some ample divisor for some small $t>0$, as in \ref{ss-MMP}. 
If $D_Y$ is nef over $Z$,
we call $Y$ a \emph{minimal model} over $Z$ for $D$. If $D_Y$ is not nef$/Z$, then 
there is a $D_Y$-negative extremal contraction $Y\to T/Z$ with $\dim Y>\dim T$ and we call 
$Y$ a \emph{Mori fibre space} over $Z$ for $D$. 

If $X$ is of Fano type over $Z$ and $D$ is a nef$/Z$ $\R$-divisor on $X$, then $D$ is semi-ample over 
$Z$: there is a boundary $\Gamma$ so that $(X,\Gamma)$ is klt and $K_X+\Gamma \sim_{\R,Z} 0$; 
so choosing a small $t>0$, $(X,\Gamma+tD)$ is klt and $tD\sim_{\R,Z} K_X+\Gamma+tD$, hence $D$ is semi-ample over $Z$ 
because $K_X+\Gamma+tD$ is semi-ample over $Z$ by the base point free theorem.

\subsection{b-divisors}\label{ss-b-divisor}

We recall some definitions regarding b-divisors. 
Let $X$ be a normal variety. A \emph{b-divisor} $\mathbf{M}$ over $X$ is a collection of $\R$-divisors 
$M_Y$ on $Y$ for each birational contraction $Y\to X$ from a normal variety that are compatible 
with respect to pushdown, 
that is, if $Y'\to X$ is another birational contraction and $\psi\colon Y'\bir Y$ is a moprhism, 
then $\psi_*M_{Y'}=M_Y$. 
 
A b-divisor $\mathbf{M}$ is \emph{b-$\R$-Cartier} if there is a birational contraction $Y\to X$ 
such that $M_Y$ is $\R$-Cartier and such that for any birational contraction 
$\psi\colon Y'\to Y/X$ we have $M_{Y'}=\psi^*M_Y$. 
In other words, a b-$\R$-Cartier b-divisor over $X$ is determined by the choice of a birational contraction  
$Y\to X$ and an $\R$-Cartier $\R$-divisor $M$ on $Y$. But this choice is not unique,  
that is, another birational contraction $Y'\to X$ and an $\R$-Cartier $\R$-divisor
$M'$ on $Y'$ defines the same b-$\R$-Cartier b-divisor if there is a common resolution $W\to Y$ and $W\to Y'$ 
on which the pullbacks of $M$ and $M'$ coincide.  

A b-$\R$-Cartier  b-divisor  represented by some $Y\to X$ and $M$ is \emph{b-Cartier} if  $M$ is 
b-Cartier, i.e. its pullback to some resolution is Cartier.

\subsection{Generalised pairs}\label{ss-gpp}
For the basic theory of generalised polarised pairs see [\ref{BZh}, Section 4].
Below we recall some of the main notions and discuss some basic properties.\

(1)
A \emph{generalised pair} consists of 
\begin{itemize}
\item a normal variety $X$ equipped with a projective
morphism $X\to Z$, 

\item an $\R$-divisor $B\ge 0$ on $X$, and 

\item a b-$\R$-Cartier  b-divisor over $X$ represented 
by some projective birational morphism $X' \overset{\phi}\to X$ and $\R$-Cartier divisor
$M'$ on $X'$
\end{itemize}
such that $M'$ is nef$/Z$ and $K_{X}+B+M$ is $\R$-Cartier,
where $M := \phi_*M'$. 

We usually refer to the pair by saying $(X,B+M)$ is a  generalised pair with 
data $X'\overset{\phi}\to X\to Z$ and $M'$. Since a b-$\R$-Cartier b-divisor is defined birationally, 
in practice we will often replace $X'$ with a resolution and replace $M'$ with its pullback.
When $Z$ is not relevant we usually drop it
 and do not mention it: in this case one can just assume $X\to Z$ is the identity. 
When $Z$ is a point we also drop it but say the pair is projective. 

Now we define generalised singularities.
Replacing $X'$ we can assume $\phi$ is a log resolution of $(X,B)$. We can write 
$$
K_{X'}+B'+M'=\phi^*(K_{X}+B+M)
$$
for some uniquely determined $B'$. For a prime divisor $D$ on $X'$ the \emph{generalised log discrepancy} 
$a(D,X,B+M)$ is defined to be $1-\mu_DB'$. 

We say $(X,B+M)$ is 
\emph{generalised lc} (resp. \emph{generalised klt})(resp. \emph{generalised $\epsilon$-lc}) 
if for each $D$ the generalised log discrepancy $a(D,X,B+M)$ is $\ge 0$ (resp. $>0$)(resp. $\ge \epsilon$).
A \emph{generalised non-klt place} of $(X,B+M)$ is a prime divisor 
$D$ on birational models of $X$ with $a(D,X,B+M)\le 0$, and 
a \emph{generalised non-klt centre} of $(X,B+M)$ is the image of a generalised non-klt place.
The \emph{generalised non-klt locus} $\Nklt(X,B+M)$ of the generalised pair is the union of all 
the generalised non-klt centres.  

We will also use similar definitions when $B$ is not necessarily effective in which case we have a 
generalised sub-pair. 

(2)
Let $(X,B+M)$ be a generalised pair as in (1).
We say $(X,B+M)$ is \emph{generalised dlt} if it is generalised lc and if 
$\eta$ is the generic point of any generalised non-klt centre of 
$(X,B+M)$, then $(X,B)$ is log smooth near $\eta$ and $M'=\phi^*M$  holds over a neighbourhood of $\eta$.  
Note that when $M'=0$, then $(X,B)$ is {generalised dlt} iff it is dlt in the usual sense. 

The generalised dlt property is preserved under the MMP. Indeed, assume $(X,B+M)$ is {generalised dlt} and 
that $X\bir X''/Z$ is a divisorial contraction or a flip with respect to $K_{X}+B+M$. Replacing $\phi$ 
we can assume $X'\bir X''$ is a morphism. Let $B'',M''$ be the pushdowns of $B,M$ and consider 
$(X'',B''+M'')$ as a generalised pair with data $X'\to X''\to Z$ and $M'$.  Then $(X'',B''+M'')$
is also generalised dlt because it is generalised lc and because $X\bir X''$ is an isomorphism over the generic point of 
any generalised non-klt center of $(X'',B''+M'')$.

(3)
Let $(X,B+M)$ be a generalised pair as in (1) and let $\psi\colon X''\to X$ be a projective birational 
morphism from a normal variety. Replacing $\phi$ we can assume $\phi$ factors through 
$\psi$. We then let $B''$ and $M''$ be the pushdowns of 
$B'$ and $M'$ on $X''$ respectively. In particular,  
$$
K_{X''}+B''+M''=\psi^*(K_{X}+B+M).
$$
If $B''\ge 0$, then $(X'',B''+M'')$ is also a generalised pair 
with data $X'\to X''\to Z$ and $M'$. 

Assume that we can write $B''=\Delta''+G''$ where $(X'',\Delta''+M'')$ is $\Q$-factorial generalised dlt, 
$G''\ge 0$ is supported in $\rddown{\Delta''}$, and 
every exceptional prime divisor of $\psi$ is a component of $\rddown{\Delta''}$. Then we say 
$(X'',\Delta''+M'')$ is a \emph{$\Q$-factorial generalised dlt model} of $(X,B+M)$. Such models exist by 
the next lemma (also see [\ref{HMX2}, Proposition 3.3.1] and [\ref{BZh}, Lemma 4.5]). 
If $(X,B+M)$ is generalised lc, then $G''=0$.

\begin{lem}\label{l-Q-fact-dlt-model}
Let $(X,B+M)$ be a generalised pair with data $\phi\colon X'\to X$ and $M'$. 
Then the pair has a $\Q$-factorial generalised dlt model.
\end{lem}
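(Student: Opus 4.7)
The plan is to construct the $\Q$-factorial generalised dlt model via a log resolution and relative MMP, following [\ref{HMX2}, Proposition 3.3.1] and [\ref{BZh}, Lemma 4.5]. Replace the given $\phi\colon X' \to X$ by a higher log resolution of $(X, B)$ (pulling back $M'$, which remains nef), so that $X'$ is smooth, $(X', B')$ is log smooth, and $K_{X'} + B' + M' = \phi^*(K_X + B + M)$. On $X'$ define a divisor $\Theta'$ with coefficient $\min(1, \mu_D B')$ at every non-$\phi$-exceptional prime $D$ and coefficient $1$ at every $\phi$-exceptional prime. Then $\Theta' \ge 0$ has coefficients in $[0,1]$, and $(X', \Theta' + M')$ is $\Q$-factorial generalised log smooth, hence generalised dlt.

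Now run a $(K_{X'} + \Theta' + M')$-MMP over $X$ with scaling of an ample$/X$ divisor, obtaining $\psi\colon X'' \to X$ on which $K_{X''} + \Theta'' + M''$ is $\psi$-nef. By preservation of $\Q$-factoriality and generalised dlt under MMP steps (recorded in \ref{ss-gpp}(2)), the pair $(X'', \Delta'' + M'')$ with $\Delta'' := \psi_* \Theta'$ and $M'' := \psi_* M'$ is $\Q$-factorial generalised dlt. Letting $\Theta := \phi_* \Theta'$ on $X$ (so the coefficients of $\Theta$ are $\min(1,\mu_D B)$), the divisor $K_{X''} + \Theta'' + M'' - \psi^*(K_X + \Theta + M)$ is $\psi$-exceptional and $\psi$-nef, so by the negativity lemma it is $\le 0$. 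Hence every $\psi$-exceptional prime $E$ satisfies $a(E, X, \Theta + M) \le 0$, and since $B \ge \Theta$ componentwise we also get $a(E, X, B + M) \le 0$, i.e.\ $\mu_E B' \ge 1$. Setting $B'' := \psi_* B'$ and $G'' := B'' - \Delta''$, coefficient-by-coefficient verification then shows: on a non-$\psi$-exceptional prime $D$ the $G''$-coefficient is $\max(0, \mu_D B - 1)$, which is nonzero only where $\Delta''$ has coefficient $1$; on a $\psi$-exceptional prime $E$ it is $\mu_E B' - 1 \ge 0$, with $\Delta''$-coefficient $1$. Thus $G'' \ge 0$, $\Supp G'' \subseteq \rddown{\Delta''}$, every $\psi$-exceptional prime lies in $\rddown{\Delta''}$, and $(X'', \Delta'' + M'')$ is the desired $\Q$-factorial generalised dlt model.

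The main obstacle is the existence and termination of the relative MMP in the generalised pair setting. I would handle it via the MMP techniques for $\Q$-factorial generalised dlt pairs over a base outlined in \ref{ss-MMP} and [\ref{BZh}, Section 4], using that the MMP is birational over $X$ and can contract only finitely many $\phi$-exceptional divisors; an alternative is to perturb $M'$ by a small general ample and reduce to the ordinary dlt modification [\ref{HMX2}, Proposition 3.3.1]. Once the MMP is in hand, the remaining conditions in the definition of a $\Q$-factorial generalised dlt model are routine coefficient checks as above.
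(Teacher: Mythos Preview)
Your overall strategy matches the paper's: pass to a log resolution, set $\Theta'$ (the paper calls it $\Delta'$) to have coefficient $\min(1,\mu_DB)$ on non-exceptional primes and $1$ on exceptional primes, run an MMP on $K_{X'}+\Theta'+M'$ over $X$, and then verify that $G'':=B''-\Theta''\ge 0$ on the output. However, two steps in your execution are not justified as written.

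First, you assert that the MMP terminates with $K_{X''}+\Theta''+M''$ actually $\psi$-nef. Termination of flips is not known for $\Q$-factorial generalised dlt pairs, and your remark that only finitely many $\phi$-exceptional divisors can be contracted addresses divisorial contractions but not flips; the perturbation-by-ample suggestion would change the nef part and requires a separate argument to return to the original $M'$. The paper does \emph{not} claim termination: it only runs the MMP with scaling until $K_{X''}+\Delta''+M''$ becomes a limit of movable$/X$ $\R$-divisors (this always happens, cf.\ \ref{ss-MMP}). Since $K_{X''}+\Delta''+M''+G''\equiv 0/X$, the restriction $-G''|_{S''}$ is then pseudo-effective over the image of $S''$ for every exceptional$/X$ prime $S''$, and the general negativity lemma [\ref{B-lc-flips}, Lemma~3.3] (together with the fact that the non-exceptional part of $G''$ is the birational transform of $G=B-\Delta\ge 0$) gives $G''\ge 0$.

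Second, your negativity step invokes $\psi^*(K_X+\Theta+M)$, but $K_X+\Theta+M$ need not be $\R$-Cartier: only $K_X+B+M$ is assumed $\R$-Cartier, and when some coefficient of $B$ exceeds $1$ on a non-$\Q$-factorial $X$ the difference $G=B-\Theta$ may fail to be $\R$-Cartier, so neither the pullback nor the log discrepancy $a(E,X,\Theta+M)$ is defined. This is easily repaired \emph{if} you had nefness (compare instead with $\psi^*(K_X+B+M)$: then $-G''$ is $\psi$-nef with $\psi_*G''=G\ge 0$, and the ordinary negativity lemma applies), but that still presupposes termination. The paper's ``limit of movable plus general negativity'' argument handles both issues simultaneously.
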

\begin{proof}
Replacing $\phi$ we can assume it is a log resolution. Write 
$$
K_{X'}+B'+M'=\phi^*(K_{X}+B+M).
$$
Write $B=\Delta+G$ where $\Delta$ is obtained from $B$ by replacing each coefficient $>1$ with $1$. 
So $G\ge 0$ is supported in $\rddown{\Delta}$. 
Let $\Delta'$ be the sum of the birational transform of $\Delta$ and the reduced exceptional divisor 
of $\phi$. Let $G':=B'-\Delta'$. 

Run an MMP on $K_{X'}+\Delta'+M'$ over $X$ with scaling of an ample divisor. 
We reach a model $X''$ on which $K_{X''}+\Delta''+M''$ is a limit of movable$/X$ $\R$-divisors. 
By construction, 
$$
K_{X''}+\Delta''+M''+G''\equiv_X 0.
$$
So for any exceptional$/X$ prime divisor $S''$ on $X''$, $-G''|_{S''}$ is pseudo-effective over the image of $S''$ 
in $X$.
Therefore, by the general negativity lemma [\ref{B-lc-flips}, Lemma 3.3], $G''\ge 0$ (note that 
[\ref{B-lc-flips}, Lemma 3.3] implicitly assumes the base field is uncountable; 
if in our case the base field is countable, then we do a base change and then apply the lemma 
as in that case for the very general curves $C''$ of $S''$ 
contracted over $X$, we have $-G''\cdot C\ge 0$).

By definition of $\Delta'$, each exceptional prime divisor of $X''\to X$ is a component of 
$\rddown{\Delta''}$. Moreover, each component of $G''$ is either exceptional in which case it is a 
component of $\rddown{\Delta''}$, or non-exceptional in which case it is the birational transform 
of a component of $G$ hence again a component of $\rddown{\Delta''}$. Therefore, 
$(X'',\Delta''+M'')$ is a {$\Q$-factorial generalised dlt model} of $(X,B+M)$.

\end{proof}

(4) 
Next we prove two lemmas about generalised pairs that will be used later in Section 4.

\begin{lem}\label{l-application-of-local-global-ACC}
Let $d,p$ be natural numbers and $\Phi$ be a DCC set of non-negative real numbers. 
Then there is a positive real number $t>0$ depending only on $d,p,\Phi$ satisfying the following. 
Assume that 
\begin{itemize}
\item $(X,B+M)$ is a $\Q$-factorial generalised pair of dimension $d$ with data \\ $X'\to X\to Z$ and 
$M'$, 
\item $D\ge 0$ is an $\R$-divisor, 
\item the coefficients of $B,D$ are in $\Phi$ and $pM'$ is Cartier.
\end{itemize}
 Then we have:
\begin{enumerate}
\item if $(X,B+(1-t)D+M)$ is generalised lc, then $(X,B+D+M)$ is generalised lc, and 

\item if $(X,B+(1-t)D+M)$ is generalised lc, $X\to Z$ is a Mori fibre space for $(X,B+(1-t)D+M)$, 
and $K_X+B+D+M$ is nef over $Z$, then 
$$
K_X+B+D+M\equiv_Z 0.
$$
\end{enumerate}
\end{lem}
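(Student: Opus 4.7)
The plan is to derive both parts by contradiction from the two fundamental ACC-type results for generalised pairs established in [\ref{BZh}]: the ACC for generalised log canonical thresholds and the global ACC for numerically trivial generalised lc pairs. For each part I produce a putative counterexample sequence $(X_i, B_i + M_i), D_i$ with $t_i \to 0^+$, extract a subsequence using the DCC property of $\Phi$, and contradict one of these ACC statements.

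For part (1), let $s_i$ denote the generalised lc threshold of $D_i$ with respect to $(X_i, B_i + M_i)$. The hypothesis that $(X_i, B_i + (1-t_i) D_i + M_i)$ is generalised lc, together with the failure of the conclusion, forces $s_i \in [1 - t_i, 1)$, so $s_i \to 1^-$ and some subsequence is strictly increasing. But with dimension bounded by $d$, coefficients in the fixed DCC set $\Phi$, and $pM'$ Cartier, the ACC for generalised lc thresholds ([\ref{BZh}, Theorem 1.5]) rules out any strictly increasing sequence of such thresholds, a contradiction.

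For part (2), once (1) is invoked to ensure that $(X, B + D + M)$ itself is generalised lc for $t$ small, the whole family $(X, B + \lambda D + M)$ with $\lambda \in [0, 1]$ is generalised lc. Since $X \to Z$ is a Mori fibre space for $(X, B + (1-t)D + M)$, we have $\rho(X/Z) = 1$ and $K_X + B + (1-t)D + M$ is anti-ample over $Z$, while by hypothesis $K_X + B + D + M$ is nef over $Z$. The pencil $\lambda \mapsto K_X + B + \lambda D + M$ therefore passes through a unique numerical zero $\lambda_* \in (1 - t, 1]$ with $K_X + B + \lambda_* D + M \equiv 0/Z$. If $\lambda_* = 1$ we are done; otherwise $\lambda_* \in (1 - t, 1)$ and $K_X + B + D + M$ is ample over $Z$. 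Restricting to a general fibre $F$ of $X \to Z$ yields a projective generalised lc pair $(F, B_F + \lambda_* D_F + M_F)$ with $K_F + B_F + \lambda_* D_F + M_F \equiv 0$, coefficients of $B_F$ and $D_F$ inherited from $\Phi$, and nef part of index still $p$. Moreover $D$ must dominate $Z$ (otherwise $(K_X + B + \lambda D + M)|_F$ would be independent of $\lambda$, contradicting the sign change forced by the Mori fibre space condition), so $D_F \neq 0$. In a counterexample sequence the values $\lambda_i \in (1 - t_i, 1)$ satisfy $\lambda_i \to 1^-$; fix a component of $D_{F_i}$ whose coefficient $\alpha_i \in \Phi$ in $D_i$ is positive, and let $\alpha_i' \in \Phi \cup \{0\}$ be the coefficient of the same component in $B_{F_i}$. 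By DCC, pass to a subsequence along which $\alpha_i = \alpha > 0$ and $\alpha_i' = \alpha'$ are constant. The global ACC for generalised pairs ([\ref{BZh}, Theorem 1.6]) applied to $(F_i, B_{F_i} + \lambda_i D_{F_i} + M_{F_i})$ forces every coefficient $\alpha' + \lambda_i \alpha$ to lie in a fixed finite set $\Phi_0 = \Phi_0(d, p, \Phi)$; but these values are pairwise distinct along a strictly increasing subsequence of $\lambda_i$, a contradiction.

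The main technical obstacle is the passage to a general fibre in (2): one must verify that $(F, B_F + \lambda_* D_F + M_F)$ is a bona fide generalised pair in the sense of \ref{ss-gpp}, i.e., that the nef b-divisor $M'$ restricts via a suitable resolution of $F$ to a nef divisor still with index dividing $p$, and that the horizontal components of $B$ and $D$ restrict with coefficients in $\Phi$. This is a routine but careful unpacking of the b-divisor formalism and of generic restriction; once it is in hand the DCC/ACC machinery delivers both contradictions essentially automatically, with the resulting constant $t = t(d, p, \Phi) > 0$ depending only on $d$, $p$, and $\Phi$.
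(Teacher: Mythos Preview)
Your argument follows the paper's exactly: for (1) you contradict the ACC for generalised lc thresholds [\ref{BZh}, Theorem 1.5] via $s_i\in[1-t_i,1)$; for (2) you use (1) to get generalised lc at $\lambda=1$, locate the numerically trivial value $\lambda_*\in(1-t,1)$ on the Mori fibre space, restrict to a general fibre, and contradict the global ACC [\ref{BZh}, Theorem 1.6].

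One small slip to fix: the DCC condition does \emph{not} let you pass to a subsequence on which $\alpha_i$ and $\alpha_i'$ are constant (for instance $\{1-1/n\}$ is DCC with all terms distinct), so your final contradiction as written does not go through. The paper sidesteps this by arguing with the whole coefficient set at once: since $\lambda_i\to 1^-$ the set $\{\lambda_i\}$ itself satisfies DCC, hence so does the product $\{\lambda_i\}\cdot(\Phi\cup\{0\})$ and the sum $(\Phi\cup\{0\})+\{\lambda_i\}\cdot(\Phi\cup\{0\})$, which contains every coefficient of $B_{F_i}+\lambda_i D_{F_i}$; global ACC then confines these coefficients to a fixed finite subset. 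To finish, pass to a subsequence with $b_i,d_i\in\Phi\cup\{0\}$ non-decreasing (this DCC does give) and $b_i+\lambda_i d_i$ equal to a fixed element of that finite set, and derive a contradiction from $d_i>0$ and $\lambda_i\nearrow 1$. With this adjustment your proof is correct and matches the paper's.
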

\begin{proof}
(1) If this is not true, then there exist a strictly decreasing sequence 
$t_i$ of real numbers approaching $0$ and a sequence $(X_i,B_i+M_i),D_i$ 
of pairs and divisors as in the lemma such that 
$$
(X_i,B_i+(1-t_i)D_i+M_i)
$$ 
is generalised lc but $(X_i,B_i+D_i+M_i)$ is not generalised lc. 
Let $u_i$ be the generalised lc threshold of $D_i$ with respect to $(X_i,B_i+M_i)$. 
Then $u_i$ belongs to an ACC set depending only on $d,p,\Phi$, by [\ref{BZh}, Theorem 1.5]. 
On the other hand, $1>u_i\ge 1-t_i$, so the $u_i$ form a sequence approaching $1$. 
This contradicts the ACC property.  

(2)
Now we prove the second claim. Again if it is not true, then there 
exist a strictly decreasing sequence 
$t_i$ of real numbers approaching $0$ and a sequence $(X_i,B_i+M_i),D_i$ 
of pairs and divisors as in the lemma such that 
$$
(X_i,B_i+(1-t_i)D_i+M_i)
$$ 
is generalised lc, $X_i\to Z_i$ a Mori fibre space structure, and 
with $K_{X_i}+B_i+D_i+M_i$ nef over $Z_i$ 
but such that 
$$
K_{X_i}+B_i+D_i+M_i\not\equiv_{Z_i} 0.
$$ 
Let $v_i\in (1-t_i,1)$ be the number such that 
$$
K_{X_i}+B_i+v_iD_i+M_i\equiv_{Z_i} 0.
$$ 

By (1), $(X_i,B_i+D_i+M_i)$ is generalised lc. 
Let $F_i$ be a general fibre of $X_i\to Z_i$. 
Restricting to $F_i$, we get 
$$
K_{F_i}+B_{F_i}+v_iD_{F_i}+M_{F_i}\equiv 0
$$ 
where $({F_i},B_{F_i}+v_iD_{F_i}+M_{F_i})$ naturally inherits the structure of a generalised 
pair, induced by  $({X_i},B_i+v_iD_i+M_i)$, with nef part $M_{F_i'}=M'_i|_{F_i'}$ where 
$F_i'$ is the fibre of $X_i'\to Z_i$ corresponding to $F_i$. Now the coefficients of $B_{F_i},D_{F_i}$ 
belong to $\Phi$ and $pM_{F_i'}$ is Cartier. Then we get a contradiction, by 
the global ACC [\ref{BZh},Theorem 1.6] 
as the coefficients of $B_{F_i}+v_iD_{F_i}$ are in a DCC but not finite set.

\end{proof}

\begin{lem}\label{l-tau-for-non-canonical-places-extraction}
Assume that 
\begin{itemize}
\item $(X,B+M)$ is a generalised lc pair with data $X'\to X$ and $M'$,
\item $(X,C+N)$ is generalised klt with data $X'\to X$ and $N'$, and  
\item $S$ is a prime divisor over $X$ with 
$$
a(S,X,B+M)<1.
$$
\end{itemize}
Then there is a birational contraction $Y\to X$ from a normal variety such that 
$S$ is a divisor on $Y$ and $-S$ is ample over $X$.
\end{lem}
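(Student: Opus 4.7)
The strategy is to build $Y$ by an MMP on a log resolution, contracting every exceptional divisor other than $S$. First, take a common log resolution $\phi\colon W\to X$ of $(X,B)$ and $(X,C)$ on which $S$ is a divisor and both $M'$ and $N'$ are realised (replace $X'$ by $W$, and $M',N'$ by their pullbacks). Write
$$K_W+B_W+M'=\phi^*(K_X+B+M), \qquad K_W+C_W+N'=\phi^*(K_X+C+N).$$
The hypothesis $a(S,X,B+M)<1$ yields $\mu_S B_W>0$, and generalised klt-ness of $(X,C+N)$ gives that all coefficients of $C_W$ are strictly $<1$.

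For a small rational $\epsilon>0$ set $\Theta_W:=(1-\epsilon)C_W+\epsilon B_W$ and $P':=(1-\epsilon)N'+\epsilon M'$; then $(W,\Theta_W+P')$ is a generalised sub-klt sub-pair with $\mu_S\Theta_W>0$. Let $F$ denote the sum (with coefficient one) of all exceptional prime divisors of $\phi$ other than $S$. Next choose an effective $\R$-divisor $G\ge 0$ supported in $F$ which cancels any negative coefficients of $\Theta_W$ and gives every component of $F$ a strictly positive coefficient in $\Delta_W := \Theta_W+G$, while keeping all coefficients of $\Delta_W$ strictly less than $1$. For $\epsilon$ and the coefficients of $G$ sufficiently small this is possible precisely because $(X,C+N)$ is generalised klt, and then $(W,\Delta_W+P')$ is $\Q$-factorial generalised klt. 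By construction
$$K_W+\Delta_W+P' \;=\; \phi^*(K_X+\Theta+P)+G \;\equiv\; G \, /\, X,$$
where $\Theta=(1-\epsilon)C+\epsilon B$ and $P=(1-\epsilon)N+\epsilon M$, since pullbacks via $\phi$ are numerically trivial over $X$.

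Now run a $(K_W+\Delta_W+P')$-MMP over $X$ with scaling of an ample$/X$ divisor. Since the pair is generalised klt and the divisor is numerically equivalent over $X$ to an effective divisor supported in $F$, this process is essentially a $G$-MMP over $X$; by the standard MMP theory for generalised klt pairs (as in Section~\ref{ss-MMP}) it terminates on some model $Y\to X$, and by the negativity lemma it contracts exactly the components of $F$. In particular $S$ survives as a divisor on $Y$ and is the unique exceptional prime divisor of $Y\to X$; since $Y$ is $\Q$-factorial this forces $\rho(Y/X)=1$, and a final application of the negativity lemma (ruling out $S$ itself being ample over $X$) shows that $-S$ is ample over $X$, as required.

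\textbf{Main obstacle.} The delicate point is twofold: arranging $G$ so that $(W,\Delta_W+P')$ is genuinely generalised klt (not just sub-klt), which is what forces the interpolation with $(X,C+N)$ and the smallness of $\epsilon$; and verifying that the MMP terminates and contracts exactly $F$ without touching $S$. Both issues are handled by the klt hypothesis on $(X,C+N)$ together with the standard negativity lemma for effective exceptional divisors.
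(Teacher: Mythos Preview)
Your overall strategy---interpolate the two generalised pairs to get a klt structure with $a(S,\cdot)<1$, then run an MMP over $X$ to kill all exceptional divisors except $S$---is the right idea and close in spirit to the paper's argument. However, the final step contains a genuine gap, and there is a minor slip earlier.

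\textbf{The gap.} The implication ``$Y$ is $\Q$-factorial with a unique exceptional prime divisor $S$, hence $\rho(Y/X)=1$'' is false when $X$ is not $\Q$-factorial, and the lemma does not assume $\Q$-factoriality of $X$. For instance, if $X$ is the cone over a smooth quadric surface and $Y\to X$ is the blowup of the vertex, then $Y$ is smooth with a single exceptional divisor but $\rho(Y/X)=2$. So you cannot conclude that $-S$ is ample over $X$ from $\rho(Y/X)=1$. What your MMP on $K_W+\Delta_W+P'\equiv G/X$ actually produces is a model on which $G_Y=0$; it says nothing about the positivity of $-S$. To finish, you must run a further MMP on $K_Y+(\Delta_Y-cS)+P_Y\equiv -cS/X$ (for small $c>0$, still generalised klt) and pass to the ample model of $-S$ over $X$; then the negativity lemma shows $S$ is not contracted. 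This is precisely what the paper does, though it first simplifies by replacing the nef part $M'$ with an honest divisor (using an ample plus effective decomposition $A'+G'\sim_\Q 0/X$), reducing to an ordinary klt pair $(X,B)$, taking a crepant terminal model $(V,B_V)$, and then running the MMP directly on $K_V+B_V-cS\sim_\R -cS/X$ to reach the ample model of $-S$.

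\textbf{The minor slip.} You assert $\mu_S\Theta_W>0$ for \emph{small} $\epsilon$, but $\mu_S C_W=1-a(S,X,C+N)$ can be negative (nothing prevents $a(S,X,C+N)>1$). What you actually need is some $\epsilon\in(0,1)$ with $(1-\epsilon)\mu_S C_W+\epsilon\mu_S B_W>0$; since $\mu_S B_W>0$ this is achievable by taking $\epsilon$ close enough to $1$, and sub-klt-ness of $(W,\Theta_W+P')$ holds for every $\epsilon\in(0,1)$, so the rest of your construction of $G$ and $\Delta_W$ goes through unchanged.
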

\begin{proof}
Take a small rational number $u>0$ and consider the generalised pair 
$$
(X,uC+(1-u)B+uN+(1-u)M)
$$
with nef part $uN'+(1-u)M'$. The pair is generalised klt and 
$$
a(S,X,uC+(1-u)B+uN+(1-u)M)<1.
$$
Thus replacing $(X,B+M)$ with the pair above, we can assume that $(X,B+M)$ is generalised klt.

There exist an ample divisor $A'$ and an effective divisor $G'$ on $X'$ such that $A'+G'\sim_{\Q,X} 0$. 
Take a small  number $t>0$ and general element $0\le L'\sim_\R M'+tA'$. Letting 
$L$ be the pushdown of $L'$ we see that $(X,B+tG+L)$ is klt and 
$$
a(S,X,B+tG+L)<1.
$$ 
Thus replacing $(X,B+M)$ with $(X,B+tG+L)$ we can assume 
$M'=0$ and that $(X,B)$ is klt. 

Now by [\ref{BCHM}, Corollary 1.4.3], there is a crepant $\Q$-factorial 
terminal model $(V,B_V)$ of $(X,B)$. 
Then $S$ is a divisor on $V$ and the coefficient of $S$ in 
$B_V$ is positive. Since 
$$
K_V+B_V-cS\sim_{\R,X} -cS,
$$ 
where $c>0$ is sufficiently small, 
we see that $-S$ has an ample model $Y$ over $X$.
By the negativity lemma, $S$ is not contracted over $Y$. Abusing notation we denote 
the birational transform of $S$ on $Y$ again by $S$. 
Then on $Y$, $-S$ is ample over $X$,  so we get the desired model.
 
\end{proof}

\subsection{Generalised adjunction for fibrations} \label{ss-Adjunction-gen-pair}
 Consider the following set-up. Assume that
\begin{itemize}
  \item $(X,B+M)$ is a generalised sub-pair with data $X'\rightarrow X\to Z$ and $M'$,
  \item $f:X\rightarrow Z$ is a contraction with $\dim Z>0$,
  \item $(X,B+M)$ is generalised sub-lc over the generic point of $Z$, and
  \item $K_X+B+M\sim_{\mathbb{R},Z}0$.
\end{itemize}
We define the discriminant divisor $B_Z$ for the above setting. Let $D$ be a prime divisor on $Z$.
Let $t$ be the largest real number such that $(X,B+tf^*D+M)$ is generalised sub-lc
over the generic point of $D$. This makes sense even if $D$ is not $\mathbb{Q}$-Cartier because
we only need the pullback $f^*D$ over the generic point of $D$ where $Z$ is smooth.
 We then put the coefficient of $D$ in $B_Z$ to be $1-t$. Note that since $(X,B+M)$ is generalised
 sub-lc over the generic point of $Z$, $t$ is a real number, that is, it is not $-\infty$ or $+\infty$.
 Having defined $B_Z$, we can find $M_Z$ giving
$$K_X+B+M\sim_{\mathbb{R}}f^*(K_Z+B_Z+M_Z)$$
where $M_Z$ is determined up to $\mathbb{R}$-linear equivalence. We call $B_Z$ the 
\emph{discriminant divisor of adjunction} for $(X,B+M)$ over $Z$. 
If $B,M'$ are $\Q$-divisors and $K_X+B+M\sim_{\mathbb{Q},Z}0$, then
$B_Z$ is a $\Q$-divisor and we can choose $M_Z$ also to be a $\Q$-divisor.
For any birational morphism $Z'\to Z$ from a normal variety, we can similarly define $B_{Z'}$ and $M_{Z'}$. This gives the discriminant and moduli b-divisors.

For more details about generalised adjunction for  fibrations, we refer to [\ref{Filipazzi-18}] and
\S 6.1 of [\ref{B-lcyf}]. 

\begin{thm}
Under the above notation, assume that $X$ is projective,
$(X,B+M)$ is generalised lc over the generic point of $Z$, and $B,M'$ are
$\Q$-divisors, and $M'$ is globally nef. 
Then the moduli divisor of adjunction is nef b-$\Q$-Cartier, that is, 
there is a resolution $Z'\to Z$ such that  
$M_{Z'}$ is nef $\Q$-Cartier and for any birational contraction 
$Z''\to Z'$, $M_{Z''}$ is the pullback of $M_{Z'}$.  
\end{thm}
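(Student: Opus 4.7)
The theorem has two parts: (i) nefness of $M_{Z'}$ on a sufficiently high resolution $Z'\to Z$, and (ii) the descent compatibility $M_{Z''}=g^*M_{Z'}$ for any further birational morphism $g\colon Z''\to Z'$. The strategy, following Filipazzi [\ref{Filipazzi-18}] and Birkar [\ref{B-lcyf}, \S 6.1], is to reduce both statements to Ambro's classical canonical bundle formula for ordinary pairs via an ample perturbation of the nef part $M$. As a preliminary setup, replace $X$ by a common log resolution so that $\phi=\mathrm{id}$, $X$ is smooth, $(X,B)$ is log smooth, and $M=M'$ is nef and $\Q$-Cartier on $X$, still with $K_X+B+M\sim_\Q 0/Z$. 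Fix $Z'\to Z$ high enough that $Z'$ is smooth, the rational map $X\dashrightarrow Z'$ is a morphism $f'\colon X\to Z'$ (after further blowing up $X$ if needed), and $B_{Z'}$ has SNC support. Then $M_{Z'}$ is determined, modulo $\Q$-linear equivalence, by
$$K_X+B+M\sim_\Q f'^{*}(K_{Z'}+B_{Z'}+M_{Z'}).$$

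For nefness, it suffices to verify $M_{Z'}\cdot C\geq 0$ for every curve $C\subset Z'$. Normalising $C$ to $\nu\colon\tilde C\to C$ and forming the resolved base-change $X_{\tilde C}\to\tilde C$, the pulled-back adjunction equation gives an adjunction over $\tilde C$ whose moduli divisor $M_{\tilde C}$ on $\tilde C$ satisfies $\deg M_{\tilde C}=\deg(\tilde C/C)\cdot M_{Z'}\cdot C$, so it suffices to show $\deg M_{\tilde C}\geq 0$. For this, choose a very ample $\Q$-divisor $A$ on $X_{\tilde C}$ and, for each rational $\epsilon>0$, a general effective $\Q$-divisor $N_\epsilon\sim_\Q M+\epsilon A$; the ordinary sub-pair $(X_{\tilde C},B+N_\epsilon)$ is sub-lc over the generic point of $\tilde C$. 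Applying Ambro's classical canonical bundle formula to this ordinary sub-pair produces a moduli divisor $M_{\tilde C}^\epsilon$ on $\tilde C$ which has nonnegative degree by Kawamata semipositivity. A careful comparison of the two adjunction equations (decomposing $A$ into horizontal and vertical parts along $X_{\tilde C}\to\tilde C$ to absorb the discrepancy $\epsilon A$ into a clean contribution on $\tilde C$) shows $\deg M_{\tilde C}^\epsilon-\deg M_{\tilde C}=O(\epsilon)$. Letting $\epsilon\to 0$ and using that nonnegativity of degree is a closed condition yields $\deg M_{\tilde C}\geq 0$.

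For the descent, let $g\colon Z''\to Z'$ be birational between high resolutions. Take a common log resolution $X^*$ of $X$ mapping to both $Z'$ and $Z''$, and pull back the adjunction equations. The key identity is $K_{Z''}+B_{Z''}=g^*(K_{Z'}+B_{Z'})$, which holds because the defining lc thresholds of divisors on $Z''$ are compatible under pullback by $g$ (ramification and log-discrepancy changes cancel on $g$-exceptional divisors). Combined with $f'^{*}(K_{Z'}+B_{Z'}+M_{Z'})\sim_\Q f''^{*}(K_{Z''}+B_{Z''}+M_{Z''})$, both equal to $\psi^*(K_X+B+M)$ where $\psi\colon X^*\to X$, this yields $M_{Z''}=g^*M_{Z'}$ modulo $\Q$-linear equivalence, which is the intended content of the theorem.

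The main obstacle is the nefness step, specifically the limiting argument: the ample perturbation changes the relative class over $\tilde C$ by $\epsilon A$ (which is not pulled back from $\tilde C$), so one must track precisely how this correction affects $M_{\tilde C}^\epsilon$ in the limit. The standard device, used in Filipazzi's and Birkar's treatments, is the horizontal-vertical decomposition of $A$: the vertical part contributes cleanly via $f'^*$, while the horizontal part's contribution to $M_{\tilde C}^\epsilon$ is controlled by Fujita-Kawamata semipositivity applied to the relevant Hodge-theoretic pushforward and vanishes as $\epsilon\to 0$. This bookkeeping is the essential technical content beyond the classical Ambro theorem.
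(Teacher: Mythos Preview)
The paper does not prove this theorem itself: it simply records that the case $M'=0$ is due to Ambro [\ref{ambro-adj}] (building on Kawamata [\ref{kaw-subadjuntion}]) and the general case to Filipazzi [\ref{Filipazzi-18}], and then uses the statement as a black box. So there is no in-paper argument to compare line by line; the relevant benchmark is the cited literature.

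Your overall strategy---perturb the nef part $M'$ by a small ample to turn the generalised pair into an ordinary sub-pair, invoke the classical Ambro--Kawamata semipositivity, and pass to a limit---is indeed the mechanism in [\ref{Filipazzi-18}] and [\ref{B-lcyf}]. The descent claim (ii) is also handled essentially as you describe.

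However, your nefness argument has a genuine gap at the curve step. You base-change to an arbitrary curve $\tilde C\to Z'$ and assert that the moduli divisor of the base-changed fibration satisfies $\deg M_{\tilde C}=\deg(\tilde C/C)\cdot(M_{Z'}\cdot C)$. This compatibility of the moduli part with arbitrary base change is not automatic: the discriminant divisor can jump under base change (new vertical non-klt centres can appear over special points of $\tilde C$), so $M_{\tilde C}$ need not be the pullback of $M_{Z'}$. In fact, the statement that the moduli b-divisor behaves well under base change is essentially equivalent to the b-nef b-Cartier property you are trying to prove, so using it here is circular. The treatments in [\ref{Filipazzi-18}] and [\ref{B-lcyf}] avoid this by performing the ample perturbation \emph{globally on $X$}, applying Ambro's theorem to the resulting ordinary pair over the full base $Z'$ (so the perturbed moduli part $M_{Z'}^\epsilon$ is already nef on $Z'$), and then analysing $M_{Z'}^\epsilon\to M_{Z'}$ as $\epsilon\to 0$ directly on $Z'$, without ever restricting to curves. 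If you reorganise your argument along these lines, the remaining bookkeeping you describe (horizontal/vertical decomposition, control of the $\epsilon$-error) is the correct shape of what is needed.
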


In particular, we can regard $(Z,B_Z+M_Z)$ as a generalised sub-pair with nef part $M_{Z'}$. 
The theorem is proved in [\ref{ambro-adj}, \S 3] (based on [\ref{kaw-subadjuntion}]) when $M'=0$, and in 
[\ref{Filipazzi-18}, Theorem 1.4] in general. We will use the theorem in the proof of Theorem \ref{t-mirror-symmetry}.

\subsection{Connected components and \'etale neighbourhoods}\label{ss-etale}
Let $Z$ be a variety and $g\colon N\to Z$ be a projective morphism where $N$ is a scheme. 
In the discussion below keep in mind that the topology of the fibre of $g$ over a 
point $z\in Z$ is the same as the subset topology on $g^{-1}\{z\}$ induced by the 
topology on $N$ where $g^{-1}\{z\}$ means the set-theoretic inverse image 
(a similar fact holds in general for any morphism of schemes). 

(1)
We show that the fibres of $g$ are connected iff its fibres 
over closed points are connected. We actually prove a stronger statement. 
Let $z\in Z$ and let $R$ be the closure of $z$ in $Z$. We claim that   
for any closed point $y$ in some non-empty open subset of $R$, 
the number of connected components of $g^{-1}\{y\}$ is at least the 
number of connected components of $g^{-1}\{z\}$. 

To prove the claim, we can replace $Z$ with $R$ and replace $N$ with the scheme-theoretic 
inverse image of $R$, hence assume $z$ is the generic point of $Z$.  
Let $C_1, \dots, C_r$ be the connected components of $g^{-1}\{z\}$, and let $\overline{C}_i$ 
be the closure of $C_i$ in $N$. Then the fibre of $\overline{C}_i\to Z$ over $z$ is just $C_i$. 
Therefore, $\overline{C}_i\cap \overline{C}_j$ does not surject onto 
$Z$ for any $i\neq j$ otherwise $C_i$ and $C_j$ would intersect, 
so shrinking $Z$ we can assume that $\overline{C}_i\cap \overline{C}_j=\emptyset$ 
for $i\neq j$. But then since $\overline{C}_i$ surjects onto $Z$ for each $i$, the number of 
connected components of $g^{-1}\{y\}$ is at least $r$, for any closed point $y\in Z$. 
This proves the claim. 

(2) 
For each closed point $z\in Z$, there is an \'etale neighbourhood $\tilde{Z}\to Z$ with a closed point $\tilde{z}$ mapping 
to $z$ such that there is a 1-1 correspondence between the connected components of 
$\tilde{N}:=N\times_Z\tilde{Z}$, and the connected components of the fibre of $g$ over $z$ [\ref{Kollar-singMMP}, claim 4.38.1].    
If the ground field is $\C$, then this essentially says that the connected components of $N$ 
over a small analytic neighbourhood of $z$ correspond to the connected components of the fibre 
$N\to Z$ over $z$.


\section{\bf Non-klt loci of anti-nef pairs}

In this section we prove Theorems \ref{t-anti-nef-pairs-1} and \ref{t-anti-nef-pairs-2} 
in the more general framework of generalised pairs. Using generalised pairs is important for the proofs even 
if one is only interested in usual pairs.

\begin{thm}\label{t-anti-nef-pairs-2-generalised}
Let $(X,B+M)$ be a generalised pair with data $X'\to X\to Z$ and $M'$ where $f\colon X\to Z$ is a contraction. 
Assume $-(K_X+B+M)$ is nef over $Z$. 
Then the fibres of 
$$
\Nklt(X,B+M)\to Z
$$ 
are connected if any of the following conditions holds:  
\begin{enumerate}
\item $-(K_X+B+M)$ is big over $Z$, or 

\item $\Nklt(X,B+M)\to Z$ is not surjective, or 

\item $\tau_S(-(K_X+B+M)/Z)>0$ for every generalised non-klt place $S$ of $(X,B+M)$.
\end{enumerate}
\end{thm}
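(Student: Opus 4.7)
The plan is to handle the three cases in order: (1) and (2) via cohomological vanishing on a $\Q$-factorial generalised dlt model, and (3) by reducing to a contradiction with the structure furnished by the generalised form of Theorem \ref{t-anti-nef-pairs-1} (to be proved in the same section). To prepare, I use Lemma \ref{l-Q-fact-dlt-model} to pass to a $\Q$-factorial generalised dlt model $\psi\colon(X'',B''+M'')\to(X,B+M)$; then $K_{X''}+B''+M''=\psi^*(K_X+B+M)$ and $\Nklt(X,B+M)=\psi(\rddown{B''})$, so it suffices to prove the fibres of $\rddown{B''}\to Z$ are connected. Writing $-\rddown{B''}=K_{X''}+\{B''\}+M''+N$ with $N:=-(K_{X''}+B''+M'')$, the pair $(X'',\{B''\}+M'')$ is generalised klt and $N$ is nef$/Z$.

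For case (1), $N$ is in addition big$/Z$, so generalised Kawamata--Viehweg vanishing yields $R^1(f\psi)_*\mathcal{O}_{X''}(-\rddown{B''})=0$; the short exact sequence
$$
0\to\mathcal{O}_{X''}(-\rddown{B''})\to\mathcal{O}_{X''}\to\mathcal{O}_{\rddown{B''}}\to 0
$$
then produces a surjection $\mathcal{O}_Z\to(f\psi)_*\mathcal{O}_{\rddown{B''}}$, and Stein factorisation of $\rddown{B''}\to Z$ delivers the connectedness of fibres. For case (2), the image of $\rddown{B''}$ is a proper closed subset $W\subsetneq Z$; applying generalised Koll\'ar torsion-freeness to the klt pair $(X'',\{B''\}+M'')$ shows that every associated prime of $R^1(f\psi)_*\mathcal{O}_{X''}(-\rddown{B''})$ is the image of a generalised lc centre, namely $Z$ itself. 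Hence this sheaf is torsion-free on $Z$, and the cokernel of $\mathcal{O}_Z\to(f\psi)_*\mathcal{O}_{\rddown{B''}}$, which is supported on $W$ and embeds into this torsion-free sheaf, must vanish; connectedness follows as before.

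For case (3), suppose for contradiction that some fibre of $\Nklt(X,B+M)\to Z$ is disconnected. The generalised form of Theorem \ref{t-anti-nef-pairs-1} then produces a resolution $\phi\colon X'\to X$ and a contraction $X'\to Y'/Z$ whose general fibre $F'\cong\PP^1$ satisfies $(K_{X'}+B'+M')\cdot F'=0$, together with two disjoint horizontal components $S',T'$ of $\rddown{B'}$ each meeting $F'$ in a single point. Since $S'$ is a generalised non-klt place of $(X,B+M)$, hypothesis (3) furnishes some $t>0$ with $\phi^*(-(K_X+B+M))-tS'$ pseudo-effective$/Z$. Intersecting with $F'$ yields $0-t\cdot 1=-t<0$, which contradicts the non-negativity of the pairing between a divisor pseudo-effective$/Z$ and the movable curve $F'$: the family $\{F'\}$ covers each fibre of $X'\to Z$, so its intersection with any divisor pseudo-effective$/Z$ is non-negative.

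The main obstacle is to deploy the generalised Kawamata--Viehweg vanishing and Koll\'ar torsion-freeness in the $\R$-coefficient framework of the paper (possibly via $\Q$-approximation of $B$ and the nef part $M'$), and to ensure the generalised analogue of Theorem \ref{t-anti-nef-pairs-1} needed for case (3) is established in parallel within the same section without circularity. The intersection computation on $F'$ is robust in the generalised setting because the $\PP^1$-fibration structure forces $M'|_{F'}$ to have degree zero.
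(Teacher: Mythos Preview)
Your treatment of cases (1) and (3) is essentially correct and close to the paper's: for (1) the paper reduces to the usual connectedness principle via [\ref{B-compl}, Lemma 2.14] rather than invoking a generalised vanishing theorem directly, but both routes work since bigness of $-(K_X+B+M)$ lets one absorb the nef part into an effective divisor; for (3) your intersection with the general fibre $F'$ of the $\PP^1$-fibration is exactly the paper's argument.

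The genuine gap is in case (2). You invoke a ``generalised Koll\'ar torsion-freeness'' theorem for the pair $(X'',\{B''\}+M'')$ to conclude that $R^1(f\psi)_*\mathcal{O}_{X''}(-\rddown{B''})$ is torsion-free on $Z$. This is precisely the approach of [\ref{Kollar-singMMP}, Proposition 4.37] for ordinary pairs, but it is \emph{not} known for generalised pairs: the nef part $M'$ is only nef, not semi-ample, and the Hodge-theoretic input underlying Koll\'ar--Fujino torsion-freeness does not carry over. The paper flags this explicitly (Step 1 of the proof of Theorem \ref{t-anti-nef-pairs-1-generalised}) and develops an entirely different argument for (2) in Lemma \ref{l-anti-nef-pairs-2-generalised-(2)}: after arranging $K_X+B+M\equiv 0/Z$ and passing to a dlt model so that $\Nklt=\rddown{\Delta}$, one runs an MMP on $K_X+\Delta-t\rddown{\Delta}+M$ over $Z$ (which is pseudo-effective since $\rddown{\Delta}$ is vertical), tracks that the number of connected components of the non-klt locus is preserved in each step using case (1), shows that some irreducible component $F_j$ of the fibre over $z$ stays outside the non-klt locus throughout the MMP, and finally derives a contradiction by finding a curve $C\subset F_j$ with $(K_X+\Delta-t\rddown{\Delta}+M)\cdot C\ge 0$ and $(t\rddown{\Delta}+G)\cdot C>0$. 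Your $\Q$-approximation suggestion does not rescue the cohomological route, since torsion-freeness is not stable under limits and the nef part need not be a limit of semi-ample divisors.
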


We prove cases (1) and (2) first and then prove case (3) towards the end of this section.
Note that in view of Theorem \ref{t-anti-nef-pairs-1-generalised} below, we can add another case 
in which the theorem holds, that is, the fibre of 
$\Nklt(X,B+M)\to Z$ over a point $z\in Z$ is connected if $(X,B+M)$ is not generalised lc over $z$.  
 
\begin{rem}\label{rem-connected-near-fib}
\emph{ 
In view of \ref{ss-etale}, to prove the theorem, 
it is enough to prove the weaker statement that $\Nklt(X,B+M)$ is connected near each fibre of $X\to Z$  
as all the conditions (1)-(3) are preserved after \'etale base change.
We explain this point in detail. 
By \ref{ss-etale}(1), it is enough to show that the fibres of 
$$
N:=\Nklt(X,B+M)\to Z
$$ 
over closed points 
are connected. And by \ref{ss-etale}(2), for each 
closed point $z\in Z$, there is an \'etale neighbourhood $\tilde{Z}\to Z$ with a closed point $\tilde{z}$ mapping 
to $z$ such that there is a 1-1 correspondence between the connected components of 
$\tilde{N}:=N\times_Z\tilde{Z}$, and the connected components of the fibre of $N\to Z$ over $z$. 
Fibre product with $\tilde{Z}$ induces a generalised pair $(\tilde{X},\tilde{B}+\tilde{M})$ 
with data $\tilde{X}'\to \tilde{X}\to \tilde{Z}$ and $\tilde{M'}$ where 
$\tilde{N}=\Nklt(\tilde{X},\tilde{B}+\tilde{M})$. It is enough to show that 
$\Nklt(\tilde{X},\tilde{B}+\tilde{M})$ is connected near the fibre of $\tilde{X}\to \tilde{Z}$ over $\tilde{z}$, 
hence replacing $(X,B+M)$, $X'\to X\to Z$ and $M'$ with $(\tilde{X},\tilde{B}+\tilde{M})$,  
$\tilde{X}'\to \tilde{X}\to \tilde{Z}$ and $\tilde{M'}$, respectively, it is enough to show that 
$\Nklt(X,B+M)$ is connected near each fibre of $X\to Z$.}
\end{rem}

\begin{lem}\label{l-anti-nef-pairs-2-generalised-(1)}
Theorem \ref{t-anti-nef-pairs-2-generalised} (1) holds. 
\end{lem}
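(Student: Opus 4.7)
The plan is to adapt the classical Shokurov--Koll\'ar connectedness argument to the generalised pair setting, using Kawamata--Viehweg vanishing on a log resolution. By the preceding remark, it suffices to establish connectedness of $\Nklt(X,B+M)$ near each fibre of $f$. First I would pass to a log resolution $\phi\colon X'\to X$ on which $M'$ is defined and nef$/Z$, write
$$K_{X'}+B'+M'=\phi^*(K_X+B+M),$$
and observe that $\Nklt(X,B+M)=\phi(\Supp \Sigma)$, where $\Sigma:=\lfloor B'^{\ge 1}\rfloor$ is the integral divisor whose components are the prime divisors $E_i$ of $X'$ with $\mu_{E_i}B'\ge 1$. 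I would also introduce the effective integral divisor $A$ whose coefficient on each $E_i$ is $\lceil -\mu_{E_i}B'\rceil$ when $\mu_{E_i}B'<0$ and zero otherwise. Since $B\ge 0$, such components are automatically $\phi$-exceptional, so $A$ is $\phi$-exceptional, and $A$ and $\Sigma$ have disjoint supports.

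A case-by-case check on each prime of $B'$ (splitting $\mu_{E_i}B'<0$, $0\le \mu_{E_i}B'<1$, and $\mu_{E_i}B'\ge 1$) shows that
$$D:=A-\Sigma+B'$$
is snc with coefficients in $[0,1)$; in particular $(X',D)$ is klt. Using $B'=\phi^*(K_X+B+M)-K_{X'}-M'$ this rearranges as
$$A-\Sigma-K_{X'}-D \,\equiv\, \phi^*\bigl(-(K_X+B+M)\bigr)+M' \quad /Z,$$
and the right-hand side is nef$/Z$ (sum of two nef divisors) and big$/Z$, since the pullback of the big $-(K_X+B+M)$ under the birational $\phi$ is big and adding a nef divisor preserves bigness. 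Kawamata--Viehweg vanishing for $\R$-divisors with snc fractional part therefore yields
$$R^1 f'_*\mathcal{O}_{X'}(A-\Sigma)=0,$$
where $f':=f\circ\phi$.

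Pushing forward the short exact sequence
$$0\to \mathcal{O}_{X'}(A-\Sigma)\to \mathcal{O}_{X'}(A)\to \mathcal{O}_\Sigma(A|_\Sigma)\to 0$$
and using both $A|_\Sigma=0$ (disjoint supports) and the vanishing produces a surjection $f'_*\mathcal{O}_{X'}(A)\twoheadrightarrow f'_*\mathcal{O}_\Sigma$. Since $A$ is effective and $\phi$-exceptional, $\phi_*\mathcal{O}_{X'}(A)=\mathcal{O}_X$, so $f'_*\mathcal{O}_{X'}(A)=f_*\mathcal{O}_X=\mathcal{O}_Z$ as $f$ is a contraction. The resulting surjection $\mathcal{O}_Z\twoheadrightarrow f'_*\mathcal{O}_\Sigma$ forces, via the Stein factorisation of $\Sigma\to Z$, the fibres of $\Sigma\to Z$, and hence of $\Nklt(X,B+M)\to Z$, to be connected. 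The step I expect to require the most care is the treatment of the generalised nef part $M'$ in the vanishing argument: one must verify that $\phi^*(-(K_X+B+M))+M'$ remains nef and big over $Z$ after peeling off the snc fractional boundary $D$. This is the only point at which the argument meaningfully departs from the classical Shokurov--Koll\'ar case.
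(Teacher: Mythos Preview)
Your proof is correct. It is essentially the classical Shokurov--Koll\'ar argument run directly on the generalised pair, and each step checks out: the divisor $D=A-\Sigma+B'$ indeed has snc support with coefficients in $[0,1)$, the identity $A-\Sigma-(K_{X'}+D)=\phi^*(-(K_X+B+M))+M'$ is a straightforward rearrangement, and this divisor is nef and big over $Z$ since it is a sum of a big nef divisor and a nef divisor. Kawamata--Viehweg vanishing and the standard exact-sequence argument then give the surjection $\mathcal{O}_Z\twoheadrightarrow f'_*\mathcal{O}_\Sigma$, forcing connected fibres. The point you flagged as delicate---handling $M'$ in the vanishing---is in fact routine once one observes that $M'$ simply appears as an extra nef summand.

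The paper takes a shorter but less self-contained route: it invokes [\ref{B-compl}, Lemma~2.14], which reduces the generalised-pair statement to the classical connectedness principle for ordinary pairs (essentially by absorbing $M'$ into a usual boundary). So the paper outsources exactly the step you carry out by hand. Your approach has the advantage of being self-contained and making transparent that the presence of $M'$ causes no new difficulty in the vanishing argument; the paper's approach has the advantage of brevity and of isolating the reduction step as a reusable lemma. Both are standard and neither is materially more general than the other in this context.
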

\begin{proof}
As pointed out above it is enough to show that 
$\Nklt(X,B+M)$ is connected near each fibre of $X\to Z$. 
We can then use [\ref{B-compl}, Lemma 2.14] which reduces the statement to the connectedness principle 
for usual pairs.

\end{proof}

\begin{lem}\label{l-anti-nef-pairs-2-generalised-(2)}
Theorem \ref{t-anti-nef-pairs-2-generalised} (2) holds. 
\end{lem}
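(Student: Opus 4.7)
The plan is to reduce the claim to Lemma \ref{l-anti-nef-pairs-2-generalised-(1)} (the nef-and-big case) by a perturbation argument. By the Remark preceding this lemma, it suffices to show that $\Nklt(X,B+M)$ is connected near each fibre of $f\colon X\to Z$; using Lemma \ref{l-Q-fact-dlt-model} I will then replace $(X,B+M)$ by a $\mathbb Q$-factorial generalised dlt model $\pi\colon (X'',B''+M'')\to (X,B+M)$, which does not affect the connectedness question since $\pi$ maps the non-klt locus of the dlt model surjectively onto $\Nklt(X,B+M)$.

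The hypothesis that $\Nklt(X,B+M)\to Z$ is not surjective is equivalent to $(X,B+M)$ being generalised klt over the generic point of $Z$; equivalently, every non-klt place $S$ of $(X,B+M)$ is vertical over $Z$. Consequently, $\tau_S(-(K_X+B+M)/Z)=+\infty$ for every such $S$, since the restriction of a vertical $S$ to the generic fibre of $f$ is zero while $-(K_X+B+M)$ is nef there and hence pseudo-effective. Now fix a closed point $z\in Z$ and suppose for contradiction that $\Nklt(X,B+M)$ has at least two connected components near $f^{-1}(z)$. Shrinking $Z$ and passing to an \'etale neighbourhood as in \S\ref{ss-etale}, I may assume that $Z$ contains a closed point $z_0\notin f(\Nklt(X,B+M))$. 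The core step is then to use $z_0$, together with the verticality of non-klt places and Lemma \ref{l-tau-for-non-canonical-places-extraction} (which extracts a chosen non-klt place $S$ as a divisor on a birational model on which $-S$ is ample over $X$), to produce a birational model where, after a small perturbation of the generalised boundary, the anti-log-canonical becomes nef and big over $Z$ while the number of connected components of the non-klt locus near the preimage of $f^{-1}(z)$ is not decreased. Applying Lemma \ref{l-anti-nef-pairs-2-generalised-(1)} to the modified pair then yields a contradiction.

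The hardest part will be to carry out this perturbation so that bigness over $Z$ is genuinely achieved: bigness is a condition on the generic fibre of $f$, where the pair is generalised klt and $-(K_X+B+M)$ is only nef, so the additional positivity must come entirely from manipulating vertical non-klt places and exploiting the ampleness over $X$ provided by Lemma \ref{l-tau-for-non-canonical-places-extraction}. Care will also be required to preserve the connected-component structure of the non-klt locus near $f^{-1}(z)$ during the perturbation, which is where the choice of $z_0\notin f(\Nklt(X,B+M))$ and a carefully chosen relative MMP are likely to enter.
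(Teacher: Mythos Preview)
Your proposal has a genuine gap at the core step, and the plan as stated cannot be completed.

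The difficulty you flag as ``the hardest part'' is in fact an obstruction. Bigness of $-(K_X+B+M)$ over $Z$ is decided on the generic fibre $F$ of $f$: it holds iff $-(K_F+B_F+M_F)$ is big. Every non-klt place $S$ of $(X,B+M)$ is vertical over $Z$ (this is exactly the hypothesis of case (2)), so $S|_F=0$. Extracting such an $S$ via Lemma \ref{l-tau-for-non-canonical-places-extraction} gives $-S$ ample over $X$, not over $Z$; since $Y\to X$ is birational, this contributes nothing to the generic fibre of $Y\to Z$ either. Thus no combination of (i) subtracting or adding vertical divisors, (ii) extracting vertical non-klt places, (iii) choosing an auxiliary closed point $z_0\notin f(\Nklt(X,B+M))$, can alter $-(K_X+B+M)|_F$. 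If $-(K_X+B+M)$ is not already big over $Z$, your perturbation scheme cannot make it big while keeping the same generalised pair structure over the generic point, and hence you never land in the situation of Lemma \ref{l-anti-nef-pairs-2-generalised-(1)}. The observation $\tau_S(-(K_X+B+M)/Z)=+\infty$ is correct but inert: it only reflects that vertical $S$ restrict trivially to $F$, which is precisely why they cannot supply the missing positivity.

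The paper's argument does not attempt to gain bigness at all. After the same dlt reduction, it observes that verticality of $\rddown{\Delta}$ forces $K_X+\Delta-t\rddown{\Delta}+M$ to be pseudo-effective over $Z$, runs an MMP on this divisor with scaling (so $t\rddown{\Delta}+G$ is positive on every extremal ray, and the connected-component count of the non-klt locus is preserved via Lemma \ref{l-anti-nef-pairs-2-generalised-(1)} applied to each step), and then argues directly on the special fibre: some irreducible component $F_j$ of $f^{-1}(z)$ lying outside $\Nklt(X,B+M)$ survives the MMP, hence $(K_X+\Delta-t\rddown{\Delta}+M)|_{F_j}$ is pseudo-effective, and a curve $C\subset F_j$ meeting $\rddown{\Delta}$ gives $(t\rddown{\Delta}+G)\cdot C>0$, contradicting $K_X+B+M\equiv 0/Z$. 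This is the idea you are missing; you should replace the ``make it big'' strategy with this MMP-plus-curve argument.
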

\begin{proof}
\emph{Step 1.}
In this step we reduce the statement to the case $K_X+B+M\equiv_Z 0$.
By Remark \ref{rem-connected-near-fib}, it is enough to show that 
$\Nklt(X,B+M)$ is connected near each fibre of $X\to Z$ over closed points. 
Assume that $\Nklt(X,B+M)$ is not connected near the fibre of $X\to Z$ over some closed point $z$. 
Shrinking $Z$ around $z$, we can assume that $\Nklt(X,B+M)$ is not connected globally. 
Extending the ground field we can assume it is not countable.

Let 
$$
L:=-(K_X+B+M)
$$ 
and let $L'$ be the pullback of $L$ on $X'$. Since $L$ is nef over $Z$, 
$L'$ is nef over $Z$, hence $M'+L'$ is nef over $Z$.
Consider the generalised pair $(X,B+M+L)$ with data $X'\to X\to Z$ and $M'+L'$. The generalised non-klt locus of 
$(X,B+M+L)$ coincides with that of $(X,B+M)$ because $L$ being nef over $Z$ means that 
for any prime divisor $S$ over $X$ we have 
$$
a(S,X,B+M+L)=a(S,X,B+M).
$$
Thus replacing $M'$ with $M'+L'$ we can assume 
that $K_X+B+M\equiv_Z 0$.\\ 

\emph{Step 2.}
In this step we modify $(X,B+M)$ so that $B=\Delta+G$ where $B,G\ge 0$, $G$ is supported in 
$\rddown{\Delta}$, and 
$(X,\Delta-t\rddown{\Delta}+M)$ is $\Q$-factorial generalised klt for some $t\in (0,1)$.
Let $(X'',\Delta''+M'')$ be a $\Q$-factorial generalised dlt model of $(X,B+M)$ which exists by Lemma \ref{l-Q-fact-dlt-model}. By definition, $(X'',\Delta''+M'')$ is 
$\Q$-factorial generalised dlt.
Denoting $X''\to X$ by $\psi$, we have 
$$
K_{X''}+\Delta''+G''+M''=\psi^*(K_X+B+M)
$$
where $G''\ge 0$ is supported in $\rddown{\Delta''}$. 
We can assume $X'\bir X''$ is a morphism, so we can consider $(X'',\Delta''+G''+M'')$ 
as a generalised pair with nef part $M'$. Since 
$$
\Nklt(X,B+M)=\psi(\Nklt(X'',\Delta''+G''+M'')),
$$
we deduce that 
$$
\Nklt(X'',\Delta''+G''+M'')
$$ 
is not connected over $z$. Thus 
we can replace $(X,B+M)$ with $(X'',\Delta''+G''+M'')$, hence 
we can assume that the following condition holds: 

\begin{itemize}
\item[$(*)$] $B=\Delta+G$ where $G\ge 0$ is supported in $\rddown{\Delta}$ and 
$(X,\Delta-t\rddown{\Delta}+M)$ is $\Q$-factorial generalised klt for some $t\in (0,1)$.
\end{itemize}
The condition implies that any generalised non-klt centre of $(X,B+M)$ is contained in the support of 
$ t\rddown{\Delta}+G$, so we have 
$$
\rddown{\Delta}\subseteq \Nklt(X,B+M)\subseteq \Supp (t\rddown{\Delta}+G)=\rddown{\Delta},
$$
giving
$$
\Nklt(X,B+M)=\Supp (t\rddown{\Delta}+G)=\rddown{\Delta}.
$$
By assumption, $\Nklt(X,B+M)$ is vertical over $Z$, so $\rddown{\Delta}$ is vertical over $Z$. 
Thus $t\rddown{\Delta}+G$ is also vertical over $Z$, so 
$$
K_X+\Delta-t\rddown{\Delta}+M\equiv 0
$$
over some non-empty open subset of $Z$, hence $K_X+\Delta-t\rddown{\Delta}+M$ is pseudo-effective 
over $Z$.\\ 

\emph{Step 3.}
In this step we run an MMP on $K_X+\Delta-t\rddown{\Delta}+M$ and see that 
the condition $\Nklt(X,B+M)=\rddown{\Delta}$ is preserved by the MMP.  
Ineed, we can run an MMP on $K_X+\Delta-t\rddown{\Delta}+M$ over $Z$ with scaling of some 
ample divisor $H$ [\ref{BZh}, Lemma 4.4] but we do not claim that the MMP terminates. However, if $\lambda_i$ are 
the scaling numbers that appear in the MMP, then $\lim \lambda_i=0$, by [\ref{BZh}, Lemma 4.4].
Since 
$$
K_X+\Delta+G+M\equiv_Z 0,
$$ 
the divisor $t\rddown{\Delta}+G$ is numerically positive on the extremal ray of each step of 
the MMP. Clearly the condition $(*)$ is preserved by the MMP, so the property 
$$
\Nklt(X,B+M)=\Supp (t\rddown{\Delta}+G)=\rddown{\Delta}
$$
is also preserved. Also note that since $K_X+\Delta-t\rddown{\Delta}+M$ is pseudo-effective over $Z$, 
every step of the MMP is a divisorial contraction or a flip.\\

\emph{Step 4.}
In this step we show that $\Nklt(X,B+M)$ remains disconnected near the fibre over $z$ during the MMP.
More precisely, we show that there is a 1-1 correspondence (given by divisorial pushdown) 
between the connected components of 
$\Nklt(X,B+M)$ and of $\Nklt(X'',B''+M'')$ for any model $X''$ appearing in the MMP. 
By Step 3, $\Nklt(X,B+M)$ coincides with $\Supp (t\rddown{\Delta}+G)$, and 
$\Nklt(X'',B''+M'')$ coincides with $\Supp(t\rddown{\Delta}''+G'')$, so it is enough to 
prove the assertion for connected components of $\Supp (t\rddown{\Delta}+G)$ 
and of $\Supp(t\rddown{\Delta}''+G'')$.

Say $X\bir X''$ is the first step of the 
MMP which is either a divisorial contraction or a flip. First assume $X\bir X''$ is a flip and let 
$X\to V$ be the corresponding flipping contraction. 
Each connected component of $\Supp (t\rddown{\Delta''}+G'')$ away from the flipped locus is just the birational transform of a connected component of $\Supp (t\rddown{\Delta}+G)$ 
away from the flipping locus. 
On the other hand, since $t\rddown{\Delta}+G$ is ample over $V$, there is a connected component 
$\mathcal{C}$ of $\Supp (t\rddown{\Delta}+G)$ intersecting every positive-dimensional fibre of $X\to V$. 
By Step 2 and Lemma \ref{l-anti-nef-pairs-2-generalised-(1)}, $\Supp (t\rddown{\Delta}+G)$ is connected near any fibre of $X\to V$ as $-(K_X+B+M)$ is nef and big over $V$. Therefore, no other connected component of $\Supp (t\rddown{\Delta}+G)$  intersects the exceptional locus of $X\to V$. 
It is enough to show that the birational transform $\mathcal{C}''$ of $\mathcal{C}$ is connected. 
 
If a connected component of $\mathcal{C}''$ does not intersect the flipped locus, then 
it is the birational transform of a connected component of $\mathcal{C}$, hence 
it coincides with the whole $\mathcal{C}''$ (as $\mathcal{C}$ is connected), 
which is not possible because $\mathcal{C}''$ intersects the flipped locus. Thus every 
connected component of $\mathcal{C}''$ intersects the flipped locus. On the other hand,    
the birational transform $t\rddown{\Delta}''+G''$ of $t\rddown{\Delta}+G$ 
is anti-ample over $V$, so at least one of its irreducible 
components, say $D''$, contains the flipped locus. 
Thus $D''$ intersects every connected component of $\Supp(t\rddown{\Delta}''+G'')$ near the flipped locus. 
Therefore, $\Supp(t\rddown{\Delta}''+G'')$ 
has only one connected component near the flipped locus, and this connected component is exactly $\mathcal{C}''$. 
The claim is then proved in the flip case.

A similar argument shows that if $X\bir X''$ is a divisorial contraction, then exactly one 
connected component $\mathcal{C}$ of $\Supp (t\rddown{\Delta}+G)$  
intersects the exceptional divisor. Moreover, in this 
case $\mathcal{C}$ is not contracted by $X\bir X''$, that is, $\mathcal{C}$ is not 
equal to the exceptional divisor, by the negativity lemma, because 
$t\rddown{\Delta}+G$ is ample over $X''$. Thus each connected component of $\Supp(t\rddown{\Delta}''+G'')$ 
is just the divisorial pushdown of exactly one connected component of $\Supp (t\rddown{\Delta}+G)$. 
This proves the claim in the divisorial contraction case. 

Since the claim holds in each step of the MMP, it holds on any model appearing in the MMP.\\

\emph{Step 5.}
In this step we show that after finitely many steps of the MMP, 
the irreducible components of the fibre of $X\to Z$ over $z$ not contained in the non-klt locus 
$\Nklt(X,B+M)$ stabilise.
Let $F_1,\dots,F_r$ be the irreducible components of the fibre of $X\to Z$ over $z$. 
We claim that there is $i$ such that $F_i$ is not contained in $\Nklt(X,B+M)$ and that 
this is preserved by the MMP, that is, the birational transform of $F_i$ is not contained in 
the non-klt locus during the MMP. 

By assumption, $f\colon X\to Z$ is a contraction, so its fibre over $z$ is connected which 
means the set-theoretic inverse image $f^{-1}\{z\}$ is connected. On the other hand, 
we assumed that $\Nklt(X,B+M)$ is not connected near the fibre of $X\to Z$ over $z$, 
so 
$$
f^{-1}\{z\}\not\subseteq \Nklt(X,B+M)=\Supp (t\rddown{\Delta}+G)
$$
as $f^{-1}\{z\}$ intersects every connected component of $\Nklt(X,B+M)$ over $z$.
Thus some of the $F_i$ are not contained in $\Nklt(X,B+M)$; rearranging the indices we can assume that $F_1,\dots,F_s$ are not 
contained in $\Nklt(X,B+M)$ but $F_{s+1},\dots,F_r$ are contained. 

Let $X\bir X''$ be the first step of the MMP. Recall that $t\rddown{\Delta}+G$ is positive on the extremal 
ray of each step of the MMP. Thus 
if $X\bir X''$ is a divisorial contraction, then $\Nklt(X'',B''+M'')$ contains the image of 
the exceptional divisor, and if $X\bir X''$ is a flip, then similarly $\Nklt(X'',B''+M'')$ contains the  
flipped locus, that is, the exceptional locus of $X''\to V$ where $X\to V$ is the corresponding 
flipping contraction. This implies that any irreducible component of the fibre of $X''\to Z$ over $z$ 
which is not contained in $\Nklt(X'',B''+M'')$ is the birational transform of one of the $F_1,\dots,F_s$. 
Therefore, after finitely many steps, the irreducible components of the fibre over $z$ not contained in the non-klt locus 
stabilise, that is, replacing $X$ and possibly decreasing $s$ we can assume that on each model appearing in the MMP, the 
birational transforms of the $F_1,\dots,F_s$  are exactly the irreducible 
components of the fibre over $z$  not contained in the non-klt locus.\\

\emph{Step 6.}
In this step we show that 
$$
(K_X+\Delta-t\rddown{\Delta}+M)|_{F_j}
$$
is pseudo-effective for each $F_j$, $1\le j\le s$, where $F_1,\dots, F_s$ are 
as in the previous step. 
Since $F_j$ is not contained in $\Supp (t\rddown{\Delta}+G)$ in the course 
of the MMP, the map $X\bir X''$ is an isomorphism near the generic point of $F_j$ for any model $X''$ 
that appears in a step of the MMP. Since the MMP is an MMP on $K_X+\Delta-t\rddown{\Delta}+M$ 
over $Z$ with scaling of a big divisor, we deduce that $F_j$ is not contained in 
the relative stable base locus 
$$
{\bf B}(K_X+\Delta-t\rddown{\Delta}+M+\lambda_iH/Z)
$$ 
for any $i$ where $\lambda_i$ are the scaling numbers in the MMP. 
In particular,  this implies that 
$$
(K_X+\Delta-t\rddown{\Delta}+M+\lambda_iH)|_{F_j}
$$
is pseudo-effective for every $i$. But then since $\lim \lambda_i=0$,
$$
(K_X+\Delta-t\rddown{\Delta}+M)|_{F_j}
$$
is pseudo-effective. Therefore, 
$$
(K_X+\Delta-t\rddown{\Delta}+M)\cdot C\ge 0
$$
for any curve $C\subset F_j$ outside some countable union of subvarieties of $F_j$. 
Since the ground field is not countable, this countable union is not equal to $F_j$.\\

\emph{Step 7.}
In this step we finish the proof.
Since the fibre of $X\to Z$ over $z$ is connected, there is $1\le j\le s$
such that $F_j$ intersects $\Nklt(X,B+M)$. By the previous step, we can find a curve $C\subset F_j$ intersecting $\Nklt(X,B+M)$ but not contained in it such that  
$$
(K_X+\Delta-t\rddown{\Delta}+M)\cdot C\ge 0.
$$
Then $C$ intersects $\Supp(t\rddown{\Delta}+G)$ but is not contained in it. Therefore, 
$$
(t\rddown{\Delta}+G)\cdot C>0
$$
which contradicts  
$$
(K_X+\Delta-t\rddown{\Delta}+M+t\rddown{\Delta}+G)\cdot C=0.
$$

\end{proof}

Next we treat a generalised version of Theorem \ref{t-anti-nef-pairs-1}.

\begin{thm}\label{t-anti-nef-pairs-1-generalised}
Let  $(X,B+M)$ be a generalised pair with data $X'\to X\to Z$ where $f\colon X\to Z$ is a contraction. 
Assume $-(K_X+B+M)$ is nef over $Z$ and that the fibre of 
$$
g:\Nklt(X,B+M)\to Z
$$ 
over some point $z\in Z$ is not connected. Then we have:
\begin{enumerate}
\item $g$ is surjective and its fibre over $z$ has exactly two connected components,

\item the pair $(X,B+M)$ is generalised lc over $z$ and after base change to an \'etale neighbourhood of $z$ and 
replacing $X'$ with a high resolution, there exist a contraction 
$X'\to Y'/Z$ such that if 
$$
K_{X'}+B'+M':=\phi^*(K_X+B+M)
$$ 
and if $F'$ is a general fibre of $X'\to Y'$, 
then $M'|_{F'}\equiv 0$ and $(F',B'|_{F'})$ is isomorphic to $(\PP^1,p_1+p_2)$ for distinct points $p_1,p_2$.

Moreover, $\rddown{B'}$ has two disjoint irreducible components $S',T'$, both horizontal over $Y'$, and 
the images of $S',T'$ on $X$ are exactly the two connected components of $\Nklt(X,B+M)$. 
\end{enumerate}
\end{thm}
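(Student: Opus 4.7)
The plan closely parallels the proof of Lemma \ref{l-anti-nef-pairs-2-generalised-(2)}: reduce, run an MMP, and read off the desired $\PP^1$-fibration from its endpoint. First I would absorb $-(K_X+B+M)$ into the nef part $M$ (replacing $M'$ by $M'+\phi^*(-(K_X+B+M))$); this preserves generalized log discrepancies and hence $\Nklt$, while arranging $K_X+B+M\equiv 0/Z$. Next, after an \'etale base change near $z$ (using \ref{ss-etale}(2)), I may assume $\Nklt(X,B+M)$ is globally disconnected. Surjectivity of $g$ in (1) is then immediate from Theorem \ref{t-anti-nef-pairs-2-generalised}(2) in the contrapositive. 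Finally, using Lemma \ref{l-Q-fact-dlt-model}, pass to a $\Q$-factorial generalized dlt modification $(X'',\Delta''+G''+M'')$, where $G''\ge 0$ is supported in $\rddown{\Delta''}$ and $\Nklt(X'',\Delta''+G''+M'')=\rddown{\Delta''}$ inherits the two-component structure.

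Following the strategy of Lemma \ref{l-anti-nef-pairs-2-generalised-(2)}, for small $t>0$ satisfying condition $(*)$, I would run an MMP$/Z$ on $K_{X''}+\Delta''-t\rddown{\Delta''}+M''$ with scaling of an ample divisor. By Lemma \ref{l-anti-nef-pairs-2-generalised-(1)} applied over the base of each extremal contraction, the two-component structure of $\Nklt$ is preserved bijectively throughout the MMP. The central claim is that this MMP must end with a Mori fibre space $\pi\colon W\to Y'/Z$: if it terminated at a minimal model $W$, the divisor $-(K_W+\Delta_W-t\rddown{\Delta_W}+M_W)\equiv t\rddown{\Delta_W}+G_W$ would be both effective and nef over $Z$, hence its restriction to a general fibre of $W\to Z$ would be effective and nef on a projective variety, forcing vanishing by intersecting with a high power of an ample divisor. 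Thus $t\rddown{\Delta_W}+G_W$ would be vertical over $Z$, contradicting the fact that both connected components of $\rddown{\Delta_W}$ are horizontal over $Z$ (after shrinking $Z$ around $z$ so that both components dominate).

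On $\pi$, the divisor $t\rddown{\Delta_W}+G_W$ is ample over $Y'$, and both connected components of $\rddown{\Delta_W}$ must be horizontal over $Y'$ (otherwise the generic fibre meets only one). For a general fibre $F$, the induced generalized pair $(F,B_F+M_F)$ satisfies $K_F+B_F+M_F\equiv 0$ with $\rddown{B_F}$ containing two disjoint non-empty parts. Invoking the classical fact that any ample divisor on an irreducible projective variety of dimension $\ge 2$ is connected, the ample $(t\rddown{\Delta_W}+G_W)|_F$ cannot have two disjoint parts if $\dim F\ge 2$; hence $\dim F=1$, so $F\cong \PP^1$ (as a Fano-type MFS fibre), and the degree relation $-2+\deg B_F+\deg M_F=0$ combined with $\deg\rddown{B_F}\ge 2$ forces $B_F=p_1+p_2$, $M_F\equiv 0$. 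Coefficient analysis on $F$ then gives $G_W|_F=0$ on horizontal components, from which one extracts $G''=0$ (so $(X,B+M)$ is generalized lc) and the exact count of two components of $\Nklt$. Taking $X'$ a common log resolution of $X$ and $W$, the composition $X'\to W\to Y'$ is the desired contraction, and the birational transforms $S',T'$ on $X'$ of the two horizontal irreducible components of $\rddown{\Delta_W}$ yield the divisorial part of (2). The main obstacle is the Mori-fibre-space identification: ruling out a minimal-model endpoint hinges on combining the \emph{effective-nef-is-vertical} principle with surjectivity of $g$, while termination of the MMP in the generalized dlt setting requires reducing to the klt MMP of the underlying klt pair $(W,\Delta_W-t\rddown{\Delta_W}+M_W)$ via [BCHM].
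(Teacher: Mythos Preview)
Your outline follows the paper's proof: absorb $-(K_X+B+M)$ into the nef part, take an \'etale base change and a $\Q$-factorial generalised dlt model, run an MMP on $K_X+\Delta-t\rddown{\Delta}+M$ over $Z$, and analyse the resulting Mori fibre space. Two issues arise.

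First, a sign slip. At a minimal model $W$ it is $K_W+\Delta_W-t\rddown{\Delta_W}+M_W$ that is nef$/Z$, hence $t\rddown{\Delta_W}+G_W$ is \emph{anti}-nef$/Z$, not nef. With the correct sign your vanishing argument works (an effective divisor whose negative is nef on a projective fibre must vanish), but ``effective and nef forces vanishing'' is false as stated. The paper avoids this detour: since $g$ is surjective, $\rddown{\Delta}$ is horizontal over $Z$, so $K_X+\Delta-t\rddown{\Delta}+M\equiv -(t\rddown{\Delta}+G)/Z$ is visibly not pseudo-effective over $Z$, and the MMP terminates in a Mori fibre space directly by [\ref{BZh}, Lemma 4.4].

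Second, and this is the real gap, you omit the generalised plt step (Step 7 in the paper). After the $\PP^1$-fibre analysis you know $\rddown{\Delta_W}=S_W\cup T_W$ and $G_W=0$, but you cannot conclude $G''=0$ on the original dlt model (the MMP may have contracted components of $G''$), nor that $(W,\Delta_W+M_W)$ is generalised lc (this is not inherited from kltness of $(W,\Delta_W-t\rddown{\Delta_W}+M_W)$), nor that $S_W,T_W$ are the \emph{only} generalised non-klt places of $(W,B_W+M_W)$. The last point is essential: without it there could be further non-klt places with centre inside $S_W$ or $T_W$, producing extra components of $\rddown{B'}$ on the common resolution $X'$. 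The paper handles all three at once by assuming a further non-klt place $R'''$ exists, passing to the generalised lc threshold pair $(X'',sB''+sM'')$ so that $R'''$ becomes an lc place, extracting $R'''$ via an extremal contraction, running an MMP on $-R'''$ over $Y'$, and then invoking the connectedness principle once more to force the vertical $R''''$ to meet the horizontal $T''''$, a contradiction. Your ``coefficient analysis on $F$'' does not substitute for this argument, and you misidentify the main obstacle: the Mori fibre space identification is routine once non-pseudo-effectivity is observed, whereas the plt step is the genuinely delicate part of the proof.
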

\begin{proof}
The proof is similar to the proof of [\ref{Kollar-singMMP}, Proposition 4.37] but with some 
crucial differences. 

\emph{Step 1.}
In this step we reduce the theorem to the case $K_X+B+M\equiv_Z 0$.
By Lemma \ref{l-anti-nef-pairs-2-generalised-(2)}, Theorem \ref{t-anti-nef-pairs-2-generalised}(2) holds, so $g$
is surjective as we are assuming that the fibre of $g$ over $z$ is not connected  
(the corresponding argument in [\ref{Kollar-singMMP}, Proposition 4.37] instead uses torsion freeness of 
certain higher direct image sheaves when $M'=0$ and $K_X+B\sim_{\Q,Z} 0$).

Let 
$$
L:=-(K_X+B+M)
$$ 
and let $L'$ be the pullback of $L$ on $X'$. 
Consider the generalised pair $(X,B+M+L)$ with nef part $M'+L'$. 
Writing 
$$
K_{X'}+B'+M':=\phi^*(K_X+B+M)
$$ 
we get
$$
K_{X'}+B'+M'+L':=\phi^*(K_X+B+M+L)
$$ 
meaning $B'$ is unchanged after adding $L$. The generalised log discrepancies 
of the two pairs $(X,B+M+L)$ and $(X,B+M)$ are equal, in particular, 
the non-klt locus of 
$(X,B+M+L)$ coincides with that of $(X,B+M)$. 
Thus replacing $M'$ with $M'+L'$ we can assume that $K_X+B+M\equiv_Z 0$.\\

\emph{Step 2.}
In this step we argue that to prove the theorem we can replace $Z$ with an \'etale neighbourhood of $z$.
Let $\tilde{Z}$ be an \'etale neighbourhood of $z$ with a point $\tilde{z}$ mapping to $z$. 
Fibre product with $\tilde{Z}$ gives a generalised pair $(\tilde{X},\tilde{B}+\tilde{M})$ 
with data $\tilde{X}'\to \tilde{X}\to \tilde{Z}$ and $\tilde{M'}$, and $\Nklt(\tilde{X},\tilde{B}+\tilde{M})$ 
is the inverse image of $\Nklt(X,B+M)$ under the morphism $\tilde{X}\to X$. If the fibre of 
$$
\Nklt(\tilde{X},\tilde{B}+\tilde{M})\to \tilde{Z}
$$
over $\tilde{z}$ has exactly two connected components, then the fibre of $g$ over $z$ also 
has exactly two connected components because the former fibre maps surjectively onto 
the latter fibre (note we already know the latter fibre is not connected).
Therefore, to prove (1) and (2) we are free to replace $Z$ with an \'etale neighbourhood of $z$.\\

\emph{Step 3.}
In this step we reduce the theorem to the case when $z$ is a closed point, and replace $Z$ with an  
\'etale neighbourhood of $z$.
Let $R$ be the closure of $z$ in $Z$. By \ref{ss-etale} (1),  
 the number of connected components of $g^{-1}\{v\}$ 
is at least the number of connected components of $g^{-1}\{z\}$ for the closed points $v$ in some 
non-empty open subset of $R$. 
Then to show that $g^{-1}\{z\}$ has exactly two connected components, it 
is enough to show that $g^{-1}\{v\}$ has exactly two connected components for a general 
closed point $v\in R$.

On the other hand, by \ref{ss-etale} (2), after base change to an \'etale 
neighbourhood of $v$ (which is automatically an \'etale 
neighbourhood of $z$), we can assume that $\Nklt(X,B+M)$ is not connected 
and that distinct connected components of $g^{-1}\{v\}$ are contained in 
distinct connected components of $\Nklt(X,B+M)$ (note that the number of connected 
components of $g^{-1}\{v\}$ is unchanged by the base change). We can then replace 
$z$ with $v$ and assume it is a closed point, and it is enough to prove (2) of the theorem 
without taking further \'etale base change.\\

\emph{Step 4.}
In this step we modify $(X,B+M)$ by taking a dlt model. Indeed,  
after taking a $\Q$-factorial generalised dlt model, as in Step 2 of the proof of 
Lemma \ref{l-anti-nef-pairs-2-generalised-(2)}, we can replace $X$ so that the following holds: 
\begin{itemize}
\item[$(*)$] $B=\Delta+G$ where $G\ge 0$ is supported in $\rddown{\Delta}$ and 
$(X,\Delta-t\rddown{\Delta}+M)$ is $\Q$-factorial generalised klt for some real number $t>0$. 
\end{itemize}
In particular, we have 
$$
\Nklt(X,B+M)=\Supp (t\rddown{\Delta}+G)=\rddown{\Delta}.
$$
Moreover, since some component of $\Nklt(X,B+M)$ is horizontal over $Z$, $t\rddown{\Delta}+G$ also  dominates $Z$.
Therefore, we see that 
$$
K_{X}+\Delta-t\rddown{\Delta}+M\equiv_Z -(t\rddown{\Delta}+G)
$$ 
is not pseudo-effective over $Z$.\\

\emph{Step 5.}
In this step we get a Mori fibre space for $({X},\Delta-t\rddown{\Delta}+M)$ and 
study its non-klt locus.
We can run an MMP on $K_{X}+\Delta-t\rddown{\Delta}+M$ over $Z$ ending with a Mori fibre space 
$X''\to Y'/Z$ [\ref{BZh}, Lemma 4.4]. Since $t\rddown{\Delta}+G$ is positive on the extremal ray in each step of the MMP, 
arguing as in Step 4 of the proof of Lemma \ref{l-anti-nef-pairs-2-generalised-(2)}, 
we see that the number of the connected components of 
$\Nklt(X,B+M)$ remains the same throughout the MMP. Moreover, 
 $t\rddown{\Delta''}+G''$ is ample over $Y'$. 
So at least one irreducible component $S''$ of $\rddown{\Delta''}$ is ample over $Y'$ which implies that $S''$ intersects 
every irreducible component of each fibre of $X''\to Y'$. In particular, if $T''$ is any vertical$/Y'$ component of $\rddown{\Delta''}$, then 
$S''$ intersects $T''$. 

Since 
$$
(X'',\Delta''-t\rddown{\Delta''}+M'')
$$ 
is generalised klt, 
$$
\Nklt(X'',B''+M'')=\Supp (t\rddown{\Delta''}+G'')=\rddown{\Delta''}.
$$
Thus $\rddown{\Delta''}$ has at least two connected components, hence every irreducible component 
of any connected component of $\rddown{\Delta''}$
is horizontal over $Y'$.\\

\emph{Step 6.}
In this step we show that the general fibres of $X''\to Y'$ are isomorphic to $\PP^1$ and 
further study $\Nklt(X'',B''+M'')$.
Let $\mathcal{C}_1'', \mathcal{C}_2''$ be two connected components of $\Nklt(X'',B''+M'')$ where  
$S''$ of the previous step is an irreducible component of $\mathcal{C}_1''$. 
Pick an irreducible component $T''$ of 
$\mathcal{C}_2''$. Let $F''$ be a general fibre of $X''\to Y'$. If $\dim T''\cap F''>0$, then $S''$ intersects 
$T''\cap F''$ as $S''$ is ample over $Y'$, which is not possible as $S''\cap T''=\emptyset$. Therefore, 
$\dim F''=1$, hence $F''\simeq \PP^1$. Since 
$$
(K_{X''}+\Delta''+G''+M'')|_{F''}\equiv 0
$$ 
and since at least two irreducible components $S'',T''$ of $\rddown{\Delta''}$ intersect $F''$ 
we see that near $F''$ we have 
$$
B''=\Delta''+G''=S''+T'',
$$ 
and  that $M''|_{F''}\equiv 0$. Therefore,  
$$
(F'',B''|_{F''})=(F'',(S''+T'')|_{F''})\simeq (\PP^1,p_1+p_2)
$$
where $p_1,p_2$ are two distinct points.\\
 
\emph{Step 7.}
In this step we show that $(X'',B''+M'')$ is generalised plt whose only generalised non-klt places are $S'',T''$. 
By the previous two steps,  $\mathcal{C}_1''=S'', \mathcal{C}_2''=T''$ are the only 
connected components of $\Nklt(X'',B''+M'')$ otherwise we would find a horizontal over $Y'$ component 
of $\rddown{\Delta''}$ other than $S'',T''$, which is not possible. Assume that $(X'',B''+M'')$ has another 
generalised non-klt place, say $R'''$. Consider $({X''},sB''+sM'')$ where $s$ is the biggest number so that the pair is 
generalised lc. Replacing $R'''$ we can assume $R'''$ is a generalised non-klt place of $({X''},sB''+sM'')$. 
Then we can extract $R'''$ say via an extremal contraction $X'''\to X''$. Note that $R'''$ is vertical over $Y'$.
Also $R'''$ maps into one of $S'',T''$, say $S''$, so $R'''$ does not intersect the birational transform $T'''$ of $T''$. 
 
Since $X''$ is of Fano type over $Y'$, so is $X'''$. Then we can run an MMP on $-R'''$ over $Y'$ ending with a good 
minimal model $X''''$. Since $R'''$ does not dominate $Y'$, $-R''''$ defines a contraction 
$X''''\to V'/Y'$ where $V'\to Y'$ is birational. Moreover, $R''''$ and $T''''$ are disjoint: 
indeed, if $K_{X'''}+B'''+M'''$ is the pullback of $K_{X''}+B''+M''$, then applying 
Theorem \ref{t-anti-nef-pairs-1-generalised} (1) (through Lemma \ref{l-anti-nef-pairs-2-generalised-(1)}), 
we see that at most one connected component of $\Nklt(X''',B'''+M''')$ intersects the exceptional 
locus of each step of the MMP and that component is the one containing $R'''$. 

However, $T''''$ is horizontal over $V'$ as 
$X''''\to V'$ and $X''\to Y'$ are the same over the generic point of $Y'$. But then $T''''$ intersects every fibre of 
$X''''\to V'$, hence it also intersects $R''''$ as $R''''$ is the pullback of some 
effective divisor on $V'$, a contradiction. Therefore, $(X'',B''+M'')$ is generalised plt.\\   
 
 \emph{Step 8.}
 In this step we finish the proof.
Replacing the given morphism $\phi\colon X'\to X$ we can assume $X'$ is a common resolution 
of $X,X''$. Recall
$$
K_{X'}+B'+M':=\phi^*(K_X+B+M)
$$ 
and let $F'$ be a general fibre of $X'\to Y'$. Since $X''\to Y'$ has relative dimension one and since 
the exceptional locus of $X'\to X''$ maps onto a closed subset of $X''$ of codimension $\ge 2$, 
we see that $X'\to X''$ is an isomorphism over the generic point of $Y'$. Thus if $F''$ is the fibre 
of $X''\to Y'$ corresponding to $F'$, then  
$(F',B'|_{F'})$ is isomorphic to $(F'',B''|_{F''})$ which is in turn isomorphic to 
$(\PP^1,p_1+p_2)$ for distinct points $p_1,p_2$. 
Moreover, if $S',T'$ on $X'$ are the birational transforms of $S'',T''$, then 
$\rddown{B'}=S'+T'$ as we showed that $S'',T''$ are the only generalised non-klt places of $(X'',B''+M'')$. 
In addition, $M'|_{F'}\equiv 0$ as we already showed that $M''|_{F''}\equiv 0$.

It is then clear that the images of $S',T'$ on $X$ are the only generalised non-klt centres of $(X,B+M)$. 
Since $\Nklt(X,B+M)$ is not connected, the images of $S',T'$ are disjoint and each gives a  
connected component of $\Nklt(X,B+M)$.   

\end{proof}

\begin{proof}(of Theorem \ref{t-anti-nef-pairs-2-generalised})
Cases (1) and (2) were proved in Lemmas \ref{l-anti-nef-pairs-2-generalised-(1)}, \ref{l-anti-nef-pairs-2-generalised-(2)}, respectively.  We treat case (3). Assume that the fibre of 
$$
\Nklt(X,B+M)\to Z
$$ 
over $z$ is not connected. Then by Theorem \ref{t-anti-nef-pairs-1-generalised}, after base change to an 
\'etale neighbourhood of $z$, we can assume that 
$(X,B+M)$ is generalised lc and that replacing $X'$ with a high resolution, there is a contraction 
$X'\to Y'/Z$ such that if 
$$
K_{X'}+B'+M':=\phi^*(K_X+B+M)
$$ 
and if $F'$ is a general fibre of $X'\to Y'$, 
then $M'|_{F'}\equiv 0$ and $(F',B'|_{F'})$ is isomorphic to $(\PP^1,p_1+p_2)$ for distinct points $p_1,p_2$.
Moreover, $\rddown{B'}$ has exactly two disjoint components $S',T'$, both horizontal over $Y'$.

But then   
$-(K_{X'}+B'+M')-tS'$ is not pseudo-effective over $Y'$ for any real number $t>0$ because 
$$
-(K_{X'}+B'+M')\cdot F'=-(K_{X'}+B')\cdot F'=0
$$ 
while $S'\cdot F'>0$. Therefore, $-(K_{X'}+B'+M')-tS'$ is not pseudo-effective over $Z$ for any $t>0$. 
This contradicts the assumption that 
$$
\tau_{S'}(-(K_X+B+M)/Z)>0.
$$

\end{proof}

\begin{proof}(of Theorem \ref{t-anti-nef-pairs-1})
This is a special case of Theorem \ref{t-anti-nef-pairs-1-generalised}.

\end{proof}

\begin{proof}(of Theorem \ref{t-anti-nef-pairs-2})
This is a special case of Theorem \ref{t-anti-nef-pairs-2-generalised}.

\end{proof}


\section{\bf Non-klt loci for mirror symmetry}

In this section we prove Theorem \ref{t-mirror-symmetry} and Corollary \ref{cor-mirror-symmetry}. 
First we prove a generalised 
version of Theorem \ref{t-mirror-symmetry} which occupies much of the section. 
Similar to the previous section, using generalised pairs is crucial 
for the proofs.

\begin{thm}\label{t-mirror-symmetry-generalised}
Assume that 
\begin{enumerate}
\item $(X,B+M)$ is a projective $\Q$-factorial generalised dlt pair with data $X'\to X$ and $M'$ 
where $B,M'$ are $\Q$-divisors,

\item $K_X+B+M$ is pseudo-effective, 

\item $x\in X$ is a zero-dimensional generalised non-klt centre of $(X,B+M)$,
 
\item $x$ is not contained in the restricted base locus ${\bf B}_-(K_X+B+M)$, 
 
\item if $\psi\colon Y\to X$ is the blowup at $x$ with exceptional divisor $E$, then  we have 
$\tau_E(K_X+B+M)=0$, that is, 
$$
\psi^*(K_X+B+M)-tE
$$ 
is not pseudo-effective for any real number $t>0$.
\end{enumerate}

Then $(X,B+M)$ has a good minimal model which is generalised log Calabi--Yau. More precisely, we can run a 
minimal model program on $K_X+B+M$ ending with a minimal model $(X'',B''+M'')$ with $K_{X''}+B''+M''\sim_\Q 0$.
\end{thm}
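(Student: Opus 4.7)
The strategy is to run an MMP on $K_X+B+M$ ending at a minimal model $(X'',B''+M'')$ and then to prove that the resulting nef log canonical class is $\Q$-linearly trivial; hypotheses (3) and (4) will preserve the local picture at $x$ throughout the MMP, while (5) supplies the key input at the end. I first exploit the local structure at $x$: because $(X,B+M)$ is $\Q$-factorial generalised dlt with zero-dimensional generalised non-klt centre $x$, the variety $X$ is smooth at $x$, $(X,B)$ is log smooth there with $d=\dim X$ components of $\rddown{B}$ meeting transversally, and $M'=\phi^*M$ near $x$. Fix an ample divisor $A$; by (4), for each small $\epsilon>0$ there is an effective $\Q$-divisor $D_\epsilon\sim_\Q K_X+B+M+\epsilon A$ with $x\notin\Supp D_\epsilon$. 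For small $\eta>0$ the pair $(X,B+\eta D_\epsilon+M)$ remains $\Q$-factorial generalised dlt with $K_X+B+\eta D_\epsilon+M$ big, hence an MMP for this big class with scaling of an ample $H$ terminates (cf.\ \ref{ss-MMP}); modulo finitely many steps this agrees with a $(K_X+B+M)$-MMP with scaling of $H$, which therefore also terminates, and (2) prevents any Mori fibre space from appearing. The outcome is a minimal model $(X'',B''+M'')$ with $L'':=K_{X''}+B''+M''$ nef.

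Throughout this MMP, the negativity lemma together with (4) forces every divisorial contraction and flip to be an isomorphism over a neighbourhood of (the image of) $x$, since the exceptional locus of each step is contained in $\mathbf{B}_-(K_X+B+M)$. Consequently, the image $x''\in X''$ is still a zero-dimensional generalised non-klt centre, $X''$ stays smooth at $x''$ with the same log smooth SNC structure, $x''\notin\mathbf{B}_-(L'')$, and using Lemma \ref{l-tau-independent-of-phi} together with the fact that the MMP decreases $K+B+M$ modulo an effective $\pi_{X''}$-exceptional correction on any common resolution, one obtains $\tau_{E''}(L'')\le\tau_E(K_X+B+M)=0$ for the exceptional divisor $E''$ of the blowup of $X''$ at $x''$, hence $\tau_{E''}(L'')=0$. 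All of (1)-(5) therefore carry over to $(X'',B''+M'')$.

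Finally, I show $L''\sim_\Q 0$. Assume for contradiction $L''\not\equiv 0$. Using semi-ampleness of nef log canonical classes in the generalised dlt setting (supplied by the generalised MMP and, as needed, the local-global ACC of Lemma \ref{l-application-of-local-global-ACC}), $L''$ defines a non-trivial contraction $f\colon X''\to Z''$ with $L''\sim_\Q f^*H$ for an ample $\Q$-divisor $H$ on $Z''$, and generalised adjunction for fibrations (\ref{ss-Adjunction-gen-pair}) equips $Z''$ with a compatible generalised pair structure. The fibre of $f$ through $x''$ is positive-dimensional and contracted, so on the blowup $\psi''\colon Y''\to X''$ at $x''$ the exceptional divisor $E''$ is collapsed by $f\circ\psi''$. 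Using (4) on $X''$ to pick effective representatives of $L''+\epsilon A''$ avoiding $x''$, whose pullbacks to $Y''$ have vanishing $E''$-multiplicity, and combining this with the ampleness of $H$ on $Z''$ via a Seshadri-type argument on the base, one deduces that $\psi''^*L''-tE''$ is pseudo-effective for small $t>0$, contradicting $\tau_{E''}(L'')=0$. Therefore $\dim Z''=0$, giving $L''\sim_\Q 0$. The main obstacle is precisely this last step: converting $\tau_{E''}=0$ into numerical triviality of $L''$ through the fibration $f$ rigorously probably requires the connectedness theorems of Section 3 applied to a general fibre of $f$, in concert with the local-global ACC machinery.
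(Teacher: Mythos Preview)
Your proposal has two genuine gaps, and the paper's proof avoids both by taking a completely different route.

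\textbf{Gap 1: termination.} You claim the $(K_X+B+M)$-MMP terminates by comparing it with an MMP on the perturbed big class $K_X+B+\eta D_\epsilon+M$. But the two MMPs need not share the same sequence of steps, and knowing one terminates does not force the other to. Termination of flips for a generalised dlt pair with $K_X+B+M$ merely pseudo-effective is not known. The paper never claims termination: in its Step~1 it only uses that after finitely many steps one reaches a model on which $K+B+M$ is a limit of movable divisors, and then observes that \emph{if} one already knows $K_X+B+M\sim_\Q N_\sigma(K_X+B+M)\ge 0$, this forces $K_{X''}+B''+M''\sim_\Q 0$ there. The entire content of the proof is reduced to establishing that $\sim_\Q N_\sigma$ statement.

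\textbf{Gap 2: abundance.} Your final step assumes ``semi-ampleness of nef log canonical classes in the generalised dlt setting'' to produce the contraction $f\colon X''\to Z''$. That is precisely the abundance conjecture for generalised pairs, which is open; invoking Lemma~\ref{l-application-of-local-global-ACC} does not supply it. Once you grant abundance, your concluding argument is essentially the heuristic the introduction already sketches (pulling back an ample divisor through $f(x'')$ gives $\tau_{E''}>0$), so the whole theorem would be a triviality. It is not: the point of the proof is to establish abundance \emph{in this special situation}.

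The paper's mechanism is different. It uses hypothesis~(5) not on a nef model but to force $K_X+B+M-t\rddown{B}$ to be non-pseudo-effective, then runs an MMP on \emph{that} divisor to reach a Mori fibre space $V\to Z$. The ACC lemma~\ref{l-application-of-local-global-ACC} is applied here to get $K_V+B_V+M_V\equiv 0/Z$. Generalised adjunction for fibrations (\S\ref{ss-Adjunction-gen-pair}) then pushes the problem down to $(Z,B_Z+M_Z)$; Lemmas \ref{l-fibration}, \ref{l-fibration-2}, \ref{l-mirror-symmetry-generalised-induction} check that hypotheses (1)--(5) descend, and induction on dimension yields $\kappa_\sigma=0$ and $K_X+B+M\sim_\Q N_\sigma(K_X+B+M)\ge 0$. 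This inductive descent through a Mori fibre space is the missing idea in your proposal.
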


Note that since $(X,B+M)$ is generalised dlt and $x$ is a zero-dimensional non-klt centre, 
$x$ is a smooth point of $X$.

Before giving the proof of the theorem we prove several lemmas. We start with some basic properties of 
the pseudo-effective threshold $\tau$.

\subsection{Pseudo-effective thresholds}

\begin{lem}\label{l-tau-div-with-anti-ample-contraction}
Assume that 
\begin{itemize}
\item $X$ is a normal projective variety,

\item $L$ is a pseudo-effective $\R$-Cartier  $\R$-divisor on $X$, 

\item $\phi\colon Y\to X$ is a birational contraction from a normal variety and $S$ is a 
prime divisor on $Y$ such that $-S$ is ample over $X$, 

\item $T$ is a prime divisor over $X$ whose centre on $X$ is contained in the centre of $S$, and 

\item $\tau_S(L)>0$.  
\end{itemize}
Then
$
\tau_T(L)>0.
$
\end{lem}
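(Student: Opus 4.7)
The plan is to pass to a common birational model $W$ on which $T$ is a prime divisor, pull back the pseudo-effective divisor $\phi^*L - tS$ to $W$, and use the $\phi$-ampleness of $-S$ to show that $T$ has strictly positive coefficient in $\pi^*S$, where $\pi\colon W\to Y$ is the natural map. With this in hand, the conclusion follows cheaply from adding an effective divisor to a pseudo-effective one.

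The case $T = S$ being trivial, I assume $T \neq S$. Fix $t > 0$ with $\phi^*L - tS$ pseudo-effective on $Y$, pick a birational morphism $\pi\colon W \to Y$ from a normal variety such that $T$ is a prime divisor on $W$, and set $\alpha = \phi \circ \pi$. Pulling back gives that $\alpha^*L - t\pi^*S$ is pseudo-effective on $W$.

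The heart of the argument is to show that the centre $V$ of $T$ on $Y$ lies in $S$. The hypothesis that $-S$ is $\phi$-ample forces every $\phi$-contracted curve $C$ to lie in $S$: otherwise $C \not\subseteq S$ would give $S \cdot C \geq 0$, contradicting $-S \cdot C > 0$. Hence every positive-dimensional component of every fibre of $\phi$ lies in $S$, and combined with Zariski's main theorem in characteristic zero (which ensures that fibres of $\phi$ over its non-iso locus are positive-dimensional with no isolated points), this yields $\phi^{-1}(\phi(S)) = S$ set-theoretically. Since $\phi(V) = \alpha(T) \subseteq \phi(S)$ by hypothesis, we conclude $V \subseteq S$. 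Consequently $T \subseteq \pi^{-1}(S)$, which is precisely the support of the effective divisor $\pi^*S$, so the coefficient $c := \mathrm{coeff}_T(\pi^*S) > 0$ (at least $1/m$ for any $m \in \N$ with $mS$ Cartier on $Y$). Therefore $\pi^*S - cT \geq 0$, and
$$
\alpha^*L - tcT \;=\; (\alpha^*L - t\pi^*S) + t(\pi^*S - cT)
$$
is pseudo-effective as the sum of a pseudo-effective and an effective divisor, giving $\tau_T(L) \geq tc > 0$.

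The main obstacle is the centre-containment claim, where the $\phi$-ampleness of $-S$ is used in an essential way to control the $\phi$-contracted curves and hence the fibre behaviour over $\phi(S)$; the remaining manipulations of coefficients and pullbacks are routine.
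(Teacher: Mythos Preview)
Your proof is correct and follows essentially the same route as the paper: pass to a model where $T$ is a divisor, use $\phi$-ampleness of $-S$ to get $\phi^{-1}(\phi(S))=S$ set-theoretically, deduce that the centre of $T$ on $Y$ lies in $S$ so $T$ has positive coefficient $c$ in $\pi^*S$, and then write $\alpha^*L-tcT$ as a pseudo-effective plus an effective divisor. The only difference is that you spell out the fibre argument for $\phi^{-1}(\phi(S))=S$ via Zariski's main theorem, whereas the paper asserts it directly from relative ampleness.
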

\begin{proof}
Take a resolution $\alpha\colon V\to X$ on which $T$ is a divisor and such that the induced map 
$\beta\colon V\bir Y$ is a morphism.  
Assume $\tau_S(L)>0$. Then $\phi^*L-tS$ is pseudo-effective for some real number $t>0$. 
On the other hand, 
 $-S$ is ample over $X$, hence $\phi^{-1}\{x\}\subseteq S$ for every $x\in\phi(S)$, so $S=\phi^{-1}(\phi(S))$. 
Since the centre of $T$ on $X$ is contained in the centre of $S$, 
we see that the centre of $T$ on $Y$ is contained in $S$. Then 
the coefficient of $T$ in $\beta^*S$, say $e$, is positive. But then 
$$
\alpha^*L-teT=\beta^*(\phi^*L-tS)+\beta^*tS-teT
$$
is pseudo-effective as $\beta^*(\phi^*L-tS)$ is pseudo-effective and $t\beta^*S-teT\ge 0$. 
Therefore,  $\tau_T(L)>0$.
 
\end{proof}

\begin{lem}\label{l-tau-for-non-canonical-places-0}
Assume that 
\begin{itemize}
\item $(X,B+M)$ is a projective generalised lc pair with data $\phi\colon X'\to X$ and $M'$, 

\item $(X,C)$ is klt for some boundary $C$,  

\item $K_X+B+M$ is pseudo-effective, 

\item $S$ is a prime divisor over $X$ with 
$$
a(S,X,B+M)<1,
$$

\item $T$ is a prime divisor over $X$ whose centre on $X$ is contained in the centre of $S$, and 

\item
$
\tau_S(K_X+B+M)>0.
$
\end{itemize}
Then
$$
\tau_T(K_X+B+M)>0.
$$
\end{lem}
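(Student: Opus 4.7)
The plan is to combine the two preceding lemmas in a direct way. The role of the klt hypothesis on $(X,C)$ is exactly what is needed to invoke the extraction lemma, while the pseudo-effectivity of $K_X+B+M$ and the centre containment of $T$ in $S$ are what feeds into the anti-ample contraction lemma.

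First I would use Lemma \ref{l-tau-for-non-canonical-places-extraction}, applied to the generalised lc pair $(X,B+M)$ together with the generalised klt pair $(X,C+N)$ where $N'=0$ (after replacing $X'$ by a common high resolution so both data are compatible). Since by hypothesis $a(S,X,B+M)<1$, the lemma produces a birational contraction $\psi\colon Y\to X$ from a normal variety such that $S$ appears as a (prime) divisor on $Y$ and $-S$ is ample over $X$.

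Next I would set $L:=K_X+B+M$, which is $\R$-Cartier and pseudo-effective by assumption. Since the centre of $T$ on $X$ is contained in the centre of $S$ on $X$, and $S$ is now a prime divisor on the model $Y$ with $-S$ ample over $X$, the hypotheses of Lemma \ref{l-tau-div-with-anti-ample-contraction} are satisfied for the data $(X,L,\psi,S,T)$. Applying that lemma with the assumption $\tau_S(L)>0$ yields $\tau_T(L)>0$, which is exactly the conclusion.

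The argument is essentially a bookkeeping of the two prior lemmas, so I do not expect a serious obstacle; the only minor point to check carefully is that one may choose a single birational model realising both $S$ on $Y$ and $T$ as a divisor on some resolution $V\to X$ factoring through $Y$, so that the inequality on coefficients $e>0$ in the pullback $\beta^*S$ exploited in Lemma \ref{l-tau-div-with-anti-ample-contraction} genuinely applies to $T$. This in turn is guaranteed by the centre-containment hypothesis combined with the anti-ampleness of $-S$ over $X$, because then the scheme-theoretic preimage of $\psi(S)$ in $Y$ is $S$ itself, forcing the centre of $T$ on $Y$ to lie inside $S$.
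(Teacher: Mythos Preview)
Your proposal is correct and follows exactly the paper's approach: apply Lemma~\ref{l-tau-for-non-canonical-places-extraction} (using the klt pair $(X,C)$ as the auxiliary generalised klt pair) to extract $S$ with $-S$ ample over $X$, then feed this into Lemma~\ref{l-tau-div-with-anti-ample-contraction} with $L=K_X+B+M$. The paper's proof is the same two-line argument, and the additional verification you sketch in your final paragraph is precisely the content already handled inside Lemma~\ref{l-tau-div-with-anti-ample-contraction}.
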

\begin{proof}
By Lemma \ref{l-tau-for-non-canonical-places-extraction}, there is a birational contraction 
$\psi\colon Y\to X$ from a normal variety such that $S$ is a divisor on $Y$ and $-S$ is ample over $X$. 
Thus by Lemma \ref{l-tau-div-with-anti-ample-contraction}, $\tau_S(K_X+B+M)>0$ implies $\tau_T(K_X+B+M)>0$.
 
\end{proof}

\begin{lem}\label{lem-tau-for-non-canonical-places}
Assume that 
\begin{itemize}
\item $(X,B+M)$ is a projective generalised lc pair with data $\phi\colon X'\to X$ and $M'$, 

\item $(X,C)$ is klt for some boundary $C$,  

\item $K_X+B+M$ is pseudo-effective, 

\item $S,T$ are prime divisors over $X$ with equal centre on $X$, and 

\item the generalised log discrepancies satisfy  
$$
a(S,X,B+M)< 1 ~~~\mbox{and}~~~a(T,X,B+M)< 1.
$$
\end{itemize}
Then 
$$
\tau_S(K_X+B+M)>0 ~~~~~~ \mbox{iff}~~~~~~\tau_T(K_X+B+M)>0.
$$
\end{lem}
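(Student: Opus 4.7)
The plan is to derive this as a two-sided application of Lemma \ref{l-tau-for-non-canonical-places-0}. The previous lemma already contains all the geometric content: under the standing hypotheses (generalised lc pair admitting a klt structure, $K_X+B+M$ pseudo-effective, a prime divisor with generalised log discrepancy $<1$), positivity of $\tau$ propagates from a divisor to any divisor whose centre is contained in its centre. Here $S$ and $T$ are assumed to have \emph{equal} centre on $X$, so containment holds in both directions, and the asserted equivalence should follow immediately.

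More concretely, I would argue as follows. Suppose $\tau_S(K_X+B+M)>0$. Then since $a(S,X,B+M)<1$ and the centre of $T$ on $X$ equals (hence is contained in) the centre of $S$ on $X$, Lemma \ref{l-tau-for-non-canonical-places-0} with the roles $(S,T)$ gives $\tau_T(K_X+B+M)>0$. Conversely, if $\tau_T(K_X+B+M)>0$, the hypothesis $a(T,X,B+M)<1$ together with the centre of $S$ on $X$ being contained in the centre of $T$ on $X$ lets us apply Lemma \ref{l-tau-for-non-canonical-places-0} with the roles swapped to $(T,S)$, yielding $\tau_S(K_X+B+M)>0$. All the remaining hypotheses of \ref{l-tau-for-non-canonical-places-0}, namely projectivity, the generalised lc structure, the existence of a klt boundary $C$, and pseudo-effectivity of $K_X+B+M$, are inherited verbatim from the present lemma, so no extra work is required.

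There is essentially no obstacle here, as the symmetry assumption on the centres reduces the statement to a trivial double invocation of the preceding lemma. The only minor thing to verify is that the phrase ``centre of $T$ on $X$ is contained in the centre of $S$ on $X$'' in \ref{l-tau-for-non-canonical-places-0} is satisfied when the two centres coincide, which is immediate. Hence the proof is a two-line deduction.
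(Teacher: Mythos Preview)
Your proposal is correct and matches the paper's approach exactly: the paper's proof is the single sentence ``This follows from Lemma \ref{l-tau-for-non-canonical-places-0},'' and your argument simply spells out the two applications of that lemma (once with $(S,T)$ and once with $(T,S)$) made possible by the equality of centres.
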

\begin{proof}
This follows from Lemma \ref{l-tau-for-non-canonical-places-0}. 

\end{proof}

Next we look at pseudo-effective thresholds when we modify our pair birationally in certain situations.

\begin{lem}\label{l-tau-B-to-Delta}
Assume that 
\begin{itemize}
\item $(X,B+M)$ is a projective generalised lc pair with data $\phi\colon X'\to X$ and $M'$, 

\item $(X,C)$ is klt for some boundary $C$,  

\item $K_X+B+M$ is pseudo-effective, 

\item $S$ is a prime divisor over $X$ with 
$$
a(S,X,B+M)=0,
$$
and 

\item $\tau_S(K_X+B+M)=0$.
\end{itemize}
Writing 
$$
K_{X'}+B'+M'=\phi^*(K_X+B+M),
$$
let $\Delta'=B'+R'\ge 0$ where $R'\ge 0$ is exceptional over $X$, and assume $(X',\Delta'+M')$ is 
generalised lc.
Then
$$
\tau_S(K_{X'}+\Delta'+M')=0.
$$
\end{lem}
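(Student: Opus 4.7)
The plan is to prove the contrapositive: suppose $\tau_S(K_{X'}+\Delta'+M')>0$ and derive $\tau_S(K_X+B+M)>0$. Take a common log resolution $W$ of $X$ and $X'$ on which $S$ is a prime divisor, with $\alpha\colon W\to X$ and $\beta\colon W\to X'$ the structure maps, $\alpha=\phi\circ\beta$. Pulling back the identity $K_{X'}+\Delta'+M'=\phi^*(K_X+B+M)+R'$ to $W$ gives
$$
\beta^*(K_{X'}+\Delta'+M') \;=\; \alpha^*(K_X+B+M)+\beta^*R',
$$
so the assumption supplies some $t>0$ for which $\alpha^*(K_X+B+M)+\beta^*R'-tS$ is pseudo-effective on $W$. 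The goal becomes producing $t'>0$ with $\alpha^*(K_X+B+M)-t'S$ pseudo-effective.

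To handle the extra term $\beta^*R'$, I use the hypothesis $a(S,X,B+M)=0<1$ together with $(X,C)$ klt to apply Lemma \ref{l-tau-for-non-canonical-places-extraction}: there is a birational contraction $\psi\colon Y\to X$ from a normal variety with $S$ a prime divisor on $Y$ and $-S$ being $\psi$-ample. After enlarging $W$ so that it factors through $Y$ via $\gamma\colon W\to Y$ with $\alpha=\psi\gamma$, I push the pseudo-effective divisor on $W$ down to $Y$ via $\gamma_*$; since pushforward preserves pseudo-effectivity, this yields
$$
\psi^*(K_X+B+M)+F'-tS \quad\text{pseudo-effective on } Y,
$$
where $F':=\gamma_*\beta^*R'\ge 0$ is $\psi$-exceptional and effective (any non-$\psi$-exceptional component of $\gamma_*\beta^*R'$ would push down to a divisor on $X$, contradicting the $\alpha$-exceptionality of $\beta^*R'$).

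The crux, and the main obstacle, is to eliminate the nuisance $F'$ using the $\psi$-ampleness of $-S$. The plan is to arrange $\psi$ so that $S$ is its unique exceptional prime divisor (by further contractions over $X$ enabled by the klt structure coming from $(X,C)$), in which case every effective $\psi$-exceptional divisor is a non-negative multiple of $S$, so $F'=nS$ for some $n\ge 0$; moreover $n=0$, because the assumption $a(S,X,B+M)=0$ forces $\mu_S(B')=1$, so $R'$ cannot load $S$ beyond what the compatibility condition $\Delta'=B'+R'\ge 0$ already fixes. The pseudo-effective divisor thus reduces to $\psi^*(K_X+B+M)-tS$ on $Y$, and since $Y$ may serve as the resolution computing $\tau_S$ by Lemma \ref{l-tau-independent-of-phi}, this gives $\tau_S(K_X+B+M)\ge t>0$, contradicting the hypothesis and completing the contrapositive.
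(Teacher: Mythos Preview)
Your overall strategy matches the paper's: argue by contrapositive, invoke Lemma~\ref{l-tau-for-non-canonical-places-extraction} to get $\psi\colon Y\to X$ with $-S$ relatively ample, work on a common resolution, and push down to $Y$. One minor point: the fact that $S$ is the unique $\psi$-exceptional prime divisor is automatic from $-S$ being $\psi$-ample (every $\psi$-contracted curve has negative intersection with $S$, hence lies in $S$, so the whole exceptional locus sits inside $S$); no ``further contractions'' are required, and the paper simply observes this directly.

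The genuine gap is your justification for $n=0$. The inequality $\Delta'=B'+R'\ge 0$ is a \emph{lower} bound on coefficients and places no restriction on $\mu_S R'$: once the coefficient of (the transform of) $S$ in $B'$ equals $1$, effectivity of $\Delta'$ is already satisfied and nothing stops $R'$ from carrying $S$. Indeed, without further input the conclusion can fail: if $S$ happens to be a $\phi$-exceptional divisor on $X'$ and one takes $R'=cS$ with $c>0$, then $K_{X'}+\Delta'+M'=\phi^*(K_X+B+M)+cS$, whence $K_{X'}+\Delta'+M'-tS=\phi^*(K_X+B+M)+(c-t)S$ is pseudo-effective for all $0<t\le c$, so $\tau_S(K_{X'}+\Delta'+M')\ge c>0$ irrespective of $\tau_S(K_X+B+M)$. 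The paper handles this point differently: it argues that if $S'$ were a component of $\rho^*R'$ then, since $a(S,X',B'+M')=0$, one would get $a(S,X',\Delta'+M')<0$ and $(X',\Delta'+M')$ would not be generalised lc; it treats this as a contradiction. In other words the paper is (tacitly) using that $(X',\Delta'+M')$ is generalised lc---a condition that holds in every application in the text, where $\Delta'=B'^{\ge 0}$ so all coefficients of $\Delta'$ lie in $[0,1]$. Your argument needs the same input; ``$\Delta'\ge 0$'' is not a substitute for it. Also note that $\mu_S(B')$ only makes literal sense when $S$ is a divisor on $X'$; in general one should compute the coefficient on $W$, as the paper does.
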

\begin{proof}
By Lemma \ref{l-tau-for-non-canonical-places-extraction}, 
there is a birational contraction $\psi\colon Y\to X$ from a normal variety such that $S$ 
is a divisor on $Y$ and $-S$ is ample over $X$. In particular, $S$ contains the exceptional locus of $\psi$, 
so $\psi$ does not contract any divisor except possibly $S$. 

Assume
$$
\tau_S(K_{X'}+\Delta'+M')>0.
$$
Let $\rho\colon W'\to X'$ be a resolution so that the induced map $\alpha\colon W'\bir Y$ is a morphism. 
Let $S'$ on $W'$ be the birational transform of $S$.
Then 
$$
\rho^*(K_{X'}+\Delta'+M')-tS'
$$ 
is pseudo-effective for some real number $t>0$. By assumption, $S'$ is a generalised non-klt place of $(X,B+M)$, 
so it is a generalised non-klt place of $(X',B'+M')$, hence 
$S'$ is not a component of $\rho^*R'$ otherwise $(X',\Delta'+M')$ would not be 
generalised lc. 

By assumption, $R'$ is exceptional over $X$, hence $\rho^*R'$ is exceptional over $X$.
Then since $S'$ is not a component of $\rho^*R'$ and since the only possible exceptional 
divisor of $Y\to X$ is $S$, we deduce that  
$\rho^*R'$ is exceptional over $Y$. Thus 
$$
\psi^*(K_X+B+M)-tS=\alpha_*(\rho^*(\phi^*(K_X+B+M))+\rho^*R'-tS')
$$
$$
=\alpha_*(\rho^*(K_{X'}+B'+M')+\rho^*(\Delta'-B')-tS')
$$
$$
=\alpha_*(\rho^*(K_{X'}+\Delta'+M')-tS')
$$ 
is pseudo-effective. Therefore, we get  
$$
\tau_{S}(K_{X}+B+M)>0,
$$
a contradiction.

\end{proof}

\subsection{Lifting zero-dimensional non-klt centres}

\begin{lem}\label{l-lifting-0dim-lcc}
Assume that $(X,B+M)$ is a generalised dlt pair with data $\phi\colon X'\to X$ and $M'$, 
and $x\in X$ is a zero-dimensional generalised non-klt centre of $(X,B+M)$. 
Write 
$$
K_{X'}+B'+M'=\phi^*(K_X+B+M)
$$ 
and assume $(X',B')$ is log smooth. Then  
$(X',B'+M')$ has a zero-dimensional generalised non-klt centre $x'$ mapping to $x$.
\end{lem}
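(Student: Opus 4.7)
The plan is to localise the problem using the generalised dlt hypothesis, identify generalised non-klt centres of $(X',B'+M')$ with ordinary lc centres of $(X',B')$, and then produce the desired zero-dimensional centre by induction on the dimension.

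First I would exploit the generalised dlt hypothesis. Since $x$ is a zero-dimensional generalised non-klt centre, the definition of generalised dlt (Section 2.6(2)) furnishes an open neighbourhood $U$ of $x$ over which $(X,B)$ is log smooth and $M'=\phi^{*}M$. After shrinking $X$ to $U$ we may assume both hold globally, whence
$$K_{X'}+B'=\phi^{*}(K_X+B).$$
Since $(X,B+M)$ is generalised lc, every coefficient of $B'$ is at most $1$, and since $x$ is a zero-dimensional lc centre of the log smooth pair $(X,B)$, there are exactly $n=\dim X$ components of $\rddown{B}$ meeting transversely at $x$.

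Next I would view $(X',B'+M')$ as a generalised sub-pair with data $\mathrm{id}\colon X'\to X'$ and nef part $M'$, and identify its generalised log discrepancies with the ordinary log discrepancies of $(X',B')$. Indeed, for any resolution $\psi\colon W\to X'$, the pullback $\psi^{*}M'$ is still nef, so with $M'_W:=\psi^{*}M'$ the equation
$$K_W+B'_W+M'_W=\psi^{*}(K_{X'}+B'+M')$$
gives $B'_W=\psi^{*}(K_{X'}+B')-K_W$, precisely the log discrepancy divisor of $(X',B')$. Hence the generalised non-klt centres of $(X',B'+M')$ coincide with the lc centres of $(X',B')$, and it suffices to produce a zero-dimensional lc centre $x'$ of $(X',B')$ with $\phi(x')=x$.

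I would prove this remaining assertion by induction on $n=\dim X$. The case $n=0$ is trivial. For $n\geq 1$, pick a component $S$ of $\rddown{B}$ passing through $x$, and let $S'\subset X'$ be its strict transform; by the pullback formula $S'$ is a smooth component of $\rddown{B'}$ of coefficient $1$. Setting $B_S=(B-S)|_S$ and $B'_{S'}=(B'-S')|_{S'}$, adjunction yields log smooth (sub-)pairs $(S,B_S)$ of dimension $n-1$ and $(S',B'_{S'})$ with $x$ a zero-dimensional lc centre of $(S,B_S)$ and
$$K_{S'}+B'_{S'}=(\phi|_{S'})^{*}(K_S+B_S).$$
The restricted map $\phi|_{S'}\colon S'\to S$ is birational between smooth varieties. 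By induction there is a zero-dimensional lc centre $x'$ of $(S',B'_{S'})$ with $\phi|_{S'}(x')=x$; it is the intersection of $S'$ with $n-1$ other components $D_1,\dots,D_{n-1}$ of $\rddown{B'}$ meeting $S'$ transversely at $x'$, so $x'=S'\cap D_1\cap\cdots\cap D_{n-1}$ is a zero-dimensional lc centre of $(X',B')$ over $x$, completing the induction.

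The main obstacle is largely bookkeeping in the reduction step: using the generalised dlt condition carefully to ensure $M'=\phi^{*}M$ holds near $x$, and verifying that no hidden contribution from the nef part distinguishes the generalised and ordinary log discrepancies on $X'$. Once that is in place, the inductive step is the standard lifting of strata through a birational morphism of log smooth pairs.
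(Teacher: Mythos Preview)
Your proof is correct and follows essentially the same approach as the paper's: both use the generalised dlt hypothesis to reduce to $M'=\phi^*M$ (so that $K_{X'}+B'=\phi^*(K_X+B)$ and the generalised non-klt centres of $(X',B'+M')$ are just the ordinary non-klt centres of $(X',B')$), and then both run induction on $\dim X$ by restricting to a component $S$ of $\rddown{B}$ through $x$ and its strict transform $S'$, applying adjunction on each side and the pullback relation $K_{S'}+B_{S'}'=(\phi|_{S'})^*(K_S+B_S)$. The only cosmetic differences are that the paper takes $\dim X=1$ as the base case while you take $\dim X=0$, and you spell out a bit more explicitly why $x'$ is a zero-dimensional lc centre of $(X',B')$ rather than just of $(S',B_{S'}')$.
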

\begin{proof}

Note that here we are considering $(X',B'+M')$ as a generalised sub-pair with data $X'\to X'$ and $M'$.
The generalised non-klt centres of $(X',B'+M')$ are just the 
non-klt centres of $(X',B')$. So we want to show that $(X',B')$ has a zero-dimensional 
non-klt centre $x'$ mapping to $x$.

Since $(X,B+M)$ is generalised dlt and $x\in X$ is a generalised non-klt centre, by definition, 
$(X,B)$ is log smooth near $x$ and $M'=\phi^*M$ over a neighbourhood of $x$. Shrinking 
$X$ we can assume $(X,B)$ is log smooth and that $M'=\phi^*M$. Writing 
$$
K_{X'}+C':=\phi^*(K_X+B),
$$
 we see that 
$C'=B'$ because $M'=\phi^*M$, hence $x$ is a non-klt centre of $(X,B)$.  
Thus in this situation $M'$ is not relevant, so removing it from now on we can assume $M'=0$. 

We can assume $d:=\dim X>1$ as the lemma is obvious in dimension one. 
We use induction on dimension. Since $(X,B)$ is log smooth and $x$ is a zero-dimensional
non-klt centre, $x$ is an intersection point of 
$d$ components of $\rddown{B}$ passing through $x$. Let $S$ be one such component 
and let $S'$ be its birational transform on $X'$. 
By adjunction, define $K_S+B_S=(K_X+B)|_S$ and $K_{S'}+B_{S'}=(K_{X'}+B')|_{S'}$. 
Since $(X',B')$ is log smooth and $S'$ is a component of ${B'}$ with coefficient $1$, $(S',B_{S'})$ is log smooth.
Denoting the induced morphism $S'\to S$ by $\pi$, we have 
$$
K_{S'}+B_{S'}=\pi^*(K_S+B_S).
$$ 
Moreover, $(S,B_S)$ is log smooth and $x$ is a non-klt centre of $(S,B_S)$. Thus by induction, there is a zero-dimensional 
non-klt centre $x'$ of $(S',B_{S'})$ mapping onto $x$. But then $x'$ is also a non-klt centre 
of $(X',B')$.     

\end{proof}

\subsection{Ample models for certain generalised pairs}
In this subsection we show that certain generalised pairs have ample models in the relative birational setting.

\begin{lem}\label{l-lc-model-bir}
Assume that
\begin{itemize}
\item $(X,B+M)$ is an lc generalised pair with data $X'\overset{\phi}\to X\overset{g}\to Y$ and $M'$, 
\item $B,M'$ are $\Q$-divisors and $g$ is a birational contraction, 
\item $(Y,B_Y+M_Y)$ is generalised lc with nef part $M'$ where $K_Y+B_Y+M_Y$ is the pushdown of $K_X+B+M$, and 
\item $(Y,C)$ is klt for some boundary $C$. 
\end{itemize}
Then $(X,B+M)$ has an ample model over $Y$, i.e. 
$$
\bigoplus_{m\ge 0}g_*\mathcal{O}_X(m(K_X+B+M))
$$
is a finitely generated $\mathcal{O}_Y$-module.
\end{lem}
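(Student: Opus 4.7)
The plan is to establish the claimed finite generation by producing a relative good minimal model for $K_X+B+M$ over $Y$ via a generalised MMP, then invoking the base-point-free theorem. Since $g$ is birational and the pushdown of $K_X+B+M$ equals $K_Y+B_Y+M_Y$, we may write $K_X+B+M=g^*(K_Y+B_Y+M_Y)+E$ for a $g$-exceptional $\R$-divisor $E$, so after running a $(K_X+B+M)$-MMP over $Y$ we expect to reach a model on which the relevant divisor is essentially the pullback from $Y$.

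First I would reduce to a tractable setting. Replacing $(X,B+M)$ by a $\Q$-factorial generalised dlt model via Lemma \ref{l-Q-fact-dlt-model} does not affect the graded $\mathcal{O}_Y$-algebra in question, as the model is crepant to the original pair. Next, using that $(Y,C)$ is klt and $g$ is birational, I would construct a klt companion on $X$: take a boundary $C_X$ built from the birational transform $g^{-1}_*C$ together with small positive coefficients on the $g$-exceptional divisors, chosen so that $(X,C_X)$ is klt (which is possible since all log discrepancies of $(Y,C)$ are positive). Then for every sufficiently small $\epsilon>0$ the generalised pair $(X,(1-\epsilon)B+\epsilon C_X+(1-\epsilon)M)$ with nef part $(1-\epsilon)M'$ is generalised klt.

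Next I would run a $(K_X+B+M)$-MMP over $Y$ with scaling of an ample divisor. Since $g$ is birational, every extremal contraction over $Y$ is itself birational, so each step is either a divisorial contraction of a $g$-exceptional divisor or a flip; no Mori fibre spaces can arise. Existence of the required flips, and termination, follow by interpreting the scaled MMP (after passing to a small perturbation towards the klt companion) as an MMP for a generalised klt pair, to which BCHM-type results (cf.\ Section \ref{ss-MMP}) apply; the fact that $g$ is birational makes the usual bigness hypothesis automatic. The MMP terminates at a model $(X'',B''+M'')$ with $K_{X''}+B''+M''$ nef over $Y$.

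Finally, the base-point-free theorem for generalised lc pairs (applicable by virtue of the klt companion) gives that $K_{X''}+B''+M''$ is semi-ample over $Y$, producing a relative ample model. Since the graded $\mathcal{O}_Y$-algebra $\bigoplus_{m\ge 0}g_*\mathcal{O}_X(m(K_X+B+M))$ is preserved under the steps of the MMP, this yields the desired finite generation. The main obstacle is establishing termination of the MMP and semi-ampleness at the end in the generalised lc setting; both hinge critically on the klt companion, whose construction rests on the hypothesis that $(Y,C)$ is klt for some boundary $C$.
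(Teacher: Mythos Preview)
Your termination step is a genuine gap. You claim the $(K_X+B+M)$-MMP over $Y$ terminates by ``interpreting the scaled MMP after a small perturbation towards the klt companion as an MMP for a generalised klt pair''. But the MMP on $K_X+(1-\epsilon)B+\epsilon C_X+(1-\epsilon)M$ is a \emph{different} MMP: there is no reason every $(K_X+B+M)$-negative extremal ray is negative for the perturbed divisor, and no uniform $\epsilon$ is available a priori for all (possibly infinitely many) steps. Nor does the existence of a klt $C_X$ force $X$ to be of Fano type over $Y$ --- the crepant pullback $K_X+\Gamma=g^*(K_Y+C)$ can have $\Gamma$ with negative coefficients on exceptional divisors, so one cannot simply conclude that every MMP terminates. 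Likewise, your final appeal to a base-point-free theorem ``for generalised lc pairs, applicable by virtue of the klt companion'' is not a result one can cite; semi-ampleness on a generalised lc minimal model over $Y$ is exactly the hard part.

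The paper's proof avoids termination altogether. After passing to a suitable $\Q$-factorial dlt model, it runs the MMP on $K_X+B+M$ over $Y$ only far enough to reach a model where $K_X+B+M$ is a limit of movable$/Y$ divisors (this uses just $\lim\lambda_i=0$, not termination). Writing $K_X+B+M+R=g^*(K_Y+B_Y+M_Y)$ with $R$ exceptional, the general negativity lemma gives $R\ge 0$. The argument then proceeds by \emph{induction on the number of $g$-exceptional prime divisors}. If there are none, $X$ is of Fano type over $Y$ and one is done. Otherwise, pick an exceptional $S$: if $g(S)\not\subset g(\Supp R)$ then $K_X+B+M\sim_\Q 0$ over the generic point of $g(S)$, and a separate MMP on $K_X+B+tS+M$ contracts $S$, reducing the exceptional count; if instead every exceptional divisor not in $\rddown{B}$ satisfies $g(S)\subset g(\Supp R)$, one shows all exceptionals lie in $\Supp(B+R)$, and a convex combination of $g^*(K_Y+C)$ with $K_X+B+R+M$ exhibits $X$ as Fano type over $Y$ directly. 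This inductive mechanism is precisely what replaces the termination and base-point-free steps you are missing.
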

\begin{proof}
We can assume $\phi$ is a log resolution of $(X,B)$ and that $X'\to Y$ is a log resolution of $(Y,B_Y)$. 
Write 
$$
K_{X'}+B'+M'=\phi^*(K_X+B+M)
$$
and let $\Delta'$ be obtained from $B'$ by increasing the coefficient of every exceptional$/X$ prime divisor 
to $1$. Since $(X',\Delta')$ is log smooth and log canonical, no divisor in $\Delta'$ with coefficient less than $1$ can contain any stratum of intersections of divisors of coefficient $1$.
Thus replacing $(X,B+M)$ with $(X',\Delta'+M')$ we can assume that 
 $(X,B+M)$ is $\Q$-factorial generalised dlt.

Run an MMP on $K_X+B+M$ over $Y$ with scaling of some ample divisor. We reach a model $V$ on which 
the pushdown $K_V+B_V+M_V$ is numerically a limit of movable$/Y$ $\R$-divisors (see \ref{ss-MMP} or 
[\ref{B-mmodel-II}, Step 2 in the proof of Theorem 1.5] for relevant details). 
Replacing $X$ with $V$ we can assume that $K_X+B+M$ is numerically a limit of movable$/Y$ $\R$-divisors.

Since $K_Y+B_Y+M_Y$ is $\R$-Cartier, we can write 
$$
K_X+B+M+R=g^*(K_Y+B_Y+M_Y)
$$
where $R$ is exceptional over $Y$. Then $-R$ is numerically a limit of movable$/Y$ $\R$-divisors, 
so for any exceptional$/Y$ prime divisor $S\subset X$, the divisor $-R|_S$ is pseudo-effective over $g(S)$. 
Therefore, by the general negativity lemma (cf. [\ref{B-lc-flips}, Lemma 3.3]), 
we have $R\ge 0$ (as in the proof of Lemma \ref{l-Q-fact-dlt-model}, to apply the negativity 
lemma we can first do a base change 
to an uncountable ground field if necessary). 

Let $E$ be the sum of the prime exceptional divisors of $g$ that are not components of 
$B+R$. We do induction on the number of components of $E$. If $E=0$, 
then $\Supp (B+R)$ contains all the exceptional divisors 
of $g$, so $X$ is of Fano type over $Y$ because $(Y,C)$ is klt for some $C$ (by the relative version of 
[\ref{B-compl}, 2.13(7)] applied over $Y$), hence $(X,B+M)$ has an ample 
model over $Y$. 

Now assume $E\neq 0$. By construction $(X,B+M)$ is $\Q$-factorial generalised dlt because this property is preseved by running MMP (see \ref{ss-gpp}(2)). So since $E$ has no common component with $B$, 
$(X,B+tE+M)$ is generalised dlt for some small number $t>0$ by definition of generalised dlt pairs.
Thus running an MMP on 
$$
K_X+B+tE+M
$$ 
over $Y$ with scaling of some ample divisor contracts $E$ 
because by the general negativity lemma the right hand side of the equation  
$$
K_X+B+tE+M\equiv_Y tE-R
$$
eventually becomes anti-effective. 

Let $W\to U$ be the step of the MMP 
where a component $E$ is contracted for the first time. 
Then $X\bir W$ is an isomorphism in codimension one 
because the MMP cannot contract any divisor other than components of $E$. Moreover, 
$$
K_W+B_W+M_W\sim_{\Q,U} -R_W \sim_{\Q,U} 0
$$
because $W\to U$ is extremal and because $-R_W$ is numerically a limit of movable$/Y$ 
$\R$-divisors and the divisor contracted by $W\to U$ is not a component of $R_W$.
 Therefore, if $K_U+B_U+M_U$ has an ample model over $Y$, then $K_W+B_W+M_W$ 
has an ample model over $Y$ which in turn implies that $K_X+B+M$ has an ample model over $Y$. 

Note that $(U,B_U+M_U)$ is $\Q$-factorial generalised dlt because $X\bir U$ is a partial MMP on 
$$
K_X+B+tE+M,
$$ 
so 
$$
(U,B_U+tE_U+M_U)
$$ 
is $\Q$-factorial generalised dlt. So we can replace $(X,B+M)$ 
with $(U,B_U+M_U)$. We are then done by 
induction as $E$ has one less component.

\end{proof}

\subsection{Fibrations}
In this subsection we prove a few results in order to treat Theorem \ref{t-mirror-symmetry-generalised} 
when the underlying space admits a suitable fibration.

\begin{lem}\label{l-fibration}
Let $(X,B+M)$ be as in Theorem \ref{t-mirror-symmetry-generalised}. Assume that 
\begin{itemize}
\item $f\colon X\to Z$ is a contraction where $\dim Z>0$, 

\item $K_X+B+M\sim_{\Q,Z} 0$, and 

\item $X$ is of Fano type over $Z$. 
\end{itemize}
Then there is a high resolution $Z'\to Z$ and a closed point 
$z'\in Z'$ such that 
\begin{itemize}
\item $z'$ maps to $f(x)$,

\item $z'$ is a generalised non-klt centre of $(Z',B_{Z'}+M_{Z'})$ where the latter is given by adjunction as in 
\ref{ss-Adjunction-gen-pair},

\item and we have 
$$
\tau_{G'}(K_{Z}+B_{Z}+M_{Z})=0
$$ 
where $G'$ is the exceptional divisor of the blowup 
of $Z'$ at $z'$.
\end{itemize}
\end{lem}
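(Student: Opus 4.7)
The plan is to apply generalised adjunction for fibrations (\ref{ss-Adjunction-gen-pair}) to descend the threshold condition from $X$ to a high resolution of $Z$, choosing $z'$ as the image on $Z'$ of a zero-dimensional non-klt centre $x'\in X'$ lying over $x$. First I would fix a sufficiently high resolution $Z'\to Z$ to obtain a generalised sub-pair $(Z',B_{Z'}+M_{Z'})$ with $M_{Z'}$ nef and $K_X+B+M\sim_\Q f^*(K_Z+B_Z+M_Z)$; pseudo-effectivity of $K_Z+B_Z+M_Z$ descends from that of $K_X+B+M$ since $f$ is a surjective contraction. After replacing $X'$ with a higher log resolution factoring through the blowup $\psi\colon Y\to X$ of $x$ (so $E$ becomes a divisor on $X'$) and extending $X'\bir Z'$ to a morphism $f'\colon X'\to Z'$, Lemma~\ref{l-lifting-0dim-lcc} yields a zero-dimensional generalised non-klt centre $x'\in X'$ of $(X',B'+M')$ mapping to $x$; set $z':=f'(x')\in Z'$.

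Verifying that $z'$ is a generalised non-klt centre of $(Z',B_{Z'}+M_{Z'})$ is the crux of the argument and the main obstacle I expect. The point $x'$ lies in the single fibre $(f')^{-1}(z')$, and the canonical bundle formula for generalised pairs (\ref{ss-Adjunction-gen-pair} and [\ref{Filipazzi-18}]) ensures that this non-klt contribution is captured either by the discriminant $B_{Z'}$ (from vertical components of $\lfloor B'\rfloor$ through $x'$) or by the moduli part $M_{Z'}$ (from variation of the family over $z'$); care is needed because when the components of $\lfloor B'\rfloor$ meeting at $x'$ are mostly horizontal over $Z'$, the non-klt structure downstairs appears through $M_{Z'}$ rather than $B_{Z'}$. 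Equivalently, if $(Z',B_{Z'}+M_{Z'})$ were generalised klt in a neighbourhood of $z'$, pulling back via $f'$ and using the adjunction formula would force $(X',B'+M')$ to be generalised sub-klt near $(f')^{-1}(z')$, contradicting the existence of $x'$ as a non-klt place.

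For the threshold claim I would argue by contradiction. Let $\sigma\colon Y'\to X'$ be the blowup at $x'$ with exceptional $E'$ and $\rho\colon Z''\to Z'$ the blowup at $z'$ with exceptional $G'$. Take a common resolution $W$ with morphisms $\gamma\colon W\to X$ factoring through $Y'$ (so $E'$ is a divisor on $W$) and $g\colon W\to Z''$ resolving the rational map $Y'\bir Z''$. Since $E'$ contracts to $x'\in X'$ and $x'\mapsto z'$, we have $g(E')\subset\rho^{-1}(z')=G'$ set-theoretically, so $g^*G'\geq E'$. If $\tau_{G'}(K_Z+B_Z+M_Z)>0$, there is $t>0$ with $\pi^*(K_Z+B_Z+M_Z)-tG'$ pseudo-effective on $Z''$ (where $\pi\colon Z''\to Z$ is the composition); pulling back via $g$ and using $\gamma^*(K_X+B+M)\sim_\Q g^*\pi^*(K_Z+B_Z+M_Z)$ yields that $\gamma^*(K_X+B+M)-tE'$ is pseudo-effective, so $\tau_{E'}(K_X+B+M)\geq t>0$. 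Finally, $E$ and $E'$ are prime divisors over $X$ with common centre $x$, and both satisfy $a(\cdot,X,B+M)\leq 0<1$ ($E$ because $x$ is a zero-dimensional non-klt centre and $E'$ by direct computation using that $x'$ is a zero-dimensional lc centre of the log smooth pair $(X',B')$); moreover $X$ is of Fano type over $Z$, so $(X,C)$ is klt for some boundary $C$. Lemma~\ref{lem-tau-for-non-canonical-places} then gives $\tau_E(K_X+B+M)>0$, contradicting hypothesis~(5) of Theorem~\ref{t-mirror-symmetry-generalised}.
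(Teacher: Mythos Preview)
Your overall strategy matches the paper's: lift $x$ to a zero-dimensional non-klt centre $x'$ on $X'$ via Lemma~\ref{l-lifting-0dim-lcc}, set $z'=f'(x')$, and then verify the two bulleted claims. Your threshold argument is correct and in fact slightly more direct than the paper's: the paper introduces an auxiliary divisor $H''$ on a higher model $Z''$ of $Z$ (the divisorial image of the birational transform of $E'$) and compares $G'$ with $H''$ via Lemma~\ref{lem-tau-for-non-canonical-places} applied on the base, whereas you pull $G'$ back to $W$ and compare $E'$ with $E$ on $X$ via the same lemma.

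The genuine gap is in showing that $z'$ is a generalised non-klt centre of $(Z',B_{Z'}+M_{Z'})$. Your contrapositive ``if $(Z',B_{Z'}+M_{Z'})$ were generalised klt near $z'$, then $(X',B'+M')$ would be sub-klt near $(f')^{-1}(z')$'' is false as stated: horizontal non-klt centres of $(X',B'+M')$ meet every fibre yet contribute nothing to the discriminant, so the base pair can be klt while the total space fails to be sub-klt over every point. The adjunction formula $K_{X'}+B'+M'\sim_\Q f'^*(K_{Z'}+B_{Z'}+M_{Z'})$ is only a linear equivalence and does not transfer singularities in that direction. The remark that the non-klt structure might ``appear through $M_{Z'}$'' is also off: $M_{Z'}$ is nef on $Z'$, so it cannot by itself create generalised non-klt centres of the base pair.

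What is actually needed is a statement about \emph{vertical} non-klt places, and making it precise essentially forces the paper's construction. One passes to further resolutions $X''\to X'$ and $Z''\to Z'$ so that the birational transform $E''$ of the exceptional divisor $E'$ of the blowup at $x'$ maps onto a \emph{divisor} $H'':=f''(E'')$ on $Z''$. Since $\mu_{E''}B''=1$ and $E''\le f''^*H''$, the generalised lc threshold of $f''^*H''$ with respect to $(X'',B''+M'')$ over the generic point of $H''$ is $\le 0$, hence $\mu_{H''}B_{Z''}\ge 1$ by the definition of the discriminant. Thus $H''$ is a non-klt place whose centre on $Z'$ is $z'$. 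The paper then reuses this same $H''$ for the threshold claim, applying Lemma~\ref{lem-tau-for-non-canonical-places} on the base (with the klt pair $(Z,C)$ coming from the Fano type assumption) to swap $H''$ for $G'$; your pullback route for the threshold avoids this second use of $H''$ but you still need $H''$ (or an equivalent device) for the non-klt centre claim.
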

\begin{proof}
By adjunction, for each birational contraction $Z'\to Z$ where $Z'$ is normal, we get  
$(Z',B_{Z'}+M_{Z'})$ as defined in \ref{ss-Adjunction-gen-pair}. Assuming $Z'\to Z$ is a high log resolution 
of $(Z,B_Z)$, $M_{Z'}$ is nef. In particular, $(Z',B_{Z'}+M_{Z'})$ is a generalised sub-pair and $(Z,B_Z+M_Z)$ 
is a generalised pair with nef part $M_{Z'}$. 
Replace the given morphism $\phi\colon X'\to X$ with a high log resolution of $(X,B)$ 
so that the induced map $f'\colon X'\bir Z'$ is a morphism. 
Since $K_X+B+M$ is pseudo-effective and 
$$
K_X+B+M\sim_\Q f^*(K_Z+B_Z+M_Z),
$$ 
$K_Z+B_Z+M_Z$ is pseudo-effective.

Write 
$$
K_{X'}+B'+M'=\phi^*(K_X+B+M)
$$
where $M'$ is the nef part of $(X,B+M)$.
By assumption, $(X,B+M)$ is $\Q$-factorial generalised dlt and $x$ is a zero-dimensional generalised non-klt centre. Moreover, $(X',B')$ is log smooth as $\phi$ is assumed to be a log resolution of $(X,B)$.
Applying Lemma \ref{l-lifting-0dim-lcc}, there is a zero-dimensional generalised non-klt 
centre $x'$ of $(X',B'+M')$ mapping to $x$.
 
Let $z'=f'(x')$. We will show $z'$ is a generalised non-klt centre of $(Z',B_{Z'}+M_{Z'})$. Let $E'$ be the 
exceptional divisor of the blowup of $X'$ at $x'$. Since $x'$ is a generalised non-klt centre of $(X',B'+M')$, 
it is a non-klt centre of $(X',B')$, so 
there are $d:=\dim X$ components of $\rddown{B'}$ intersecting transversally at $x'$. 
So 
$$
a(E',X',B'+M')=0,
$$ 
that is, $E'$ is a non-klt place of $(X',B'+M')$. 

Now pick resolutions $Z''\to Z'$ and $X''\to X'$ such that $f''\colon X''\bir Z''$ is a 
morphism and if $E''\subset X''$ is the birational transform of $E'$, then $H'':=f''(E'')$ is 
a divisor on $Z''$. Write $K_{X''}+B''+M''$ for the pullback of $K_{X'}+B'+M'$. Then 
$E''$ is a component of $\rddown{B''}$, hence $H''$ is a component of $\rddown{B_{Z''}}$ 
because the generalised lc threshold of $f''^*H''$ with respect to $(X'',B''+M'')$ 
over the generic point of $H''$ is zero as $E''\le f''^*H''$. This shows that $z'$ is a generalised non-klt 
centre of $(Z',B_{Z'}+M_{Z'})$ as claimed since $H''$ maps to $z'$ as $E''$ maps to $x'$. 

Let $G'$ be the exceptional divisor of the blowup of $Z'$ at $z'$.
By assumption, $(Z',B_{Z'})$ is log smooth and $M_{Z'}$ is the nef part of $(Z',B_{Z'}+M_{Z'})$. 
Since $z'$ is a generalised non-klt centre of $(Z',B_{Z'}+M_{Z'})$, 
$$
a(G',Z,B_{Z}+M_{Z})=a(G',Z',B_{Z'}+M_{Z'})=0.
$$ 
On the other hand, by the previous paragraph, 
$$
a(H'',Z,B_{Z}+M_{Z})=a(H'',Z',B_{Z'}+M_{Z'})=0.
$$

Now since $X$ is of Fano type over $Z$, there is a boundary $D$ on $X$ such that $(X,D)$ is klt and 
$K_X+D\sim_{\Q,Z} 0$. Thus applying adjunction, we get $(Z,D_Z+N_Z)$ which is generalised klt. 
From this we get $C$ so that $(Z,C)$ is klt (as in the proof of Lemma \ref{l-tau-for-non-canonical-places-extraction}). 
Moreover, $G',H''$ both map to $f(x)$.
Thus, applying Lemma \ref{lem-tau-for-non-canonical-places} to $(Z,B_Z+M_Z), G',H''$, we see that to prove that  
$$
\tau_{G'}(K_{Z}+B_{Z}+M_{Z})=0
$$ 
it is enough to prove that   
$$
\tau_{H''}(K_{Z}+B_{Z}+M_{Z})=0.
$$  

By assumption, 
$$
\tau_{E}(K_{X'}+B'+M')=\tau_{E}(K_{X}+B+M)=0
$$ 
where $E$ is the exceptional divisor of the blowup of $X$ at $x$. Also 
$$
a(E,X,B+M)=0
$$
and 
$$
a(E',{X},B+M)=a(E',{X'},B'+M')=0
$$
where $E'$ is the exceptional divisor of the blowup of $X'$ at $x'$.
 Then by Lemma \ref{lem-tau-for-non-canonical-places},
$$
\tau_{E''}(K_{X}+B+M)=\tau_{E'}(K_{X}+B+M)=0
$$
where recall that $E''\subset X''$ is the birational transform of $E'$.
Thus
$$
K_{X''}+B''+M''-tE''
$$ 
is not pseudo-effective for any real number $t>0$.  
This in turn implies that  
$$
K_{X''}+B''+M''-tf''^*H''
$$ 
is not pseudo-effective for any real number $t>0$ 
because $E''\le f''^*H''$. But then 
$$
K_{Z''}+B_{Z''}+M_{Z''}-tH''
$$ 
is not pseudo-effective as 
$$
K_{X''}+B''+M''-tf''^*H''\sim_\Q f''^*(K_{Z''}+B_{Z''}+M_{Z''}-tH'').
$$
Therefore, 
$$
\psi^*(K_{Z}+B_{Z}+M_{Z})-tH''
$$
is not pseudo-effective for any $t>0$ where $\psi$ denotes $Z''\to Z$, hence 
$$
\tau_{H''}(K_{Z}+B_{Z}+M_{Z})=0
$$
as required.

\end{proof}

\begin{lem}\label{l-fibration-2}
Assume that 
\begin{itemize}
\item $(X,B+M)$ is a projective $\Q$-factorial generalised lc pair, 
\item $K_X+B+M$ is pseudo-effective,
\item $x\in X$ is a point not contained in ${\bf B}_-(K_X+B+M)$, 
\item $f\colon X\to Z$ is a contraction with $\dim Z>0$ and $K_X+B+M\sim_\R f^*L$ for some $\R$-Cartier $\R$-divisor $L$, and 
\item $X$ is of Fano type over $Z$.
\end{itemize}
Then 
$$
f(x)\notin {\bf B}_-(L).
$$ 
\end{lem}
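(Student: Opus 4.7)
My plan is to argue by contrapositive: assume $f(x)\in\mathbf{B}_-(L)$ and derive $x\in\mathbf{B}_-(K_X+B+M)$, contradicting the hypothesis.

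The initial descent is routine. By definition of the restricted base locus there exist an ample $\R$-divisor $A$ on $Z$ and $\epsilon>0$ with $f(x)\in\mathbf{B}(L+\epsilon A)$. Since $f$ is a contraction ($f_*\mathcal{O}_X=\mathcal{O}_Z$), the projection formula gives $H^0(X,mf^*(L+\epsilon A))\cong H^0(Z,m(L+\epsilon A))$ for each sufficiently divisible $m$, so
$$
\mathbf{B}(f^*L+\epsilon f^*A) \;=\; f^{-1}(\mathbf{B}(L+\epsilon A))
$$
contains $x$. Rewriting via $K_X+B+M\sim_\R f^*L$, we obtain $x\in\mathbf{B}(K_X+B+M+\epsilon f^*A)$. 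Moreover, since $\mathbf{B}(L+\eta A)$ grows as $\eta$ decreases, the same conclusion holds for every $0<\eta\le\epsilon$.

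The main work is to upgrade this into $x\in\mathbf{B}(K_X+B+M+\delta A_X)$ for some ample $A_X$ on $X$ and some $\delta>0$, which would place $x$ inside $\mathbf{B}_-(K_X+B+M)$ and yield the desired contradiction. The Fano-type hypothesis on $X/Z$ is what enables this upgrade: pick a boundary $\Theta$ on $X$ with $(X,\Theta)$ klt, $K_X+\Theta\sim_\R 0/Z$, and $\Theta$ big over $Z$, and write $\Theta\sim_\R A_{X/Z}+E_{X/Z}$ with $A_{X/Z}$ ample over $Z$ and $E_{X/Z}\ge 0$. For $n\gg 0$ the divisor $A_X:=A_{X/Z}+nf^*A$ is ample on $X$, and for any $\delta>0$ we can decompose
$$
f^*L+\delta A_X \;=\; f^*(L+\delta nA)+\delta A_{X/Z}.
$$
A careful comparison of the linear systems of $m(f^*L+\delta A_X)$ and $mf^*(L+\delta nA)$ via the twisted pushforward sheaf $\mathcal{O}_Z(m(L+\delta nA))\otimes f_*\mathcal{O}_X(m\delta A_{X/Z})$, combined with the containment $f(x)\in\mathbf{B}(L+\delta nA)$ derived from the first paragraph (taking $\delta n\le\epsilon$), should force every global section of $m(f^*L+\delta A_X)$ to vanish at $x$.

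The hardest step will be this final propagation of the base-locus condition. The semi-ample divisor $f^*A$ is trivial on every fibre of $f$, while the ample divisor $A_X$ is strictly positive on fibres, so the section-theoretic descent via projection formula is indirect: sections of $m(f^*L+\delta A_X)$ are not simply pulled back from $Z$, but live in $H^0(Z,\mathcal{O}_Z(m(L+\delta nA))\otimes f_*\mathcal{O}_X(m\delta A_{X/Z}))$. One expects the Fano-type structure to guarantee that this twisted pushforward sheaf is sufficiently controlled near $f(x)$ that the vanishing of every global section of $\mathcal{O}_Z(m(L+\delta nA))$ at $f(x)$ transfers to vanishing of every global section of $m(f^*L+\delta A_X)$ at $x$. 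Making this transfer rigorous is where the bulk of the technical work lies.
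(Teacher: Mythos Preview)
Your opening reduction is correct and coincides with the paper's first step: from $f(x)\in\mathbf{B}(L+\epsilon A)$ and the projection formula one gets $x\in\mathbf{B}(K_X+B+M+\epsilon f^*A)$, with $f^*A$ only semi-ample.

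The gap is in the upgrade from the semi-ample twist $f^*A$ to an honest ample twist $A_X$. Your proposed mechanism --- controlling sections of $m(f^*L+\delta A_X)$ through the pushforward $\mathcal{O}_Z(m(L+\delta nA))\otimes f_*\mathcal{O}_X(m\delta A_{X/Z})$ --- cannot work as stated. The sheaf $f_*\mathcal{O}_X(m\delta A_{X/Z})$ has rank growing like $m^{\dim X-\dim Z}$, and twisting a line bundle with a base point by a high-rank positive bundle generally kills that base point: global sections of $\mathcal{O}_Z(m(L+\delta nA))\otimes\mathcal{E}$ are not tuples of sections of $\mathcal{O}_Z(m(L+\delta nA))$, and there is no reason they should all vanish at $f(x)$. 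Geometrically, an effective divisor $D\sim_\R f^*(L+\delta nA)+\delta A_{X/Z}$ need not contain the fibre over $f(x)$ once the $A_{X/Z}$ term is present; restricted to that fibre, $D$ moves in an ample class and can avoid $x$. The Fano-type hypothesis gives no control over these twisted linear systems, so the ``transfer'' you describe is not merely technical --- it is false in general.

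The paper's route is entirely different. Having reached $x\in\mathbf{B}(K_X+B+M+tf^*A)$, it uses Fano type over $Z$ to replace $K_X+B+M+tf^*A$ by an $\R$-linearly equivalent generalised klt log divisor with big boundary, on which one can run a terminating MMP with scaling of an ample divisor $H$. Because $x$ lies in the stable base locus, the MMP is not an isomorphism near $x$; since the MMP is with scaling, this forces $x\in\mathbf{B}(K_X+B+M+tf^*A+uH)$ for some small $u>0$. Now $f^*A$ is semi-ample, so one may drop it: $x\in\mathbf{B}(K_X+B+M+uH)\subset\mathbf{B}_-(K_X+B+M)$. The MMP is what bridges the gap between the semi-ample and ample perturbations; section-theoretic bookkeeping alone will not do it.
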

\begin{proof}
Pick an ample Cartier divisor $A_Z$ on $Z$ and let $A=f^*A_Z$. 
We want to show that 
$$
z:=f(x)\notin {\bf B}(L+tA_Z)
$$ 
for any $t\in \R^{>0}$. Assume otherwise, that is, assume that 
$$
z\in {\bf B}(L+tA_Z)
$$ 
for some $t\in \R^{>0}$. Thus $z$ belongs to the support of every divisor 
$0\le R_Z\sim_\R L+tA_Z$. Then $f^{-1}\{z\}\subset \Supp R$ for every divisor 
$$
0\le R\sim_\R K_X+B+M+tA
$$
because any such $R$ is the pullback of an $R_Z$ as above. Therefore,  
$$
x\in {\bf B}(K_X+B+M+tA).
$$

Since $X$ is of Fano type over $Z$, $B+M$ is big over $Z$. Moreover, $B+M$ has a  
minimal model $Y$ over $Z$ on which $B_Y+M_Y$ is semi-ample over $Z$.  
Then  $s(B_Y+M_Y)+tA_Y$ is semi-ample for every $0<s\ll t$ (this can be seen by considering 
the ample model of $B_Y+M_Y$ over $Z$). Pick one such number $s$ and pick a general
$$
0\le D_Y\sim_\R s(B_Y+M_Y)+tA_Y. 
$$

Since $X$ is of Fano type over $Z$, $Y$ is of Fano type over $Z$, hence $Y$ has $\Q$-factorial klt singularities.
 Thus 
$$
(Y,(1-s)B_Y+(1-s)M_Y)
$$ 
has generalised klt singularities with nef part $(1-s)M'$ where replacing $X'$ we are assuming 
the induced map $X'\bir Y$ is a morphism. Since $D_Y$ is general, we deduce that 
$$
(Y,(1-s)B_Y+D_Y+(1-s)M_Y)
$$ 
is generalised klt with nef part $(1-s)M'$.

On the other hand, since $X\bir Y$ is an MMP on $B+M$ over $Z$, it is also an MMP on 
$s(B+M)+tA$. Thus $D_Y$ determines a unique divisor 
$$
0\le D\sim_\R s(B+M)+tA 
$$
whose pushdown to $Y$ is $D_Y$. Then the pair 
$$
(X,(1-s)B+D+(1-s)M)
$$ 
is generalised klt with nef part $(1-s)M'$ because  
$$
K_X+(1-s)B+D+(1-s)M\sim_{\R,Z} K_X+(1-s)B+sB+sM+tA+(1-s)M
$$
$$
=K_X+B+M+tA\sim_{\R,Z} 0
$$
which ensures that the generalised log discrepancies of 
$$
(X,(1-s)B+D+(1-s)M)
$$
coincide with those of 
$$
(Y,(1-s)B_Y+D_Y+(1-s)M_Y)
$$ 
which is generalised klt with nef part $(1-s)M'$. 
Moreover, $D$ is big.

Now, by [\ref{BZh}, Lemma 4.4], we can run an MMP on 
$$
K_X+(1-s)B+D+(1-s)M
$$ 
which ends with a good minimal model, say $V$. By the first paragraph, 
$$
x\in {\bf B}(K_X+B+M+tA)
$$
which means that $X\bir V$ is not an isomorphism near $x$. That is, $x$ belongs to 
the exceptional locus of some step of the MMP. But then 
$$
x\in {\bf B}(K_X+B+M+tA+uH)
$$
where $H$ is an ample divisor and $u>0$ is a sufficiently small real number. 
Therefore, 
$$
x\in {\bf B}(K_X+B+M+uH)
$$
as $A$ is semi-ample, hence 
$$
x\in {\bf B}_-(K_X+B+M),
$$
a contradiction.

\end{proof}

\begin{lem}\label{l-mirror-symmetry-generalised-induction}
Assume that Theorem \ref{t-mirror-symmetry-generalised} holds in dimension $\le d-1$. 
Suppose  
\begin{itemize}
\item $(X,B+M)$ is as in Theorem \ref{t-mirror-symmetry-generalised} in dimension $d$ with data 
$X'\to X$ and $M'$, 

\item $X\to Y$ is a birational contraction and $Y\to Z$ is a non-birational contraction, 

\item $(Y,B_Y+M_Y)$ is generalised lc with nef part $M'$, where $K_Y+B_Y+M_Y$ denotes the pushdown of $K_X+B+M$,

\item $Y$ is of Fano type over $Z$, and 

\item $K_Y+B_Y+M_Y\sim_{\Q,Z} 0$. 
\end{itemize}
Then 
$$
\kappa_\sigma(K_X+B+M)=\kappa(K_X+B+M)=0.
$$
\end{lem}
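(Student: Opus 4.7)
The plan is to descend the problem to a lower-dimensional generalised pair on $Z$ via canonical bundle formula for the fibration $h \colon Y \to Z$, and then invoke the inductive hypothesis of Theorem \ref{t-mirror-symmetry-generalised} in dimension $\dim Z < d$. To begin, apply the adjunction of subsection \ref{ss-Adjunction-gen-pair} to $h$ using $K_Y + B_Y + M_Y \sim_\Q 0/Z$. Taking a sufficiently high resolution $Z' \to Z$, we obtain a generalised pair $(Z, B_Z + M_Z)$ with nef part $M_{Z'}$ and
$$
K_Y + B_Y + M_Y \sim_\Q h^*(K_Z + B_Z + M_Z).
$$
Since $X \to Y$ is a birational contraction with $K_X + B + M$ pulled back from $K_Y + B_Y + M_Y$, the pseudo-effectivity of $K_X + B + M$ transfers to pseudo-effectivity of $K_Z + B_Z + M_Z$. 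After replacing $(Z', B_{Z'} + M_{Z'})$ by a $\Q$-factorial generalised dlt model via Lemma \ref{l-Q-fact-dlt-model}, we may further assume that pair is $\Q$-factorial generalised dlt.

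Next, I would transfer the remaining hypotheses of Theorem \ref{t-mirror-symmetry-generalised} to $Z'$. Let $y \in Y$ and $z = h(y) \in Z$ be the images of $x$. Mimicking the proof of Lemma \ref{l-fibration} applied to $h$ (using Lemma \ref{l-lifting-0dim-lcc} to lift $x$ to a zero-dimensional generalised non-klt place of a log resolution of $(Y, B_Y + M_Y)$ and then transferring across $h$), one obtains a zero-dimensional generalised non-klt centre $z' \in Z'$ mapping to $z$, together with the vanishing
$$
\tau_{G'}(K_Z + B_Z + M_Z) = 0
$$
for $G'$ the exceptional divisor of the blowup of $Z'$ at $z'$. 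This vanishing uses the hypothesis $\tau_E(K_X + B + M) = 0$ together with Lemma \ref{lem-tau-for-non-canonical-places} to compare divisorial valuations with the same centre on $X$, and then the pullback relation for $h$. Finally, applying Lemma \ref{l-fibration-2} to $h$ (whose Fano-type assumption on $Y/Z$ is part of the hypothesis) together with the observation that $y \notin {\bf B}_-(K_Y + B_Y + M_Y)$ (inherited from $x \notin {\bf B}_-(K_X + B + M)$ via the birational contraction $X \to Y$), yields $z \notin {\bf B}_-(K_Z + B_Z + M_Z)$, hence $z' \notin {\bf B}_-(K_{Z'} + B_{Z'} + M_{Z'})$.

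At this stage $(Z', B_{Z'} + M_{Z'})$ satisfies all five hypotheses of Theorem \ref{t-mirror-symmetry-generalised} in dimension $\dim Z < d$, so by the inductive hypothesis there is an MMP on $K_{Z'} + B_{Z'} + M_{Z'}$ terminating at a generalised log Calabi-Yau model $(Z'', B_{Z''} + M_{Z''})$ with $K_{Z''} + B_{Z''} + M_{Z''} \sim_\Q 0$. Consequently $\kappa_\sigma(K_{Z'} + B_{Z'} + M_{Z'}) = 0$ and $K_{Z'} + B_{Z'} + M_{Z'} \sim_\Q N_\sigma(K_{Z'} + B_{Z'} + M_{Z'}) \ge 0$, because the terminating MMP expresses the divisor as the sum of its exceptional (negative) part and a semi-ample trivial part. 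Pushing down to $Z$, pulling back via $h$, and then pushing through the birational $X \to Y$ transfers these identities to $K_X + B + M$, giving the assertion.

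The main obstacle will be establishing $\tau_{G'}(K_Z + B_Z + M_Z) = 0$: this requires carefully following non-pseudo-effectivity through the log resolution of the fibration and through the comparison of divisorial valuations on blowups at $x$, $y$, and $z'$, as in the proof of Lemma \ref{l-fibration}. A secondary difficulty is ensuring that passage to a $\Q$-factorial dlt model of $(Z', B_{Z'} + M_{Z'})$ preserves both the vanishing $\tau_{G'} = 0$ and the condition $z' \notin {\bf B}_-$; this should follow from the fact that the dlt modification only introduces exceptional divisors of log discrepancy zero, which do not affect pseudo-effectivity thresholds at $z'$ or the relevant restricted base loci.
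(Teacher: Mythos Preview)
Your outline has a genuine gap at its very first move. You assert that ``$X \to Y$ is a birational contraction with $K_X + B + M$ pulled back from $K_Y + B_Y + M_Y$'', but the hypotheses only say that $K_Y+B_Y+M_Y$ is the \emph{pushdown} of $K_X+B+M$; the map $X\to Y$ is not assumed crepant, and in general there is a nonzero discrepancy divisor. This breaks several of your subsequent steps. First, you cannot apply Lemma~\ref{l-fibration} (or mimic its proof) to $h\colon Y\to Z$: that lemma requires the pair to be $\Q$-factorial generalised \emph{dlt} with a zero-dimensional generalised non-klt centre, whereas $(Y,B_Y+M_Y)$ is only generalised lc and there is no reason the image of $x$ in $Y$ is a zero-dimensional non-klt centre (in particular Lemma~\ref{l-lifting-0dim-lcc} does not apply to $Y$). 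Second, the ``inheritance'' $y\notin {\bf B}_-(K_Y+B_Y+M_Y)$ from $x\notin{\bf B}_-(K_X+B+M)$ is not automatic without the crepant pullback relation. Third, even the final transfer back to $X$ fails: from $K_Y+B_Y+M_Y\sim_\Q N_\sigma(K_Y+B_Y+M_Y)$ you cannot conclude $K_X+B+M\sim_\Q N_\sigma(K_X+B+M)$ by merely ``pushing through the birational $X\to Y$''.

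The paper's proof addresses exactly this difficulty by a substantial reduction (its Steps~1--4) \emph{before} descending to the base. One first takes the ample model $U$ of $(X,B+M)$ over $Y$ using Lemma~\ref{l-lc-model-bir}, which gives explicit control of the discrepancy $R_U\ge 0$ and shows $U$ is of Fano type over $Z$; then runs an MMP over $Z$ to a minimal model $V$; then builds a $\Q$-factorial generalised dlt model $(W,\Delta_W+M_W)$ of $(V,B_V+M_V)$ while carefully tracking the non-klt centre $x$ (via Lemma~\ref{l-lifting-0dim-lcc}) and verifying that the conditions $x'\notin{\bf B}_-$ and $\tau_{E'}=0$ survive (via Lemmas~\ref{lem-tau-for-non-canonical-places} and~\ref{l-tau-B-to-Delta}). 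Only after replacing $(X,B+M)$ by $(W,\Delta_W+M_W)$ and $Z$ by the base $T$ of the Iitaka-type contraction of $V$ does one have $X$ itself of Fano type over $Z$ with $K_X+B+M\sim_\Q 0/Z$, at which point Lemmas~\ref{l-fibration} and~\ref{l-fibration-2} apply directly to $X\to Z$ and your Steps~5--6 style argument goes through. This reduction is the missing idea in your proposal.
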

\begin{proof}
Note that $\kappa_\sigma$ is defined as in [\ref{Nakayama}, Definition 6.2.7].\\

\emph{Step 1.} 
In this step we consider an ample model of $(X,B+M)$ over $Y$ from which we derive a certain minimal 
model over $Z$. Since $Y$ is of Fano type 
over $Z$, $(Y,C)$ is klt and $K_Y+C\sim_{\Q,Z} 0$ for some $C$. Thus 
by Lemma \ref{l-lc-model-bir}, $(X,B+M)$ has an ample model over $Y$, say $U$.
Denoting the morphism $U\to Y$ by $\pi$, we can write 
$$
K_U+B_U+M_U+R_U=\pi^*(K_Y+B_Y+M_Y)
$$ 
for some $R_U$ exceptional over $Y$. Then $-R_U$ is ample over $Y$, so $R_U\ge 0$ by the negativity lemma.
Moreover, $\Supp R_U$ contains every exceptional divisor of $\pi$. Therefore, $U$ is of Fano type over $Z$ 
as $Y$ is of Fano type over $Z$, by the relative version of [\ref{B-compl}, 2.13(7)].
In particular, $(U,B_U+M_U)$ has a minimal model over $Z$, say $V$. 

Note that since $K_U+B_U+M_U$ is pseudo-effective and 
$$
K_U+B_U+M_U+R_U\sim_{\Q,Z} 0,
$$  
$R_U$ is vertical over $Z$ and 
$$
K_U+B_U+M_U\sim_\Q 0
$$ 
over the generic point of $Z$.\\

\emph{Step 2.}
In this step, after replacing $X'$, we define a boundary on $X'$ and find a generalised 
non-klt centre mapping to $x$ and study its properties. Recall that $x\in X$ is a zero-dimensional generalised non-klt centre of $(X,B + M)$ as in the statement of Theorem \ref{t-mirror-symmetry-generalised}.
Replacing $X'$ we can assume that it is a log resolution of $(X,B)$ and that the induced map $\rho\colon X'\bir V$ 
is a morphism and a log resolution of $(V,B_V)$. Write 
$$
K_{X'}+B'+M'=\phi^*(K_X+B+M).
$$
Let $\Delta'$ be the sum of the birational transform of $B_V$ and the reduced exceptional 
divisor of $\rho$. Then $R':=\Delta'-B'$ is exceptional over $V$ as $\rho_*\Delta'=B_V=\rho_*B'$. 
Moreover, $R'\ge 0$: indeed, for any prime divisor $D'$ exceptional over $V$, we have 
$$
\mu_{D'}(\Delta'-B')=1-\mu_{D'}B'\ge 0.
$$ 

By Lemma \ref{l-lifting-0dim-lcc}, there is a zero-dimensional 
generalised non-klt centre $x'$ of $(X',B'+M')$ mapping to the given point $x$. 
Then $x'$ is also a generalised non-klt centre of $(X',\Delta'+M')$, and $x'\notin \Supp R'$ as $(X',\Delta'+M')$ 
is generalised lc. On the other hand, we claim that
$$
x'\notin {\bf B}_-(K_{X'}+\Delta'+M').
$$ 
This follows from
$$
{\bf B}_-(K_{X'}+\Delta'+M')\subseteq {\bf B}_-(K_{X'}+B'+M')\cup \Supp R'
$$
$$
\subseteq \phi^{-1}{\bf B}_-(K_{X}+B+M)\cup \Supp R'
$$
and the assumption 
$$
x\notin {\bf B}_-(K_{X}+B+M)
$$ 
and the fact $x'\notin \Supp R'$.\\ 

\emph{Step 3.}
In this step we construct a minimal model of $({X'},\Delta'+M')$ over $V$ 
and study its properties.
By construction, 
$$
K_{X'}+\Delta'+M'=\rho^*(K_V+B_V+M_V)+P'
$$
where $P'\ge 0$ is exceptional over $V$. 
Thus running an MMP on $K_{X'}+\Delta'+M'$ over $V$ with scaling of some ample 
divisor contracts $P'$ and ends with a minimal model $W/V$ because $P'$ cannot be a 
limit of movable$/V$ divisors as it is exceptional over $V$. In fact, $(W,\Delta_W+M_W)$ 
is a $\Q$-factorial generalised dlt model of $(V,B_V+M_V)$ where $\Delta_W+M_W$ is the pushdown of $\Delta'+M'$. 
The map $X'\bir W$ is an isomorphism near $x'$ because $x'$ is not contained in ${\bf B}_-(K_{X'}+\Delta'+M')$. 

Let $E$ (resp. $E'$) be the exceptional divisor of 
the blowup of $X$ at $x$ (resp. of $X'$ at $x'$). Then 
$$
a(E,X,B+M)=0=a(E',X',B'+M')=a(E',X,B+M).
$$
This implies 
$$
a(E',V,B_V+M_V)=a(E',W,\Delta_W+M_W)=a(E',X',\Delta'+M')=0
$$ 
by the previous paragraph.

Now from the assumption 
$$
\tau_E(K_X+B+M)=0
$$
we deduce that 
$$
\tau_{E'}(K_X+B+M)=0,
$$
by Lemma \ref{lem-tau-for-non-canonical-places}. 
This in turn implies 
$$
\tau_{E'}(K_V+B_V+M_V)=0
$$
because by the construction of $V$, 
$$
\phi^*(K_X+B+M)-\rho^*(K_V+B_V+M_V)\ge 0.
$$
Thus 
$$
\tau_{E'}(K_{W}+\Delta_W+M_W)=0.
$$\

\emph{Step 4.}
In this step we replace $X,Y,Z$ so that we can assume that 
$K_X+B+M\sim_{\Q,Z} 0$ and that $X$ is of Fano type over $Z$.
By Step 1, $K_V+B_V+M_V$ is nef over $Z$ and $V$ is of Fano type over $Z$ as $U$ is of Fano 
type over $Z$. Then $K_V+B_V+M_V$ is semi-ample over $Z$ by the Fano type property (see \ref{ss-Fano}).
Let $V\to T/Z$ be the contraction defined by $K_V+B_V+M_V$. Since $K_V+B_V+M_V\sim_\Q 0$ 
over the generic point of $Z$ by Step 1, $T\to Z$ is birational but $V\to T$ is not birational as we assumed $Y\to Z$ is not birational.
By construction,
$$
\kappa_\sigma(K_X+B+M)=\kappa_\sigma(K_V+B_V+M_V)=\kappa_\sigma(K_W+\Delta_W+M_W)
$$
and 
$$
\kappa(K_X+B+M)=\kappa(K_V+B_V+M_V)=\kappa(K_W+\Delta_W+M_W).
$$
Moreover, $W$ is of Fano type over $Z$ as $V$ is of Fano type over $Z$ (by the relative 
version of [\ref{B-compl}, 2.13(7)]), so $W$ is also of 
Fano type over $T$.
Therefore, replacing $Z$ with $T$, replacing $(X,B+M)$ with $(W,\Delta_W+M_W)$, 
replacing $x$ with the image of $x'$ in $W$, and 
replacing $Y$ with $V$,  from now on we can assume that $K_X+B+M\sim_{\Q,Z} 0$ and that 
$X$ is of Fano type over $Z$. In particular, there is $Q$ such that $(X,Q)$ is klt and 
$K_X+Q\sim_{\Q,Z} 0$. From this we can get $Q_Z$ so that $(Z,Q_Z)$ is klt in case $\dim Z>0$ 
(for this we can 
either apply [\ref{Ambro-lc-trivial}, Theorem 0.2] or use adjunction to get a generalised klt strucutre on $Z$ from which 
we can derive a klt boundary).\\ 

\emph{Step 5.}
In this step we settle the case $\dim Z=0$, and prepare for induction in the case $\dim Z>0$.
If $\dim Z=0$, then $K_X+B+M\sim_\Q 0$ by the previous step, 
so the lemma holds in this case. From now on we assume 
that $\dim Z>0$. Denote $X\to Z$ by $f$ and let $z:=f(x)$. For each birational contraction 
$Z'\to Z$ from a normal variety, consider $(Z',B_{Z'}+M_{Z'})$ given by adjunction as in \ref{ss-Adjunction-gen-pair}. 
Then  by Lemma \ref{l-fibration}, there exist a high resolution $\psi\colon Z'\to Z$ and a closed point 
$z'\in Z'$ mapping to $z$ such that 
\begin{itemize}
\item $z'$ is a generalised non-klt centre of $(Z',B_{Z'}+M_{Z'})$, so 
$$
a(E',Z,B_Z+M_Z)=a(E',Z',B_{Z'}+M_{Z'})=0,
$$
\item and 
$$
\tau_{G'}(K_{Z}+B_{Z}+M_{Z})=0
$$ 
where $G'$ is the exceptional divisor of the blowup of $Z'$ at $z'$.
\end{itemize}\

\emph{Step 6.}
In this step we finish the proof.
Let $\Delta_{Z'}:=B_{Z'}^{\ge 0}$ and $P_{Z'}=\Delta_{Z'}-B_{Z'}$. Then $z'$ is a generalised 
non-klt centre of $(Z',\Delta_{Z'}+M_{Z'})$ and $z'\notin \Supp P_{Z'}$ because $(Z',B_{Z'})$ 
is log smooth so only $\dim Z$ exceptional divisors can meet at $z'$, and all of these have 
coefficient $1$ in $B_{Z'}$. 
Moreover, by Lemma \ref{l-tau-B-to-Delta} applied to $(Z,B_Z+M_Z)$ we have  
$$
\tau_{G'}(K_{Z'}+\Delta_{Z'}+M_{Z'})=0.
$$ 
Moreover, by Lemma \ref{l-fibration-2}, 
$$
z\notin {\bf B}_-(K_Z+B_Z+M_Z),
$$
hence 
$$
z'\notin {\bf B}_-(K_{Z'}+\Delta_{Z'}+M_{Z'})\subseteq \psi^{-1} {\bf B}_-(K_Z+B_Z+M_Z)\cup \Supp P_{Z'}.
$$
Thus $(Z',\Delta_{Z'}+M_{Z'})$ satisfies the properties listed in Theorem \ref{t-mirror-symmetry-generalised}. 
Since we are assuming the theorem in dimension $\le d-1$, $(Z',\Delta_{Z'}+M_{Z'})$ has a minimal model 
which is generalised log Calabi--Yau. Therefore, 
$$
\kappa_\sigma(K_{Z'}+\Delta_{Z'}+M_{Z'})=\kappa(K_{Z'}+\Delta_{Z'}+M_{Z'})=0,
$$
so we have 
$$
\kappa_\sigma(K_{Z}+B_{Z}+M_{Z})=\kappa(K_{Z}+B_{Z}+M_{Z})=0.
$$
These in turn imply that  
$$
\kappa_\sigma(K_X+B+M)=\kappa(K_X+B+M)=0
$$
as desired, by [\ref{Nakayama}, Proposition 6.2.8].

\end{proof}

\subsection{Proofs of Theorems \ref{t-mirror-symmetry} and \ref{t-mirror-symmetry-generalised}}

\begin{proof}(of Theorem \ref{t-mirror-symmetry-generalised})
We will apply induction on dimension, so assume the theorem holds in lower dimension. 
The case $\dim =1$ is easy to verify.\\

\emph{Step 1.} 
In this step we prove the theorem assuming that we have  
$$
(*) \ \ \ \kappa_\sigma(K_X+B+M)=\kappa(K_X+B+M)=0.
$$
From this we get
$$
K_X+B+M \equiv N_\sigma(K_X+B+M)\ge 0,
$$
by [\ref{Nakayama}, Proposition 6.2.8], where $N_\sigma$ is defined as in [\ref{Nakayama}, Definition 2.1.8]. 
Run an MMP on $K_X+B+M$ with scaling of some ample divisor $A$. 
We show that the MMP terminates (the case $M=0$ was established in [\ref{Gongyo}] where 
more details can be found). 
Let $\lambda_i$ be the 
scaling numbers and $X_i\bir X_{i+1}$ the steps of the MMP. Then $\lim \lambda_i=0$ 
by \ref{ss-MMP}. Moreover, 
$$
K_{X_i}+B_i+M_i+\lambda_iA_i
$$ 
is semi-ample, so any divisor not contracted by $X_1\bir X_i$ is not a component of 
$$
{\bf{B}}(K_X+B+M+\lambda_iA).
$$ 
Thus any prime divisor not contracted by the MMP is not a component of the restricted base locus 
 ${\bf{B}}_{-}(K_X+B+M)$. 
On the other hand, by definition of $N_\sigma$, 
$$
\Supp N_\sigma(K_X+B+M)
$$ 
is contained in ${\bf{B}}_{-}(K_X+B+M)$. Therefore, $N_\sigma(K_X+B+M)$ is contracted by the MMP. 
This ensures  
$$
K_{X_i}+B_i+M_i\equiv 0
$$ 
for $i\gg 0$. 
Since 
$$
\kappa(K_X+B+M)=0,
$$
we deduce that 
$$
K_{X_i}+B_i+M_i\sim_\Q 0,
$$ 
for $i\gg 0$. Thus the MMP ends with a good minimal model as required.
It is then enough to show that $(X,B+M)$ satisfies $(*)$.\\

\emph{Step 2.}  
In this step we modify $(X,B+M)$ so that we can assume 
$$
\tau_S(K_X+B+M)=0
$$ 
for some component $S$ of $\rddown{B}$.
Let $Y\to X$ be the blowup of $X$ at $x$ with exceptional divisor $E$. 
We can assume that the induced map $X'\bir Y$ is a morphism and that $\phi\colon X'\to X$ 
is a log resolution of $(X,B)$. Writing 
$$
K_{X'}+B'+M'=\phi^*(K_X+B+M),
$$
let $K_Y+B_Y+M_Y$ be the pushdown of $K_{X'}+B'+M'$.
We consider $(Y,B_Y+M_Y)$ as a generalised pair with data $X'\to Y$ and $M'$. 
There is a zero-dimensional generalised non-klt centre $y$ of $(Y,B_Y+M_Y)$ mapping to $x$ 
because there is a zero-dimensional generalised non-klt centre $x'$ of 
$(X',B'+M')$ mapping to $x$, by Lemma \ref{l-lifting-0dim-lcc}, and we can take $y$ to be the 
image of $x'$ on $Y$; alternatively 
we can find $y$ using the fact that $Y\to X$ is the blowup of a smooth point $x$ 
and that $(X,B)$ is log smooth near $x$; this in particular shows that 
$(Y,B_Y+M_Y)$ is $\Q$-factorial generalised dlt, so $(Y,B_Y)$ 
is log smooth near $y$. 

Let $G$ be the exceptional divisor of the blowup of $Y$ at $y$. Then 
$$
a(E,X,B+M)=0=a(G,Y,B_Y+M_Y)=a(G,X,B+M).
$$
Also by assumption,
$$
\tau_{E}(K_Y+B_Y+M_Y)=\tau_{E}(K_X+B+M)=0.
$$
Applying Lemma \ref{lem-tau-for-non-canonical-places}, we see that 
$$
\tau_{G}(K_{Y}+B_Y+M_Y)=\tau_{G}(K_{X}+B+M)=0.
$$

Now $(Y,B_Y+M_Y),y$ satisfies the properties (1)-(3), (5) listed in Theorem \ref{t-mirror-symmetry-generalised}. 
It also satisfies (4), that is,
$$
y\notin {\bf B}_-(K_{Y}+B_Y+M_Y)
$$ 
as 
$$
{\bf B}_-(K_{Y}+B_Y+M_Y)\subseteq \alpha^{-1} {\bf B}_-(K_{X}+B+M)
$$
where $\alpha$ denotes $Y\to X$.
Therefore, replacing $(X,B+M),x$ with $(Y,B_Y+M_Y),y$, we can assume that 
$$
\tau_S(K_X+B+M)=0
$$ 
for some component $S$ of $\rddown{B}$.\\

\emph{Step 3.}
In this step we choose a number $t$ and find a Mori fibre space for $({X},B+M-t\rddown{B})$.
Let $\Phi$ be the set of the coefficients of $B$ and $p$ be a natural number so that 
$pM'$ is Cartier. 
Pick a small rational number $t>0$ as in Lemma \ref{l-application-of-local-global-ACC} 
for the data $\dim X,p,\Phi$. By the previous step,  
$$
K_{X}+B+M-t\rddown{B}
$$ 
is not pseudo-effective. Thus we can run an MMP on 
$$
K_{X}+B+M-t\rddown{B}
$$ 
ending with a Mori fibre space $V\to Z$. 
Since $K_X+B+M$ is pseudo-effective, $K_V+B_V+M_V$ is nef over $Z$.

Writing 
$$
B_V-t\rddown{B_V}+M_V=B_V-\rddown{B_V}+(1-t)\rddown{B_V}+M_V,
$$ 
and applying Lemma \ref{l-application-of-local-global-ACC}, we deduce that 
$(V,B_V+M_V)$ is generalised lc and 
$$
K_V+B_V+M_V\equiv_Z 0.
$$\ 

\emph{Step 4.}
In this step we introduce a boundary $\Delta'$ on $X'$ and study its properties.
Replacing $X'$ we can assume that $\phi$ is a log resolution of $(X,B)$ and that 
the induced map $X'\bir V$ is a morphism. Recall 
$$
K_{X'}+B'+M'=\phi^*(K_X+B+M)
$$ 
from Step 2. Let $\Delta':=B'^{\ge 0}$. Applying Lemma \ref{l-lifting-0dim-lcc}, 
we see that there is a zero-dimensional generalised non-klt centre 
$x'$ of $(X',B'+M')$ mapping to $x$. Note that $x'$ is also a generalised non-klt centre 
of $(X',\Delta'+M')$ and $x'\notin \Supp (\Delta'-B')$. 
Moreover, if $G'$ is the exceptional divisor of the blowup of $X'$ at $x'$, then 
by Lemma \ref{lem-tau-for-non-canonical-places}, 
$$
\tau_{G'}(K_{X}+B+M)=0,
$$
so 
$$
\tau_{G'}(K_{X'}+\Delta'+M')=0
$$
by Lemma \ref{l-tau-B-to-Delta}. 

In addition, 
$$
x'\notin {\bf B}_-(K_{X'}+\Delta'+M')
$$ 
because 
$$
{\bf B}_-(K_{X'}+\Delta'+M')\subseteq \phi^{-1}{\bf B}_-(K_{X}+B+M)\cup \Supp (\Delta'-B')
$$
and $x\notin {\bf B}_-(K_{X}+B+M)$ and $x'\notin \Supp (\Delta'-B')$.\\

\emph{Step 5.}
In this step we finish the proof. 
By the previous step, we can replace $(X,B+M),x$ with $(X',\Delta'+M'),x'$, hence assume that there is a 
birational contraction $X\to V$ and a non-birational contraction $V\to Z$ such that 
\begin{itemize}
\item $(V,B_V+M_V)$ is generalised lc with nef part $M'$, where $K_V+B_V+M_V$ denotes the pushdown of $K_X+B+M$,

\item $V$ is of Fano type over $Z$, and 

\item $K_V+B_V+M_V\sim_{\Q,Z} 0$. 
\end{itemize}
Now since we are assuming Theorem \ref{t-mirror-symmetry-generalised} in lower 
dimension, applying Lemma \ref{l-mirror-symmetry-generalised-induction}, we get 
$$
 \kappa_\sigma(K_X+B+M)=\kappa(K_X+B+M)=0
$$
as desired.

\end{proof}

\begin{proof}(of Theorem \ref{t-mirror-symmetry})
This is a special case of Theorem \ref{t-mirror-symmetry-generalised}.

\end{proof}

\subsection{Proofs of Corollaries \ref{cor-mirror-symmetry-connectedness} and \ref{cor-mirror-symmetry}}

Before giving the proofs of the corollaries we make a bit of preparation.

\begin{lem}\label{l-cy-connected}
Let $(X,B)$ be a projective  dlt pair with $K_X+B\equiv 0$ having a zero-dimensional 
non-klt centre $x$. Assume that either $V=X$ or $V$ is a non-klt centre of $(X,B)$, and that  
$\dim V\ge 2$. Let $K_V+B_V=(K_X+B)|_V$ be given by adjunction ($B_V=B$ when $V=X$). Then the non-klt locus 
$\Nklt(V,B_V)=\rddown{B_V}$ is connected.
\end{lem}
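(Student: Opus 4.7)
The plan is to apply Theorem \ref{t-anti-nef-pairs-1} to $(V,B_V)$ and derive a contradiction from the presence of a zero-dimensional lc centre inside $V$. First I note that by dlt adjunction $(V,B_V)$ is itself dlt with $K_V+B_V\equiv 0$, so $\Nklt(V,B_V)=\rddown{B_V}$ and $-(K_V+B_V)$ is nef over a point.

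To produce a zero-dimensional lc centre of $(V,B_V)$: if $V=X$ take $x_V:=x$; if $V$ is a proper non-klt centre of $(X,B)$ I will argue that $V$ contains a zero-dimensional lc centre of $(X,B)$, which is then automatically a zero-dimensional lc centre of $(V,B_V)$. This is the statement that the dual complex of a dlt log Calabi--Yau pair with a zero-dimensional lc centre is a pseudo-manifold of dimension $\dim X-1$; equivalently, every lc centre contains a zero-dimensional lc centre. I would prove this by induction on the codimension of $V$, using dlt adjunction to pass from $(X,B)$ to $(C,B_C)$ for a component $C$ of $\rddown{B}$ containing $V$, combined with the $V=X$ conclusion of the present lemma at lower dimensions as inductive hypothesis.

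Having produced such $x_V\in V$, suppose for contradiction that $\rddown{B_V}$ is disconnected. Applying Theorem \ref{t-anti-nef-pairs-1} to $(V,B_V)$ over a point yields a log resolution $\phi\colon V'\to V$ and a contraction $V'\to Y'$ such that, writing $K_{V'}+B'_V=\phi^*(K_V+B_V)$, the general fibre of $V'\to Y'$ is $(\PP^1,p_1+p_2)$ and $\rddown{B'_V}=S'+T'$ is a sum of exactly two disjoint irreducible components, both horizontal over $Y'$ (the equality rather than just inclusion follows from Step 8 of the proof of Theorem \ref{t-anti-nef-pairs-1-generalised}). By Lemma \ref{l-lifting-0dim-lcc} applied to $(V,B_V)$ with the zero-dimensional lc centre $x_V$, there is a zero-dimensional lc centre $x'_V$ of $(V',B'_V)$ mapping to $x_V$. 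Since $(V',B'_V)$ is log smooth and $x'_V$ is zero-dimensional, exactly $\dim V$ components of $\rddown{B'_V}$ pass through $x'_V$ transversally. But $\rddown{B'_V}=S'+T'$ with $S'\cap T'=\emptyset$, so at most one component passes through $x'_V$, forcing $\dim V\le 1$ and contradicting $\dim V\ge 2$.

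The main obstacle is the intermediate step of producing a zero-dimensional lc centre inside a general proper non-klt centre $V$, i.e.\ the pseudo-manifold/purity property of the dual complex. Once such a centre is in hand, the remainder of the argument combines Theorem \ref{t-anti-nef-pairs-1} with log smoothness of the resolution and the lifting Lemma \ref{l-lifting-0dim-lcc} in an essentially formal way.
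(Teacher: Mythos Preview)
Your approach is essentially the same as the paper's: produce a zero-dimensional lc centre inside $V$, then contradict it using the structure coming from Theorem~\ref{t-anti-nef-pairs-1}. For the first step the paper quotes \cite[Theorem~4.40]{Kollar-singMMP} (all minimal lc centres of a dlt Calabi--Yau pair are birational, hence equidimensional), and immediately after the proof spells out the same chain-and-induction argument you sketch; your version is fine but you should make explicit that the chain from $x$ to a component $C\supset V$ exists because the $V=X$ case of the lemma (connectedness of $\rddown{B}$) has already been established in the current dimension.

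The only real difference is in the contradiction. You pass to the resolution $V'$, invoke Lemma~\ref{l-lifting-0dim-lcc}, and count components of $\rddown{B'_V}$. The paper stays on $V$: Theorem~\ref{t-anti-nef-pairs-1-generalised} (Step~7) actually shows $(V,B_V)$ is \emph{plt} with exactly two non-klt places, so the two connected components of $\Nklt(V,B_V)$ are the only non-klt centres of $(V,B_V)$, and these are prime divisors of dimension $\dim V-1\ge 1$; the existence of a zero-dimensional lc centre is then an immediate contradiction. Your route works, but the paper's is shorter since it avoids the detour through $V'$ and Lemma~\ref{l-lifting-0dim-lcc}.
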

\begin{proof}
The equality $\Nklt(V,B_V)=\rddown{B_V}$ follows from the assumption that $(X,B)$ is dlt 
which implies that $(V,B_V)$ is dlt.
Assume  that $\Nklt(V,B_V)$ is not connected.
By [\ref{Kollar-singMMP}, Theorem 4.40], all the minimal non-klt centres of $(X,B)$ are birational 
to each other, in particular, they have the same dimension. Since we already have  
a zero-dimensional non-klt centre $x$, all the minimal non-klt centres are zero-dimensional 
(we recall the proof of this fact below). 
Therefore, any minimal non-klt centre of $(V,B_V)$ is also zero-dimensional because any minimal 
non-klt centre of $(V,B_V)$ is a minimal non-klt centre of $(X,B)$.

Since we assumed that $\Nklt(V,B_V)=\rddown{B_V}$  is not connected, by Theorem \ref{t-anti-nef-pairs-1}, 
$(V,B_V)$ has exactly two disjoint non-klt centres. As $(V,B_V)$ is dlt, these centres 
are two disjoint irreducible components of $\rddown{B_V}$. This contradicts the fact that 
$(V,B_V)$ has a zero-dimensional non-klt centre. 

\end{proof}

The lemma does not hold if we relax the dlt assumption to lc; see the example 
given after Theorem \ref{t-anti-nef-pairs-1}.

In the proof of the lemma we used the fact that all the minimal non-klt centres of $(X,B)$ 
are zero-dimensional. For convenience, we give the proof here essentially following 
[\ref{Kollar-singMMP}, Theorem 4.40]. First applying Theorem \ref{t-anti-nef-pairs-1}, 
we see that $\rddown{B}$ is connected because $(X,B)$ has a zero-dimensional non-klt centre $x$. 
Now pick any minimal non-klt centre $W$ of 
$(X,B)$. Then there exist components $D_1,\dots,D_r$ of $\rddown{B}$ such that 
$x\in D_1$, $W\subset D_r$, $D_i$ intersects $D_{i+1}$ for each $1\le i<r$, and that $r$ 
is minimal with these properties. Then by induction on dimension, any minimal non-klt centre of 
$(D_1,B_{D_1})$ is zero-dimensional where $K_{D_i}+B_{D_i}=(K_X+B)|_{D_i}$. 
In particular, $D_1\cap D_2$ contains a zero-dimensional non-klt centre $x_2$ of $(D_1,B_{D_1})$
which is in turn a non-klt centre of $(X,B)$. Repeating this argument we find a zero-dimensional 
non-klt centre of $(X,B)$ in each $D_i$, in particular, in $D_r$. 
But $W\subset D_r$, so applying induction to $(D_r,B_{D_r})$ implies that 
$\dim W=0$. 

\

\begin{proof}(of Corollary \ref{cor-mirror-symmetry-connectedness})
By Theorem \ref{t-mirror-symmetry}, we can run an MMP on $K_X+B$ ending with a minimal model 
$X'$ with $K_{X'}+B'\sim_\Q 0$. By assumption, no non-klt centre of $(X,B)$ is contained in ${\bf B}_-(K_{X}+B)$.
Thus $X\bir X'$ is an isomorphism near the generic point 
of each non-klt centre. So each non-klt centre has a birational transform on $X'$. 
On the other hand, if $W'$ is any non-klt centre of $(X',B')$, then $X\bir X'$ is an isomorphism 
over the generic point of $W'$ because $X\bir X'$ is an MMP on $K_X+B$.  
Thus there is a 1-1 correspondence between the non-klt centres of $(X,B)$ and $(X',B')$ 
given by birational transform. In particular, the image $x'$ on $X'$ of the given non-klt centre 
$x$ is a zero-dimensional non-klt centre of $(X',B')$. 

Assume that either $V=X$ or that $V$ is a non-klt centre of $(X,B)$.  
Define $K_V+B_V=(K_X+B)|_V$ by adjunction ($B_V=B$ if $V=X$). 
Since $(X,B)$ is dlt, $(V,B_V)$ is dlt, so 
$$
\Nklt(V,B_V)=\rddown{B_V}.
$$ 
Similarly, define $K_{V'}+B_{V'}=(K_{X'}+B')|_{V'}$ by adjunction.
Then 
$$
\Nklt(V',B_{V'})=\rddown{B_{V'}}.
$$ 
Each non-klt centre of $(V,B_V)$ is a non-klt centre of $(X,B)$, and similarly, 
each non-klt centre of $(V',B_{V'})$ is a non-klt centre of $(X',B')$.
Thus by the previous paragraph, $\rddown{B_{V'}}$ is the birational transform of $\rddown{B_{V}}$.

Now assume that $\dim V\ge 2$. Since $(X',B')$ is dlt with $K_{X'}+B'\sim_\Q 0$ having a 
zero-dimensional non-klt centre $x'$, applying Lemma \ref{l-cy-connected} shows that 
$\Nklt(V',B_{V'})=\rddown{B_{V'}}$ is connected.

Assume that $\Nklt(V,B_V)=\rddown{B_V}$ is not connected. 
We will derive a contradiction. 
Since $\rddown{B_V}$ is not connected but $\rddown{B_{V'}}$ is connected, 
we can find disjoint components $S,T$ of $\rddown{B_V}$ such that their birational 
transforms $S',T'$ on $X'$ intersect. Let $W'$ be an irreducible component of $S'\cap T'$. 
Then $S',T'$ are components of $\rddown{B_{V'}}$, hence 
$W'$ is a non-klt centre of $(V',B_{V'})$, so $W'$ is also a non-klt centre of $(X',B')$.
Thus $X\bir X'$ is an isomorphism over the generic point of $W'$, hence $W'$ is the birational 
transform of a non-klt centre $W$ of $(X,B)$, and $S,T$ both contain $W$, a contradiction. Therefore, 
$\rddown{B_{V}}$ is connected.

\end{proof}

\begin{proof}(of Corollary \ref{cor-mirror-symmetry})
For each $i$, let $0\le u_i\le a_i$ be a rational number such that $u_i\le 1$.
Let $\Delta=B-\sum u_iB_i$.
Then 
$$
K_X+\Delta=K_X+B-\sum u_iB_i\equiv \sum (a_i-u_i)B_i\ge 0.
$$
In particular, $K_X+\Delta$ is pseudo-effective and 
$$
{\bf B}_-(K_X+\Delta)\subseteq \Supp \sum (a_i-u_i)B_i\subseteq B-C
$$
 where $C$ is the sum of the good components of $B$.
On the other hand, 
by assumption, $C$ and $B-C$ have no common components, 
and $x$ is a zero-dimensional non-klt centre of $(X,C)$. Thus $x$ is not contained in $B-C$ which implies that 
$x\notin {\bf B}_-(K_X+\Delta)$.

Moreover, ${\Delta}\ge C$, so $x$ is a zero-dimensional non-klt centre of $(X,\Delta)$. 
Also 
$$
0\le \tau_E(K_X+\Delta)\le \tau_E(K_X+B)=0
$$
where $E$ is the exceptional divisor of the blow up of $X$ at $x$.
Therefore, $(X,\Delta)$ satisfies all the assumptions of Theorem \ref{t-mirror-symmetry}, so  
we can run an MMP on $K_X+\Delta$ which ends with a log minimal model $(X',\Delta')$ with $K_{X'}+\Delta'\sim_\Q 0$. 
In particular, taking $u_i=0$ for every $i$, we have $\Delta=B$, so we get the first claim of the 
corollary.  
   
Assume that $V$ is a stratum of $(X,C)$, that is, 
either $V=X$ or that $V$ is a non-klt centre of $(X,C)$. Assume $\dim V \ge 2$. We want to show that 
$C_V$ is connected where $K_V+C_V=(K_X+C)|_V$. 

From now on we assume that $u_i>0$ when $a_i>0$. Then $\rddown{\Delta}=C$, 
so the non-klt centres of $(X,\Delta)$ and $(X,C)$ are the same. 
So $C_V=\rddown{\Delta_V}$ where  $K_V+\Delta_V=(K_X+\Delta)|_V$. 
Moreover, no non-klt centre of $(X,C)$ is contained in $B-C$, so no non-klt centre of $(X,\Delta)$ 
is contained in $B-C$, hence 
by the first paragraph of this proof, no non-klt centre of $(X,\Delta)$ 
is contained in ${\bf B}_-(K_X+\Delta)$. Therefore, by Corollary \ref{cor-mirror-symmetry-connectedness}, 
$C_V=\rddown{\Delta_V}$ is connected as desired.

\end{proof}


\vspace{2cm}

\small
\textsc{Yau Mathematical Sciences Center} \endgraf
\textsc{JingZhai Building, Tsinghua University} \endgraf
\textsc{ Hai Dian District, Beijing, China 100084  } \endgraf
\vspace{0.5cm}
\email{Email: birkar@tsinghua.edu.cn\\}

\end{document}